\newcommand{\ba}{\begin{array}}
\newcommand{\ea}{\end{array}}
\newtheorem{thm}{Theorem}[section]
\newtheorem{prop}[thm]{Proposition}
\newtheorem{lemma}[thm]{Lemma}
\newtheorem{cor}[thm]{Corollary}
\theoremstyle{definition}
\newtheorem{defn}[thm]{Definition}
\theoremstyle{remark}
\newtheorem{rmk}[thm]{Remark}
\newtheorem{example}[thm]{Example}
\newcommand{\C}{\mathcal{C}}
\newcommand{\Z}{\mathbb{Z}}
\newcommand{\Q}{\mathbb{Q}}
\newcommand{\bdry}{\partial}
\newcommand{\n}{\noindent}
\newcommand{\F}{\mathbb{F}_2}
\newcommand{\zq}{\Z[q^{\pm1}]}
\newcommand{\uq}{\mathbf{U}_q(\mathfrak{sl}(1|1))}
\newcommand{\rn}{R_n}
\newcommand{\rnn}{R_n \otimes R_n}
\newcommand{\drnn}{R_n \boxtimes R_n}
\newcommand{\drr}{R \boxtimes R}
\newcommand{\rr}{R \otimes R}
\newcommand{\ra}{\rightarrow}
\newcommand{\xra}{\xrightarrow}
\newcommand{\g}{\Gamma}
\newcommand{\gn}{\Gamma_n}
\newcommand{\gnn}{\Gamma_n \boxtimes \Gamma_n}
\newcommand{\mf}{\mathbf}
\newcommand{\op}{\operatorname}
\newcommand{\cal}{\mathcal}
\newcommand{\es}{\emptyset}
\newcommand{\lan}{\langle}
\newcommand{\ran}{\rangle}
\newcommand{\ot}{\otimes}
\newcommand{\be}{\begin{enumerate}}
\newcommand{\ee}{\end{enumerate}}
\begin{document}
\title{A categorification of $\uq$ as an algebra}

\author{Yin Tian}
\address{University of Southern California, Los Angeles, CA 90089}
\email{yintian@usc.edu}

\keywords{}

\begin{abstract}
We construct families of differential graded algebras $\rn$ and $\drnn$ for $n>0$ and give an algebraic formulation of the contact category of a disk through the differential graded category $DGP(\rn)$ consisting of finitely iterated mapping cones of maps between some distinguished projective differential graded $\rn$-modules. The $0$-th homology category $H^0(DGP(\rn))$ of $DGP(\rn)$ is a triangulated category and its Grothendieck group $K_0(\rn)$ is a Clifford algebra. We then categorify the multiplication on $K_0(\rn)$ to a functor $DGP(\drnn) \ra DGP(\rn)$. We construct a subcategory of $H^0(DGP(\rn))$ which categorifies an integral version of $\uq$ as an algebra.  	
\end{abstract}
\maketitle

\section{Introduction}
Categorification is a process in which we lift an integer to a vector space, a vector space to a category, and a linear map between vector spaces to a functor between categories.
Two of the poineering works are Khovanov homology defined by Khovanov \cite{Kh1} and knot Floer homology, defined independently by Ozsv\'ath-Szab\'o \cite{OS} and Rasmussen \cite{Ra}, which respectively categorify the Jones and Alexander polynomials respectively.
Khovanov homology and knot Floer homology are finer invariants of knots which take values in the homotopy category of chain complexes of graded vector spaces whose graded Euler characteristics agree with the polynomial invariants.

The Jones polynomial fits in the general representation-theoretic framework of Reshetikhin-Turaev invariants \cite{RT} which applies to any quantum group $\mathbf{U}_q(\mathfrak{g})$ of a semisimple Lie algebra $\mathfrak{g}$ and any finite dimensional representation of $\mathbf{U}_q(\mathfrak{g})$.
In particular, the Jones polynomial can be recovered from the fundamental representation $V_1$ of $\mathbf{U}_q(\mathfrak{sl}_2)$.
With an eye towards categorifying the Reshetikhin-Turaev invariants, Bernstein-Frenkel-Khovanov \cite{BFK} formulated a program for categorifying representations of $\mathbf{U}_q(\mathfrak{sl}_2)$.
The symmetric powers $V_1^{\otimes n}$ of $\mathbf{U}(\mathfrak{sl}_2)$ were categorified in \cite{BFK} and extended to the graded case of $\mathbf{U}_q(\mathfrak{sl}_2)$ by Stroppel \cite{Str}.
Other tensor product representations of $\mathbf{U}_q(\mathfrak{sl}_2)$ were categorified by Frenkel-Khovanov-Stroppel \cite{FKS}.
Chuang and Rouquier \cite{CR} categorified locally finite $\mathfrak{sl}_2$-representations.
More generally, Rouquier \cite{Rou} constructed a $2$-category associated with a Kac-Moody algebra and studied its $2$-representation.
For the quantum groups themselves, Lauda \cite{La} gave a diagrammatic categorification of $\mathbf{U}_q(\mathfrak{sl}_2)$ and Khovanov-Lauda \cite{KL1, KL3, KL2} extended it to the cases of $\mathbf{U}_q(\mathfrak{sl}_n)$ and one-half of the quantum groups associated to an arbitrary Cartan datum.
Cautis and Lauda \cite{CLa} showed the relationship between $2$-representations of a Kac-Moody Lie algebra and those of categorified quantum groups.
The program of categorifying Reshetikhin-Turaev invariants was brought to fruition by Webster \cite{Web1, Web2} using this diagrammatic approach.

For the Alexander polynomial, Kauffman-Saleur developed a representation-theoretic approach in the spirit of \cite{KS} via the quantum group $\uq$ of the super Lie algebra $\mathfrak{sl}(1|1)$.
Rozansky-Saleur in \cite{RS} gave an associated quantum field theory description.
It is therefore natural to ask whether there is a categorical program for $\uq$ and its fundamental representation which could recover knot Floer homology.
The first step in such a program is to categorify $\uq$.
Motivated by the strands algebra of Lipshitz-Ozsv\'ath-Thurston \cite{LOT}, Khovanov \cite{Kh2} categorified the positive part of $\mathbf{U}_q (\mathfrak{gl}(1|2))$.
In related work, Douglas-Manolescu \cite{DM} generalized the strands algebra associated to a surface to a differential 2-algebra associated to a circle.
Sartori \cite{Sa} has recently announced a categorification of tensor products of the fundamental representation of $\mathfrak{gl}(1|1)$ using completely different methods.

The goal of this paper is to present a categorification of the algebra structure of an integral version of $\uq$.
\begin{defn}
The quantum group $\uq$ is the unital associative $\Q(q)$-algebra with generators $E, F, H, H^{-1}$ and relations:
\begin{gather*}
HH^{-1}= H^{-1}H =1, \\
E^2=F^2=0, \\
HE=EH, HF=FH, \\
EF+FE=\frac{H - H^{-1}}{q- q^{-1}}.
\end{gather*}
\end{defn}
We consider two variants of $\uq$: the {\em idempotent completion} $\mf{U}$ and the {\em integral form} $\mf{U}_n$ of $\mf{U}$.
The idempotent completion $\mf{U}$ is obtained from $\uq$ by replacing the unit by a collection of orthogonal idempotents $1_n$ for $n \in \Z$ such that
$$1_n1_m=\delta_{n,m}, \hspace{.2cm} H1_n=1_nH=q^n1_n,\hspace{.2cm} 1_nE=E1_n, \hspace{.2cm} 1_nF=F1_n.$$

\begin{defn} \label{U_n}
The integral form $\mf{U}_n$ is the unital associative $\Z[q^{\pm1}]$-algebra with generators $E, F$ and relations:
\begin{equation*}
E^2=F^2=0, \hspace{.3cm}
EF+FE=\frac{q^{n} - q^{-n}}{q- q^{-1}}=q^{n-1}+\cdots+q^{1-n}.
\end{equation*}
\end{defn}

The algebra structure of $\mf{U}_n$ can be written as a $\Z[q^{\pm1}]$-linear map $\op{m}_n: \mf{U}_n \otimes_{\Z[q^{\pm1}]} \mf{U}_n \ra \mf{U}_n$.

Our main theorem is a categorification of $\op{m}_n$ via triangulated categories for $n>0$.

\begin{thm} [Main theorem] \label{main}
There exist triangulated categories $\mathcal{U}_{n,n}$ and $\mathcal{U}_{n}$ whose Grothendieck groups are $\mf{U}_n \otimes \mf{U}_n$ and $\mf{U}_n$ respectively for $n>0$.
There exists an exact functor $\mathcal{F}_n: \mathcal{U}_{n,n} \rightarrow \mathcal{U}_{n}$ whose induced map on the Grothendieck groups
$K_0(\mathcal{F}_n): K_0(\mathcal{U}_{n,n}) \ra K_0(\mathcal{U}_n)$ agrees with the multiplication map $\op{m}_n: \mf{U}_n \otimes \mf{U}_n \ra \mf{U}_n$.
\end{thm}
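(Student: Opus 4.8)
The plan is to realize $\mathcal{U}_{n,n}$, $\mathcal{U}_n$, and $\mathcal{F}_n$ inside the homotopy categories of differential graded (DG) modules over $\rn$ and $\drnn$, following the contact-category picture above. First I would fix $\rn$, the DG algebra encoding the contact category of a disk, built as in the strands-algebra constructions from arcs in the disk; besides its homological differential grading it carries an internal $\Z$-grading that provides the $q$-action on Grothendieck groups. Attached to $\rn$ are its distinguished projective DG modules $P_a$, one for each dividing set $a$. Let $DGP(\rn)$ be the full DG subcategory of DG $\rn$-modules whose objects are the finitely iterated mapping cones of closed degree-$0$ morphisms between internal- and homological-grading shifts of the $P_a$; this is a pretriangulated DG category, so $H^0(DGP(\rn))$ is triangulated, with shift the homological shift and distinguished triangles the mapping-cone triangles, and the same recipe for $\drnn$ gives $H^0(DGP(\drnn))$. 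I then take $\mathcal{U}_n \subseteq H^0(DGP(\rn))$ to be the full triangulated subcategory generated by two distinguished objects $\mf{E}, \mf{F}$ whose classes are to represent $E, F$, and $\mathcal{U}_{n,n} \subseteq H^0(DGP(\drnn))$ the full triangulated subcategory generated by $\mf{E}\boxtimes\mf{1}$, $\mf{F}\boxtimes\mf{1}$, $\mf{1}\boxtimes\mf{E}$, $\mf{1}\boxtimes\mf{F}$, where $\mf{1}$ is the distinguished projective representing the unit; a full triangulated subcategory of a triangulated category is triangulated.

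Next I would compute the Grothendieck groups. Since every object of $DGP(\rn)$ is a finite iterated cone on grading shifts of the $P_a$, $K_0(H^0(DGP(\rn)))$ is generated over $\zq$ by the $[P_a]$ modulo $[\op{Cone}(f)] = [\text{target}\,f] - [\text{source}\,f]$ together with $[Z]=0$ for acyclic $Z$; I would match these with the ``bypass'' exact triangles of the contact category of the disk and recover, as asserted earlier, that $K_0(\rn)$ is the relevant Clifford algebra. The assignment $E \mapsto [\mf{E}]$, $F \mapsto [\mf{F}]$ (and, for $\mathcal{U}_{n,n}$, $E\ot 1 \mapsto [\mf{E}\boxtimes\mf{1}]$, and so on) then defines surjective ring maps $\mf{U}_n \twoheadrightarrow K_0(\mathcal{U}_n)$ and $\mf{U}_n\ot\mf{U}_n \twoheadrightarrow K_0(\mathcal{U}_{n,n})$ --- surjective because the chosen objects generate, and well defined once the defining relations of $\mf{U}_n$ are verified categorically: $E^2=F^2=0$ from contractibility of the corresponding two-fold products, and $EF+FE = q^{n-1}+\cdots+q^{1-n}$ from a distinguished triangle whose third vertex is a direct sum of $n$ internal-grading shifts of $\mf{1}$. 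Injectivity I would obtain by exhibiting an explicit $\zq$-basis of $\mathcal{U}_n$, and of $\mathcal{U}_{n,n}$, realized by pairwise non-isomorphic indecomposable objects, so that $K_0(\mathcal{U}_n)=\mf{U}_n$ and $K_0(\mathcal{U}_{n,n})=\mf{U}_n\ot\mf{U}_n$.

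For the functor, I would take $\mathcal{F}_n = M_n\ot^{\mathbf{L}}_{\drnn}(-)$ for a DG $(\rn,\drnn)$-bimodule $M_n$ encoding the gluing of two copies of the disk along a boundary arc. One checks that $M_n\ot^{\mathbf{L}}_{\drnn}(-)$ carries each distinguished projective of $\drnn$ into $DGP(\rn)$, hence restricts to a DG functor $DGP(\drnn)\ra DGP(\rn)$ and descends to an exact functor $H^0(DGP(\drnn))\ra H^0(DGP(\rn))$ sending the generators of $\mathcal{U}_{n,n}$ into $\mathcal{U}_n$; a derived tensor functor is automatically triangulated, so $\mathcal{F}_n:\mathcal{U}_{n,n}\ra\mathcal{U}_n$ is exact and $K_0(\mathcal{F}_n)$ is defined. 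Evaluating $K_0(\mathcal{F}_n)$ on the classes $[P_a\boxtimes P_b]$ by resolving $M_n\ot(P_a\boxtimes P_b)$ identifies it with the Clifford multiplication on $K_0(\rn)$, whose restriction to $\mf{U}_n\ot\mf{U}_n$ is $\op{m}_n$ by the generator computation; this gives the required commuting square and completes the theorem.

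The hardest part, I expect, is the categorified relation $EF+FE = q^{n-1}+\cdots+q^{1-n}$: one must write down explicitly, over $\rn$, an iterated mapping cone relating $\mathcal{F}_n(\mf{E}\boxtimes\mf{F})$ and $\mathcal{F}_n(\mf{F}\boxtimes\mf{E})$ to $n$ internal-grading shifts of $\mf{1}$, check that its differential squares to zero and reproduces the complex predicted by the contact geometry, and at the same time show that no further relations appear in $K_0(\mathcal{U}_n)$ --- that is, that the proposed $\zq$-basis is genuinely independent in the Grothendieck group. The supporting technical points --- pretriangulatedness of $DGP$, the fact that $\mathcal{F}_n$ really lands in $DGP(\rn)$ rather than a larger module category, and enough Krull--Schmidt finiteness for the $K_0$'s to be computable --- are comparatively routine; it is the bypass-triangle computation where the contact geometry of the disk does the real work.
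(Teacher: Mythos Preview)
Your overall architecture matches the paper's: build $DGP(\rn)$ and $DGP(\drnn)$, tensor with a DG bimodule to get the exact functor, embed $\mf{U}_n$ into the Clifford algebra $K_0(\rn)$ via $E\mapsto\sum_{i\text{ even}}X_i$, $F\mapsto\sum_{i\text{ odd}}X_i$, and restrict. But there are two genuine gaps in the execution.

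First, your definition of $\mathcal{U}_n$ as the triangulated subcategory generated by $\mf{E}$ and $\mf{F}$ (and of $\mathcal{U}_{n,n}$ by the four external products) is too small. Triangulated operations alone---shifts, cones, summands---only produce $\zq$-linear combinations of the generating classes in $K_0$, so $K_0(\mathcal{U}_n)$ would be generated as a $\zq$-module by $[\mf{E}]$ and $[\mf{F}]$, a module of rank at most $2$ (or $3$ if you throw in $\mf{1}$). But $\mf{U}_n$ is free of rank $4$ over $\zq$, with basis $1,E,F,EF$. There is no object in your $\mathcal{U}_n$ whose class is $EF$: the lift of $EF$ involves projectives $P([i,j])$ that are not reachable from the $P([i])$'s by cones, since the quiver $\gn$ has no arrows between length-$1$ vertices. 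The paper fixes this by defining $\mathcal{U}_n$ to contain \emph{all} iterated products $\rho_n(\mathcal{A}_1,\rho_n(\mathcal{A}_2,\ldots))$ of the lifts of $1,q^{\pm1},E,F$, for every bracketing, and only then taking the triangulated closure. The monoidal functor $\rho_n=\mathcal{M}_n|_{H^0}\circ\chi_n$ is what supplies the missing objects; the triangulated structure alone does not.

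Second, your mechanism for the relations is wrong. You write that $E^2=0$ follows ``from contractibility of the corresponding two-fold products,'' but in fact $\mathcal{E}\mathcal{E}$ is \emph{not} contractible: it is a complex with zero differential, equal to $\bigoplus_{i>j\text{ even}}P([i,j])$ sitting in both cohomological degrees $-1$ and $0$. Its class vanishes in $K_0$ only because the two copies cancel with opposite sign, not because the object is zero in the homotopy category. The paper makes this point explicitly (an equation in $\mf{U}_n$ need not lift to an isomorphism in $\mathcal{U}_n$). Likewise there is no single distinguished triangle realizing $EF+FE=\sum q^{2i+1-n}$; that identity is obtained by summing the Clifford relations $X_iX_{i+1}+X_{i+1}X_i=q^{2i+1-n}$, each of which comes from its own triangle $P([i,i+1])\to P([\es])\{2i+1-n\}\to P([i+1,i])$. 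The verification that $\mf{U}_n\hookrightarrow K_0(\rn)$ is therefore a computation in the Clifford algebra, not a single categorified exact triangle, and injectivity comes for free from the known $\zq$-basis of $K_0(\rn)$ indexed by $V(\gn)$.
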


The motivation is from the contact category introduced by Honda \cite{Honda1}, which presents an algebraic way to study contact topology in dimension 3.
The contact category $\C(\Sigma)$ of $\Sigma$ is an additive category associated to a compact surface $\Sigma$.
The objects of $\C(\Sigma)$ are isotopy classes of {\em dividing sets} on $\Sigma$ with some homotopy grading.
The morphisms are generated by {\em tight} contact structures on $\Sigma \times [0,1]$ with prescribed dividing sets on $\Sigma \times \{0,1\}$.
More precisely, a dividing set on $\Sigma$ is a properly embedded 1-manifold, possibly disconnected and possibly with boundary, which divides $\Sigma$ into positive and negative regions.
Any dividing set with a contractible component is defined as the zero object since there is no tight contact structure in a neighborhood of the dividing curve by a criterion of Giroux \cite{Gi}.
As basic blocks of morphisms, {\em bypass attachments} introduced by Honda \cite{Honda2} locally change dividing sets as shown in Fig \ref{triangle}.

\begin{figure}[h]
\begin{overpic}
[scale=0.3]{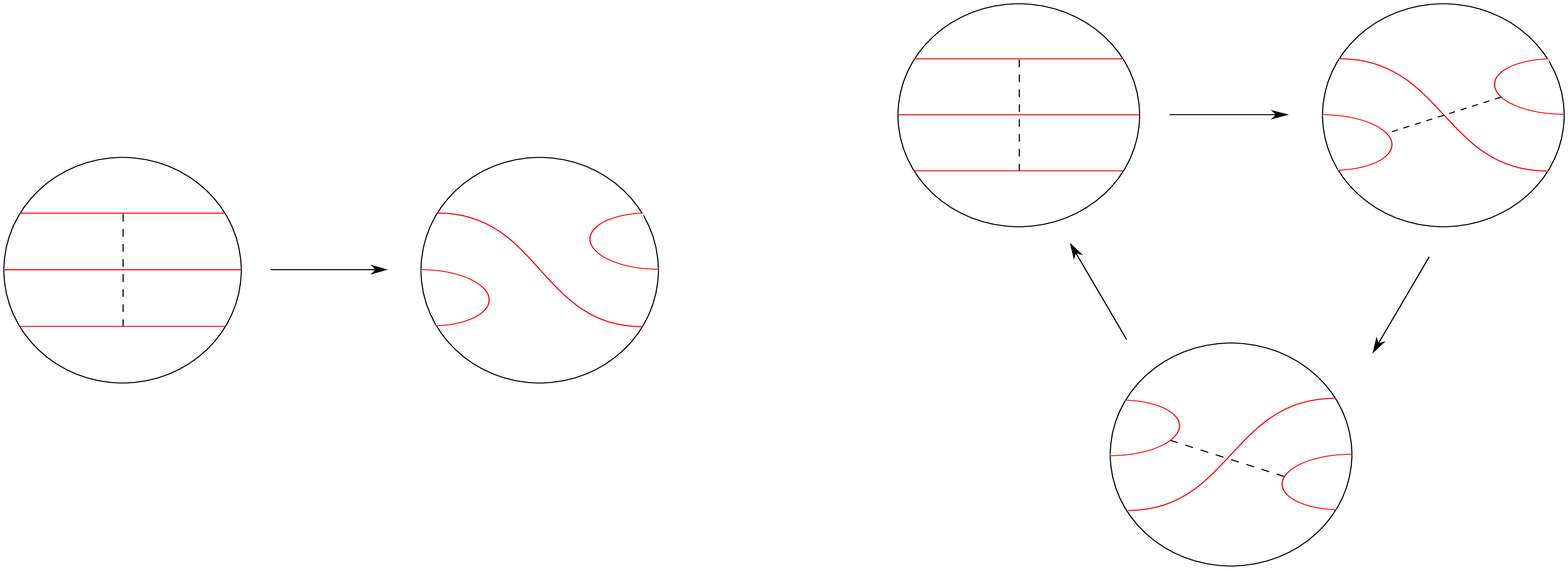}
\end{overpic}
\caption{The left is a bypass attachment; the right is a distinguished triangle given by a triple of bypass attachments.}
\label{triangle}
\end{figure}
The main feature of the contact category $\C(\Sigma)$ is the existence of distinguished triangles given by a triple of bypass attachments as shown in Fig \ref{triangle}.
The homotopy grading associated to dividing sets is related to the shift functor in a triangulated category.
In particular, Huang \cite{Huang} showed that a triple of bypass attachments changes the homotopy grading by $1$.
The triangulated structure was also studied by Mathews \cite{Ma} from a combinatorial perspective.

This paper can be viewed as an algebraic formulation of the triangulated structure on the contact category of a disk, viewed as a rectangle.
For $n>0$, let $\C_n$ be the contact category of a rectangle $D_n$ with $n+2$ marked points on both the left and right sides of $\bdry D_n$ and no marked points on the top and bottom sides.
The marked points are intersection points of dividing sets with $\bdry D_n$.
Then $\C_n$ is a monoidal category equipped with a bifunctor $\rho_n: \C_n \times \C_n \ra \C_n$.
The monoidal structure $\rho_n$ is given by horizontally stacking two dividing sets along their common boundaries for the objects and sideways stacking two contact structures for the morphisms as shown in Fig \ref{obj}.
\begin{figure}[h]
\begin{overpic}
[scale=0.3]{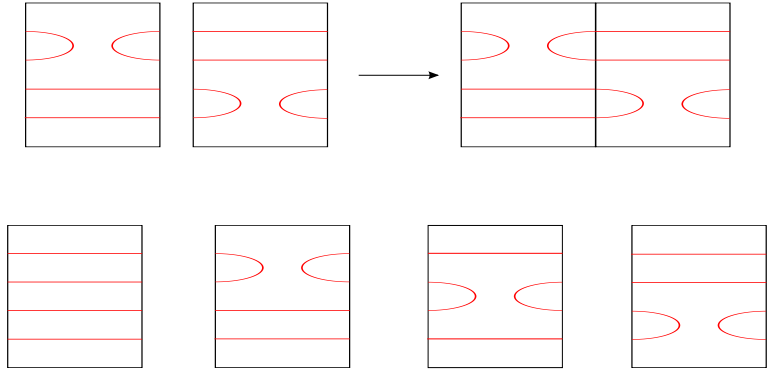}
\end{overpic}
\caption{The top is a stacking of two dividing sets; the bottom is a distinguished collection of dividing sets: $[\es], [0], [1], [2]$.}
\label{obj}
\end{figure}

There is a collection of distinguished dividing sets $[\es]$ and $[i]$'s for $0 \leq i \leq n$ as shown in Fig \ref{obj}.
Note that any nonzero dividing set can be represented as a horizontal stacking of a sequence in the collection up to isotopy.
Let $[i]\cdot[j]$ denote the horizontal stacking of dividing sets $[i]$ and $[j]$.
Let $X_{\es}$ and $X_i$'s be classes of $[\es]$ and $[i]$'s in the Grothendieck group of $\C_n$.
They satisfy the following properties illustrated in Fig \ref{relation}:
\begin{figure}[h]
\begin{overpic}
[scale=0.3]{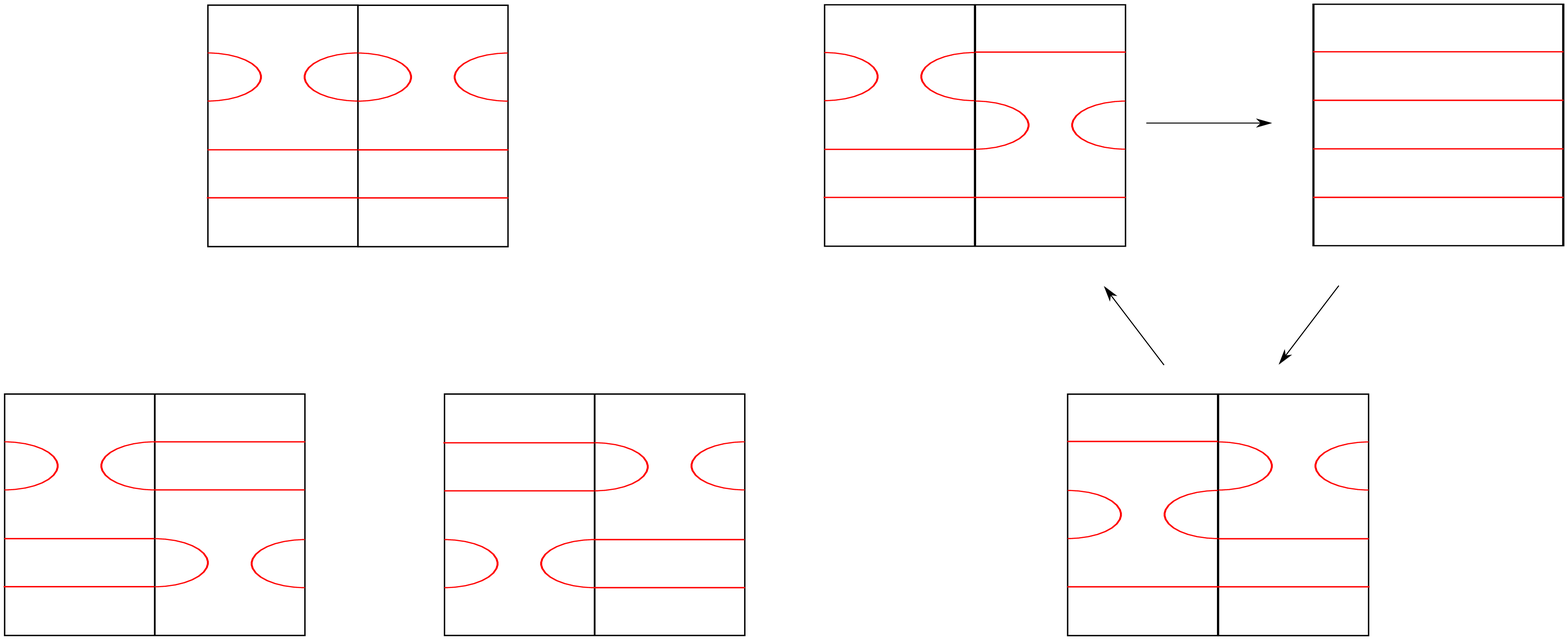}
\end{overpic}
\caption{The top left picture represents $[0]\cdot[0]$; the bottom left compares $[0]\cdot[2]$ and $[2]\cdot[0]$; the right picture is a distinguished triangle: $[0]\cdot[1]\ra[\es]\ra[1]\cdot[0]$.}
\label{relation}
\end{figure}
\be
\item $X_{\es}$ is the unit since any dividing set is unchanged when stacking $[\es]$ from both left and right.
\item $X_i^2=0$ since the dividing set $[i]\cdot[i]$ contains a contractible loop.
\item $X_iX_j=-X_jX_i$ for $|i-j|>1$, since dividing sets $[i]\cdot[j]$ and $[j]\cdot[i]$ are in the same isotopy class as dividing sets, but their homotopy gradings differ by $1$. Hence their classes differ by a minus sign in the Grothendieck group.
\item $X_iX_{i+1}+X_{i+1}X_i=q^{2i+1-n}X_{\es}$, since there exists a distinguished triangle: $[i]\cdot[i+1]\ra[\es]\ra[i+1]\cdot[i]$. The exponent of $q$ is related to the height of the location of the distinguished triangle.
\ee

Given the distinguished objects $[\es]$ and $[i]$'s, we construct a quiver $\gn$ as follows.
The set of vertices $V(\gn)$ consists of objects which are obtained by horizontally stacking the distinguished ones in decreasing order, i.e., $[i_0] \cdot [i_1] \cdots [i_k]$ where $n\geq i_0 > \cdots > i_k \geq 0$.
There exists an arrow $[i_0] \cdot [i_1] \cdots [i_k] \ra [j_0] \cdot [j_1] \cdots [j_l]$ if $l = k + 2$ and $\{j_0,j_1,\cdots,j_l\}=\{i_0,i_1,\cdots,i_k\} \sqcup \{s+1, s\}$ for some $s$.
Each arrow actually represents a tight contact structure given by a single bypass attachment.
For example there is an arrow $[\es]\ra[1]\cdot[0]$ in the distinguished triangle $[0]\cdot[1]\ra[\es]\ra[1]\cdot[0]$.
We form a quotient of the path algebra $\F\gn$ of the quiver $\gn$, where $\F$ is the field of 2 elements.
The relations come from the fact that two disjoint bypass attachments commute up to isotopy.
The graded algebra $\rn$ is then obtained by adding an extra $q$-grading to the quotient.
We view $\rn$ as a $q$-graded DG algebra $(\rn, d=0)$ which has trivial differential and is concentrated in cohomological grading $0$.

At this point we pass to algebra\footnote{In fact, the rest of the paper is just algebra which is motivated by the contact category.} and model $\C_n$ by $DGP(\rn)$, a full subcategory of a DG category $DG(\rn)$ of $q$-graded DG $\rn$-modules.
More precisely, $DGP(\rn)$ consists of finitely iterated mapping cones of maps between some distinguished projective DG $\rn$-modules.
As triangulated categories, the $0$-th homology category $H^0(DGP(\rn))$ is equivalent to $\mf{K}^{b}(\rn)$,
the homotopy category of bounded cochain complex of finitely generated projective $q$-graded $\rn$-modules.
The Grothendieck group $K_0(\rn)$ of $H^0(DGP(\rn))$, is a free $\Z[q^{\pm1}]$-module over the vertex set $V(\gn)$.

Motivated by the horizontal stacking of dividing sets on $\C_n$, we define a multiplication on $K_0(\rn)$, i.e., $\op{m}_n : K_0(\rn) \otimes K_0(\rn) \ra K_0(\rn)$.
To categorify $K_0(\rn) \otimes K_0(\rn)$, we construct two DG algebras: $\drnn$ and its cohomology algebra $H(\drnn)=\rnn$.
Similarly, consider a DG category $DGP(\drnn)$ as a full subcategory of $DG(\drnn)$ such that its $0$-th homology category $H^0(DGP(\drnn))$ is equivalent to $\mf{K}^{b}(\rnn)$.
The Grothendieck group $K_0(H^0(DGP(\drnn)))$ of $H^0(DGP(\drnn))$ is isomorphic to $K_0(\rn) \otimes K_0(\rn)$.
We then construct a functor $\cal{M}_n: DGP(\drnn) \ra DGP(\rn)$ by tensoring with a DG $(\rn, \drnn)$-bimodule $T_n$.
We show that $\cal{M}_n$ induces an exact functor $\cal{M}_n|_{H^0}: H^0(DGP(\drnn)) \ra H^0(DGP(\rn))$ and $\cal{M}_n|_{H^0}$ categorifies the multiplication map $\op{m}_n$ on the level of Grothendieck group.
$$
\xymatrix{
  DGP(\drnn) \ar[r]^{\cal{M}_n} \ar[d]_{H^{0}} & DGP(\rn) \ar[d]_{H^{0}} \\
 H^0(DGP(\drnn)) \ar[r]^{\cal{M}_n|_{H^0}} \ar[d]_{K_{0}} & H^0(DGP(\rn)) \ar[d]_{K_{0}} \\
 K_0(\rnn) \ar[r]^{\op{m}_n}  & K_0(\rn).
}$$
Let $P([\es])$ and $P([i])$ be projective $\rn$-modules corresponding to vertices $[\es]$ and $[i]$ in $V(\gn)$ and let $X_{\es}$ and $X_i$ be the classes of $P([\es])$ and $P([i])$ in $K_0(\rn)$.
Then we have the following:
\begin{thm} \label{Clifford}
The multiplication $\op{m}_n$ makes $K_0(\rn)$ into an associative $\Z[q^{\pm1}]$-algebra with unit $X_{\es}$, generators $X_i$'s for $0\leq i \leq n$, and relations:
\begin{gather*}
X_i^2=0;\\
X_iX_j=-X_jX_i \hspace{.2cm} \mbox{if} \hspace{.2cm} |i-j|>1;\\
X_iX_{i+1}+X_{i+1}X_i=q^{2i+1-n} .
\end{gather*}
There exists the exact functor $\cal{M}_n|_{H^0}: H^0(DGP(\drnn)) \ra H^0(DGP(\rn))$ which categorifies the multiplication $\op{m}_n$.
\end{thm}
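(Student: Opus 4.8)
The plan is to prove the two assertions in turn: first that $\op{m}_n$ has the stated presentation, and then that the functor $\cal{M}_n|_{H^0}$ exists, is exact, and lifts $\op{m}_n$. Since $\op{m}_n$ is already defined directly on $K_0(\rn)$ and $\cal{M}_n$ is designed to lift it, the first half is essentially combinatorial and the second half essentially homological.

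\emph{The presentation of $(K_0(\rn),\op{m}_n)$.} Recall that $K_0(\rn)$ is free over $\Z[q^{\pm1}]$ on $\{X_v\}_{v\in V(\gn)}$, and that a vertex $[i_0]\cdots[i_k]$ with $n\ge i_0>\cdots>i_k\ge 0$ records the subset $\{i_0,\dots,i_k\}\subseteq\{0,1,\dots,n\}$, so $\op{rank}_{\Z[q^{\pm1}]}K_0(\rn)=2^{n+1}$. The three families of relations are then read off the definition of $\op{m}_n$ on basis elements (equivalently, by computing $\cal{M}_n$ on distinguished projectives, see below). The identity $X_i^2=0$ holds because stacking $[i]$ with $[i]$ produces a zero object in $H^0(DGP(\rn))$ — concretely an acyclic module, reflecting the contractible loop in the dividing set. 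The relation $X_iX_j=-X_jX_i$ for $|i-j|>1$ holds because the stackings $[i]\cdot[j]$ and $[j]\cdot[i]$ yield isomorphic objects up to a cohomological shift by $1$ (same isotopy class of dividing set, homotopy gradings differing by $1$), and a shift by $1$ contributes the sign $-1$ in $K_0$. The key relation $X_iX_{i+1}+X_{i+1}X_i=q^{2i+1-n}X_{\es}$ reflects a distinguished triangle in $H^0(DGP(\rn))$ — a single bypass attachment — relating the two stacked objects for $[i]\cdot[i+1]$ and $[i+1]\cdot[i]$ to $P([\es])$; passing to $K_0$ yields the additive relation, and the exponent $2i+1-n$ is the total internal $q$-degree shift attached to that bypass, computed from the grading on $\rn$. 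Associativity of $\op{m}_n$ and the fact that $X_{\es}$ is a two-sided unit are built into its definition via stacking (which is associative and unital for $[\es]$), and also descend from the coherence of $\cal{M}_n$ below.

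\emph{Completeness of the relations.} These relations define a $\Z[q^{\pm1}]$-algebra $A_n$ and a surjective homomorphism $\phi\colon A_n\to K_0(\rn)$, $X_i\mapsto X_{[i]}$; surjectivity holds because each basis vector $X_{[i_0]\cdots[i_k]}$ is, up to a power of $q$, the iterated $\op{m}_n$-product $X_{i_0}X_{i_1}\cdots X_{i_k}$ (the projective of a decreasing stacking is, up to grading shift, the $\op{m}_n$-product of the projectives of its letters). Conversely, $X_{i+1}X_i=-X_iX_{i+1}+q^{2i+1-n}$ and $X_iX_j=-X_jX_i$ sort any monomial into strictly decreasing order modulo strictly shorter ones, and $X_i^2=0$ removes repetitions, so an induction on word length shows $A_n$ is spanned over $\Z[q^{\pm1}]$ by the $2^{n+1}$ strictly decreasing monomials. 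Tensoring with $\Q(q)$, a dimension count forces $\phi\otimes\Q(q)$ to be an isomorphism; hence the decreasing monomials form a $\Z[q^{\pm1}]$-basis of $A_n$, $\phi$ is injective as well, and $(K_0(\rn),\op{m}_n)$ has exactly the stated presentation.

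\emph{The lifting functor.} The functor $\cal{M}_n=T_n\ot_{\drnn}(-)$ is a DG functor, hence commutes with cohomological shifts and carries mapping cones to mapping cones, so it restricts to $DGP(\drnn)\to DGP(\rn)$ provided $T_n\ot_{\drnn}P$ lies in $DGP(\rn)$ for each distinguished projective $P$ of $\drnn$, i.e. each $P$ of the form $P(v)\boxtimes P(w)$. This is a direct bimodule computation identifying $T_n\ot_{\drnn}(P(v)\boxtimes P(w))$ with an explicit finitely iterated cone of distinguished projective $\rn$-modules that carries out ``stack $v$ with $w$, then resolve into decreasing vertices by a chain of bypass triangles''. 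Passing to $0$-th homology yields the exact functor $\cal{M}_n|_{H^0}$, and since $K_0(\rnn)$ and $K_0(\rn)$ are generated by classes of distinguished projectives, the equality $K_0(\cal{M}_n|_{H^0})=\op{m}_n$ reduces to matching $[\,T_n\ot_{\drnn}(P(v)\boxtimes P(w))\,]$ with $\op{m}_n(X_v\ot X_w)$, which is how $T_n$ is built. I expect the main obstacle to be precisely this bimodule computation and its specialization to the third relation: one must resolve stackings that are not in decreasing order (such as $[i]\cdot[i+1]$) as iterated cones and keep track of every internal $q$-grading shift so that the exponent comes out to be exactly $2i+1-n$; the remaining ingredients — exactness of $\cal{M}_n|_{H^0}$ and the sorting and rank arguments — are routine.
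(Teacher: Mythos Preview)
Your outline is close to the paper's and largely correct, but your treatment of associativity is a genuine gap. In the paper, $\op{m}_n$ is defined purely combinatorially (Definition~\ref{higher} specialized to $h=-1$); the contact-category stacking is only motivation from the introduction and is never invoked in proofs. Associativity is therefore not ``built in'': the paper devotes Proposition~3.1 and Lemmas~\ref{assocativity local}--\ref{length} to it, arguing by induction on $l(\mf{y})$ and reducing to the three local checks $\op{m}([s],\op{m}([s],[s{+}1]))$, $\op{m}([s],\op{m}([s{+}1],[s{+}1]))$, and $\op{m}([s{-}1],\op{m}([s],[s{+}1]))$. Your alternative suggestion that associativity ``descend[s] from the coherence of $\cal{M}_n$'' would require a monoidal compatibility for $\cal{M}_n$ that the paper never establishes (and which would itself be at least as much work). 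Similarly, your explanations of the three relations in terms of acyclic objects, shifts, and triangles in $H^0(DGP(\rn))$ are really statements about $\cal{M}_n(P([i],[j]))$; in the paper the relations are read off directly from the special cases in Definition~\ref{example}, independently of $\cal{M}_n$, and only afterwards is $K_0(\cal{M}_n|_{H^0})$ matched with $\op{m}_n$.

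On the other hand, your completeness argument (sorting monomials into decreasing order and comparing ranks) is more explicit than what the paper offers: Proposition~\ref{K_0} simply asserts that the relations suffice, noting that every $\mf{x}\in V(\gn)$ is a product of length-one sequences. Your PBW-type argument is a clean way to fill that in. For the functor, your outline matches the paper's: the paper constructs $T$ explicitly so that $T\otimes_{\drr}P(\mf{x},\mf{y})=T(\mf{x},\mf{y})$ has class $\op{M}(\mf{x},\mf{y})|_{h=-1}=\op{m}(\mf{x},\mf{y})$ (Remark~\ref{class}, Lemma~\ref{tensor}), checks that $\cal{M}$ preserves closed and exact morphisms, and then evaluates $K_0(\cal{M}|_{H^0})$ on generators. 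Your sketch of this part is accurate, but the ``direct bimodule computation'' you allude to is in fact the bulk of Section~4.1.
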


The algebra $K_0(\rn)$ is a {\em Clifford algebra} $Cl(V_n, Q_n)$ over $\zq$, where $V_n$ is a free $\zq$-module generated by $\{X_i ~|~ 0 \leq i \leq n\}$ and $Q_n$ is a quadratic form on $V_n$ given by:
$$Q_n(\sum \limits_{i=0}^{n}a_i X_i)=\sum \limits_{i=0}^{n-1}a_{i}a_{i+1} q^{2i+1-n},$$
for $a_i \in \zq$.
Note that the basis $\{X_i ~|~ 0 \leq i \leq n\}$ is different from the basis $\{\psi_i, \psi_i^* ~|~ i \in \Z\}$ of the Clifford algebra in \cite{Kac}.
In the categorification of a {\em Heisenberg algebra}, Khovanov \cite{Kh3} used another basis which is different from the basis $\{p_i, q_i ~|~ i \in \Z\}$.

Finally, we view $\mf{U}_n$ as a subalgebra of $K_0(\rn)$ by setting
$$E = \sum \limits_{\tiny{\begin{array}{c}0\leq i \leq n\\i \hspace{.1cm} \mbox{even}\end{array}}} X_i, \hspace{1cm}
F = \sum \limits_{\tiny{\begin{array}{c}0\leq i \leq n\\i \hspace{.1cm} \mbox{odd}\end{array}}} X_i
$$
It is easy to verify that $E$ and $F$ satisfy the defining relations of $\mf{U}_n$.
We then formally construct triangulated full subcategories $\cal{U}_n$ of $H^0(DGP(\rn))$ and $\cal{U}_{n, n}$ of $H^0(DGP(\drnn))$. The Grothendieck groups of $\cal{U}_n$ and $\cal{U}_{n, n}$ are $\mf{U}_n$ and $\mf{U}_n\otimes \mf{U}_n$. Then the restriction $\mathcal{F}_n$ of $\mathcal{M}_n|_{H^0}$ to $\mathcal{U}_{n, n}$ satisfies Theorem \ref{main}.

\vspace{.3cm}
\noindent {\bf The organization of the paper:}
In Section 2 we construct the quivers $\gn, \gnn$ and the $q$-graded DG algebras $\rn, \rnn$ and $\drnn$.
In Section 3 we define the multiplication on $K_0(\rn)$ and show that it is associative.
In Section 4 we give a categorification of the multiplication: $\mathcal{M}_n: DGP(\drnn) \ra DGP(\rn)$ through the $q$-graded DG $(\rn,\rnn)$-bimodules $T_n$.
In Section 5 we construct the subcategory $\mathcal{U}_n$.
It categorifies the integral version $\mf{U}_n$ of $\uq$ and the restriction of $\mathcal{M}_n|_{H^0}$ categorifies the algebra structure on $\mf{U}_n$.

\vspace{.3cm}
\noindent {\bf Acknowledgements:}
I am very grateful to Ko Honda for many ideas and suggestions and introducing me to the contact category.
I would like to thank Aaron Lauda for teaching me a great deal about the representation theory and categorification.
Thanks also to Yang Huang for useful discussions.

\section{The $q$-graded DG algebras $\rn$ and $\drnn$}
\subsection{The $q$-graded DG algebra $\rn$}
\subsubsection{The quiver $\gn$}
In this section we construct a family of quivers $\gn=(V(\gn), A(\gn))$ for $n>0$, where $V(\gn)$ and $A(\gn)$ are vertex and arrow sets of $\gn$.

\begin{defn} [Quiver $\gn=(V(\gn), A(\gn))$]
\n \be
\item Let $V(\gn)$ be the set of decreasing sequences of integers bounded by $n$ and $0$, i.e.,

$V(\gn) = \{[\emptyset]\} \sqcup \{\mf{x} = [x_0,\cdots,x_i] ~ |~ n\geq x_0 > \cdots > x_i \geq 0, x_k \in \Z ~\mbox{for}~ 0 \leq k \leq i\}.$

\item Let $l:V(\gn)\rightarrow \Z$ be a {\em length function}, given by $l(\mf{x})=i+1$ for $\mf{x}= [x_0,x_1,\cdots,x_i]$ and $l([\es])=0$.

\item Let $A(\gn)$ be the subset of $V(\gn) \times V(\gn)$ ,
where $(\mf{x}, \mf{y}) \in A(\gn)$, $\mf{x}= [x_0,x_1,\cdots,x_i]$, $\mf{y}= [y_0,y_1,\cdots,y_j]$ if $j = i + 2$ and as sets $\{y_0,y_1,\cdots,y_j\}=\{x_0,x_1,\cdots,x_i\} \sqcup \{s+1, s\}$ for some $s$, i.e., $\mf{y}$ is obtained from $\mf{x}$ by adding a pair of adjacent integers $\{s+1, s\}$.
We write an arrow $(\mf{x} \stackrel{s}\to \mf{y})$ if $(\mf{x}, \mf{y}) \in A(\gn)$.
\ee
\end{defn}

\begin{rmk}
The vertices of $\gn$ are in one-to-one correspondence with subsets of $\{0,1,\cdots,n\}$.
We write $s \in \mf{x} = [x_0,x_1,\cdots,x_i]$ if $s=x_k$ for some $k$ and $s \notin \mf{x}$ otherwise.
\end{rmk}

In order to decompose $\gn$ into its connected components, we define the following:

\begin{defn} [Euler grading on $V(\gn)$]
The {\em Euler grading} $\op{e}: V(\gn) \ra \Z$ is defined as

$\op{e}(\mf{x})=\sum \limits_{k=0}^{i} (-1)^{x_k}$ for $\mf{x} = [x_0,x_1,\cdots,x_i]$ and
$\op{e}([\es])=0$.
\end{defn}

It is easy to see that $\mf{x}$ and $\mf{y}$ are in the same connected component of $\gn$ if and only if they have the same Euler grading: $\op{e}(\mf{x})= \op{e}(\mf{y})$.
Therefore, $\gn = \sqcup_e \Gamma_{n,e}$, where $\Gamma_{n,e}$ is the connected component with Euler grading $e$.

\begin{rmk}
The Euler grading $\op{e}$ actually comes from the Euler number of a dividing set.
Recall a dividing set divides the surface into positive and negative regions.
Then the {\em Euler number} is the Euler characteristic of the positive region minus the Euler characteristic of the negative region.
\end{rmk}

\begin{example} [Quiver $\Gamma_2$]
The quiver $\Gamma_2$ has four components $\Gamma_{2,e}$ for $e=-1, 0 ,1, 2$, where $\Gamma_{2,0}$ and $\Gamma_{2, 1}$ are dual to each other. See Figure \ref{quiver}.
\end{example}
\begin{figure}[h]
\begin{overpic}
[scale=0.3]{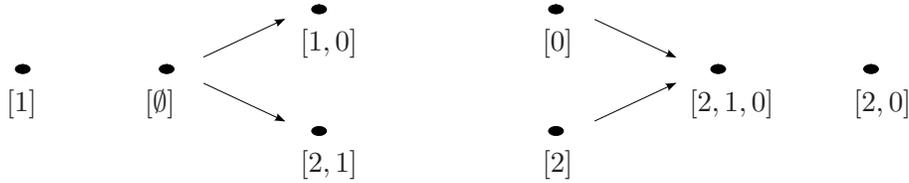}
\put(-1,3){$[1]$}
\put(97,3){$[2,0]$}
\put(15,3){$[\es]$}
\put(78,3){$[2,1,0]$}
\put(33,10){$[1,0]$}
\put(61,10){$[0]$}
\put(33,-4){$[2,1]$}
\put(61,-4){$[2]$}
\end{overpic}
\caption{The quiver $\Gamma_2$.}
\label{quiver}
\end{figure}

\subsubsection{The $q$-graded algebra $\rn$}
We define the $q$-graded algebra $\rn$ which is closely related to the path algebra $\F \gn$ of the quiver $\gn$. $\F$ is fixed as the ground field throughout the paper.

\begin{defn} [$q$-graded algebra $\rn$] \label{def rn}
$\rn$ is an associative graded $\F$-algebra with generators $r(\mf{x} \stackrel{s}\to \mf{y})$ for each arrow $(\mf{x} \xrightarrow{s} \mf{y})$ in $\gn$, idempotents $e(\mf{x})$ for each vertex $\mf{x}$ in $\gn$ and relations:
\begin{gather*}
e(\mf{x}) \cdot e(\mf{y})=\delta_{\mf{x},\mf{y}}\cdot e(\mf{x}) ~\mbox{for all}~ \mf{x}, \mf{y};\\
e(\mf{x}) \cdot r(\mf{x} \stackrel{s}\to \mf{y})= r(\mf{x} \stackrel{s} \to \mf{y}) \cdot e(\mf{y})= r(\mf{x} \stackrel{s} \to \mf{y}) ~\mbox{for all}~ r(\mf{x} \stackrel{s}\to \mf{y});\\
r(\mf{x} \stackrel{s} \to \mf{y}) \cdot r(\mf{y} \stackrel{t} \to \mf{w})=r(\mf{x} \stackrel{t} \to \mf{z}) \cdot r(\mf{z} \stackrel{s} \to \mf{w})  ~\mbox{for all}~ r(\mf{x} \stackrel{s} \to \mf{y}), r(\mf{y} \stackrel{t} \to \mf{w}), r(\mf{x} \stackrel{t} \to \mf{z}), r(\mf{z} \stackrel{s} \to \mf{w}).
\end{gather*}
The unit of $\rn$ is $\mathbf{1}_{\rn}=\sum \limits_{\mf{x} \in V(\gn)} e(\mf{x})$.
The $q$-grading on $\rn$ is given on generators as follows: $\op{deg}_{\rn}(e(\mf{x}))=0, \op{deg}_{\rn}(r(\mf{x} \stackrel{s} \to \mf{y}))=n-1-2s$.
\end{defn}

\begin{rmk}
The last relation in the definition above means that the two paths from $\mf{x}$ to $\mf{w}$ in $\gn$: $\mf{x} \stackrel{s} \to \mf{y} \stackrel{t} \to \mf{w}$ and $\mf{x} \stackrel{t} \to \mf{z} \stackrel{s} \to \mf{w}$ commute if $\mf{w}$ is obtained from $\mf{x}$ by adding two disjoint pairs of adjacent integers $\{s+1, s\}$ and $\{t+1, t\}$.
\end{rmk}

We refer to the book \cite{ASS} for an introduction to the representation theory of quivers.
It is easy to see $\rn$ is a finite dimensional algebra since the quiver $\gn$ has no oriented cycles.
Since $\{e(\mf{x})~|~\mf{x}\in \gn \}$ is a complete set of primitive orthogonal idempotents in $\rn$, $\{P(\mf{x})=\rn e(\mf{x}) ~|~ \mf{x} \in \gn \}$ forms a complete set of non-isomorphic indecomposable projective graded left $\rn$-modules, up to grading shifts.
Let $A\{m\}$ denote $A$ with its grading shifted by $m$, i.e., $A\{m\}=\{a \in A~|~ \op{deg}_{A\{m\}}(a)=\op{deg}_{A}(a)-m\}$.
Then any projective graded left $\rn$-module $A$ is a direct sum of indecomposables $P(\mf{x})\{m\}$.

Consider $\mf{K}^{b}(\rn)$, the homotopy category of bounded cochain complexes of finitely generated projective graded modules over $\rn$ with grading-preserving differentials.
The chain maps are also grading-preserving.
For any cochain complex $M=\{\cdots \to M^s \to M^{s+1} \to \cdots \} \in \mf{K}^{b}(\rn)$, let $M[p]$ be $M$ with the homological grading shifted by $p$, i.e., $M[p]^s=M^{s+p}$.
By a standard result in homological algebra, $\mf{K}^{b}(\rn)$ is a triangulated category.

Let $K_0(\rn)$ be the Grothendieck group of $\mf{K} ^{b}(\rn)$.
It is a $\Z[q^{\pm1}]$-module with generators $[P]$ over all finitely generated projective graded $\rn$-modules and relations $[P\{1\}]=q[P]$, $[P[1]]=-[P]$ and $[P_2]=[P_1]+[P_3]$ for each short exact sequence $0 \to P_1 \to P_2 \to P_3 \to 0$.
It is easy to see $K_0(\rn)$ is a free $\Z[q^{\pm1}]$-module over the basis $\{[P(\mf{x})]~|~ \mf{x} \in V(\gn)\}$.
If $\Z[q^{\pm1}]\langle S \rangle$ denotes the free $\Z[q^{\pm1}]$-module generated by the set $S$, then
$$K_0(\rn) = \Z[q^{\pm1}]\langle V(\gn)\rangle.$$

\subsubsection{The DG category $DGP(\rn)$}
We refer to \cite[Section 10]{BL} for an introduction to DG algebras and DG modules.
We consider DG algebras with an additional $q$-grading as follows.
\begin{defn}
A {\em $q$-graded DG algebra $(A, d)$} is a doubly graded associative $\F$-algebra $A=\bigoplus\limits_{i,j} A^{i,j}$ with a unit $\mf{1}_A \in A^{0,0}$, where $i$ is the cohomological grading and $j$ is the $q$-grading.
The differential $d$ is an additive endomorphism of degree $(1,0)$ such that for $a, b \in A$:
\begin{gather*}
d^2=0,  ~ d(\mf{1}_A)=0, \\
d(a\cdot b)=d(a)\cdot b + a \cdot d(b).
\end{gather*}
\end{defn}

\begin{defn}
A {\em left $q$-graded DG module $(M, d_M)$} over a $q$-graded DG algebra $(A, d)$ is a doubly graded unitary left $A$-module $M=\bigoplus\limits_{i,j} M^{i,j}$, where $i$ is the cohomological grading and $j$ is the $q$-grading. The differential
$d_M$ is an additive endomorphism of degree $(1,0)$ such that for $a \in A, m \in M$
\begin{align*}
d_M^2=0, \hspace{.5cm} d_M(a\cdot m)=d(a)\cdot m + a \cdot d_M(m).
\end{align*}
\end{defn}

We view $\rn$ as a $q$-graded DG algebra $(\rn, d=0)$ which has trivial differential and is concentrated in cohomological grading $0$.
We are interested in the differential graded category $DG(\rn)$ of $q$-graded DG $\rn$-modules.
We refer to \cite{Ke} for an introduction to differential graded categories.
\begin{defn} [DG category $DG(A)$ for a $q$-graded DG algebra $A$]
\n \be
\item The objects of $DG(A)$ are left $q$-graded DG $A$-modules.
\item The space of morphisms $(\op{Hom}_{DG(A)}(M, N), d)=(\bigoplus \limits_{i}\op{Hom}_{DG(A)}^{i}(M, N), \bigoplus \limits_{i}d^i)$ is a cochain complex, where $\op{Hom}_{DG(A)}^{i}(M, N)$ is the set of left $A$-module maps of degree $(i, 0)$ and $$d^i: \op{Hom}_{DG(A)}^{i}(M, N) \ra \op{Hom}_{DG(A)}^{i+1}(M, N)$$ is given as $d(f)=d \circ f + f \circ d$.
\item A morphism $f$ is {\em closed} if $d(f)=0$; $f$ is {\em exact} if $f=d(g)$ for some morphism $g$.
    Let $Z^{i}(\op{Hom}_{DG(A)}(M, N))$ and $B^{i}(\op{Hom}_{DG(A)}(M, N))$ denote the subset of $\op{Hom}_{DG(A)}^i(M, N)$ consisting of closed morphisms and exact morphisms respectively.
\ee
\end{defn}

\rmk For $(\rn, d=0)$, a $q$-graded DG $\rn$-module is a cochain complex of $q$-graded $\rn$-modules. A closed morphism of degree $(0, 0)$ is a chain map.

\begin{defn}
The {\em 0-th homology category} $H^0(DG(A))$ of the DG category $DG(A)$ has the same objects as $DG(A)$ and its morphisms are given by $$\op{Hom}_{H^0(DG(A))}(M, N)=Z^{0}(\op{Hom}_{DG(A)}(M, N))/B^{0}(\op{Hom}_{DG(A)}(M, N)).$$
\end{defn}

\begin{rmk} The 0-th homology category $H^0(DG(A))$ is isomorphic to the homotopy category of $q$-graded DG $A$-modules.
\end{rmk}

\begin{defn}
A DG $A$-module $P$ is called {\em projective} if the complex $\op{Hom}_{DG(A)}(P, M)$ has zero cohomology when the cohomology $H(M)$ of $(M, d_M) \in Ob(DG(A))$ is zero.
\end{defn}

\begin{rmk} The DG $\rn$-module $P(\mf{x})=\rn e(\mf{x})$ is projective since it is a direct summand of $\rn$ which is projective \cite[Remark 10.12.2.3]{BL}.
\end{rmk}

There are two automorphisms of $DG(A)$: $[1]$ and $\{1\}$ with respect to the  cohomological grading and the $q$-grading.
There is another operation, called the {\em mapping cone}, which constructs a new object $C(f)$ from $f \in Z^{0}(\op{Hom}_{DG(A)}(M, N))$
\begin{defn} [Two shift functors and $C(f)$]
\n \be
\item The shift functor $[1]: DG(A) \ra DG(A)$ is an automorphism of $DG(A)$ such that,
    $$(M[1])^{i,j}=M^{i+1,j}, ~~~d_{M[1]}=d_M.$$
\item The shift functor $\{1\}: DG(A) \ra DG(A)$ is an automorphism of $DG(A)$ such that,
    $$(M\{1\})^{i,j}=M^{i,j+1}, ~~~d_{M\{1\}}=d_M.$$
\item For $f \in Z^{0}(\op{Hom}_{DG(A)}(M, N))$, $M, N \in Ob(DG(A))$, define the mapping cone $C(f)=N\oplus M[1]$ with the differential $d_{C(f)}=(d_N + f, d_M)$.
\ee
\end{defn}

\begin{defn}
Let $DGP(\rn)$ be the smallest full subcategory of $DG(\rn)$ which contains the projective DG $\rn$-modules $\{P(\mf{x}) ~|~ \mf{x} \in V(\gn)\}$ and is closed under the two shift functors $[1], \{1\}$ and taking the mapping cones.
\end{defn}

In other words, the objects of $DGP(\rn)$ are finitely iterated cones of closed morphisms between the projective modules $\{P(\mf{x}) ~|~ \mf{x} \in V(\gn)\}$ up to grading shifts.
Since $\{P(\mf{x}) ~|~ \mf{x} \in \gn \}$ form a complete set of non-isomorphic indecomposable projective $\rn$-modules up to grading shifts, the $0$-th homology category $H^0(DGP(\rn))$ is equivalent to $\mf{K}^{b}(\rn)$ as triangulated categories.
Hence their Grothendieck groups are isomorphic:
$$K_0(H^0(DGP(\rn))) \cong K_0(\mf{K}^{b}(\rn)) = K_0(\rn).$$

\subsection{The $q$-graded DG algebra $\drnn$}

\subsubsection{The $q$-graded algebra $\rnn$}
To categorify $\op{m}_n: K_0(\rn) \otimes_{\Z[q^{\pm1}]} K_0(\rn) \rightarrow K_0(\rn)$, we look for a category whose Grothendieck group is $K_0(\rn) \otimes_{\Z[q^{\pm1}]} K_0(\rn)$.

\begin{defn} [$q$-graded algebra $\rnn$]
As an $\F$-algebra, $\rn \otimes_{\F} \rn$ is the tensor product of two $\rn$'s over $\F$ with unit
$$\mathbf{1}_{\rn \otimes_{\F} \rn}=\sum \limits_{\mf{x},\mf{y} \in V(\gn)} e(\mf{x}) \otimes e(\mf{y}).$$
The $q$-grading on generators is given as $\op{deg}_{\rn \otimes_{\F} \rn}(a \otimes b) = \op{deg}_{\rn}(a) + \op{deg}_{\rn}(b)$.
\end{defn}

For simplicity, we omit ground rings or fields in various tensor products.
For instance, we write $\rn \otimes \rn$ for $\rn \otimes_{\F} \rn$ and $K_0(\rn) \otimes K_0(\rn)$ for $K_0(\rn) \otimes_{\Z[q^{\pm1}]} K_0(\rn)$.

Since $\{e(\mf{x}) \otimes e(\mf{y}) ~|~ \mf{x},\mf{y} \in V(\gn)\}$ is a complete set of primitive orthogonal idempotents of $\rnn$, the modules $P'(\mf{x,y})=(\rnn) (e(\mf{x}) \otimes e(\mf{y}))=\rn e(\mf{x}) \otimes \rn e(\mf{y})$ form a complete set of non-isomorphic indecomposable projective graded left $\rnn$-modules, up to grading shifts.
Consider $\mf{K}^{b}(\rnn)$, the homotopy category of bounded cochain complexes of finitely generated projective graded modules over $\rnn$ with grading-preserving differentials and chain maps.
Let $K_0(\rnn)$ be the Grothendieck group of $\mf{K}^{b}(\rnn)$.
It is easy to see that $K_0(\rnn)$ is a free $\Z [q^{\pm1}]$-module over $\{[P'(\mf{x,y})]~|~ \mf{x}, \mf{y} \in V(\gn)\}$, i.e.,
$$K_0(\rnn) = \Z [q^{\pm1}]\lan V(\gn) \times V(\gn)\ran = \Z [q^{\pm1}]\lan V(\gn)\ran \otimes \Z [q^{\pm1}]\lan V(\gn)\ran.$$
Hence we have
$K_0(\rnn)=K_0(\rn) \otimes K_0(\rn).$

\subsubsection{The $q$-graded DG-algebra $\drnn$}
We construct a family of quivers $\gnn$ viewed as a variant of the ``product" of two $\gn$'s.
\begin{defn} [Quiver $\gnn=(V(\gnn), A(\gnn))$]
\n \be
\item $V(\gnn) = V(\gn) \times V(\gn)$.

\item Let $A(\gnn)$ be the subset of
$$V(\gnn) \times V(\gnn)=(V(\gn) \times V(\gn)) \times (V(\gn) \times V(\gn)),$$
where $(\mf{x}, \mf{y}, \mf{x'}, \mf{y'}) \in A(\gnn)$ if one of following holds:
\be
\item $(\mf{x}, \mf{x'}) \in A(\gn)$ and $\mf{y}=\mf{y'}$;
\item $(\mf{y}, \mf{y'}) \in A(\gn)$ and $\mf{x}=\mf{x'}$;
\item $(\mf{x}, \mf{x'}), (\mf{y}, \mf{y'}) \in A(\gn)$ and there exist some $s \in \{0,1...,n-1\}$ such that the corresponding arrows are $(\mf{x} \stackrel{s}\to \mf{x'})$ and $(\mf{y} \stackrel{s+1}\to \mf{y'})$
\ee
We denote the arrows for (a), (b) and (c) by $(\mf{x},\mf{y} \xrightarrow{s,\es} \mf{x'},\mf{y})$,
$(\mf{x},\mf{y} \xra{\es, s} \mf{x},\mf{y'})$ and
$(\mf{x},\mf{y} \xra{s, s+1} \mf{x'},\mf{y'})$ respectively.
\ee
\end{defn}

\begin{example} [Quiver $\Gamma_2 \boxtimes \Gamma_2$]
One component of $\Gamma_2 \boxtimes \Gamma_2$ with the arrow $([\es],[\es] \xra{0, 1} [1,0],[2,1])$ is shown in Fig \ref{quiver'}.

\begin{figure}[h]
\begin{overpic}
[scale=0.3]{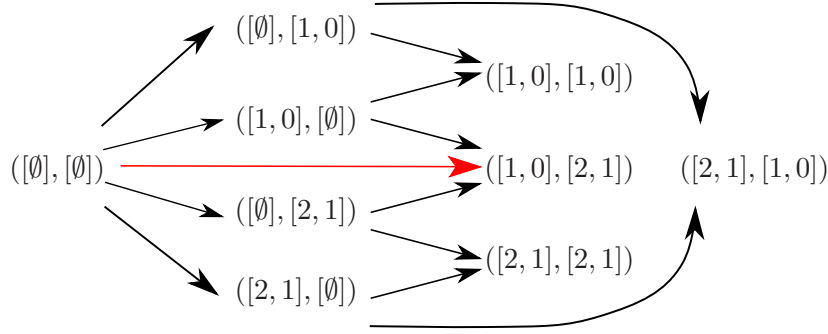}
\put(-15,25){$\tiny{([\es], [\es])}$}
\put(22,48){$\tiny{([\es], [1,0])}$}
\put(22,33){$\tiny{([1,0], [\es])}$}
\put(22,18){$\tiny{([\es], [2,1])}$}
\put(22,5){$\tiny{([2,1], [\es])}$}
\put(63,40){$\tiny{([1,0], [1,0])}$}
\put(63,25){$\tiny{([1,0], [2,1])}$}
\put(63,10){$\tiny{([2,1], [2,1])}$}
\put(95,25){$\tiny{([2,1], [1,0])}$}
\end{overpic}
\caption{One component of the quiver $\Gamma_2 \boxtimes \Gamma_2$.}
\label{quiver'}
\end{figure}
\end{example}

The algebra $\drnn$ is obtained by adding a differential to some quotient of the path algebra $\F(\gnn)$ of the quiver $\gnn$.

\begin{defn} [$q$-graded DG algebra $\drnn$] \label{DG algebra}
$(\drnn, d)$ is an associative $q$-graded DG algebra with a differential $d$, a cohomological grading and a $q$-grading.

\noindent (A) $\drnn$ has generators $r(\mf{x},\mf{y} \xra{s,t} \mf{x'},\mf{y'})$ for each arrow $(\mf{x},\mf{y} \xra{s,t} \mf{x'},\mf{y'})$ in $\gnn$, here $s$ or $t$ maybe $\es$, idempotents $e(\mf{x},\mf{y})$ for each vertex $(\mf{x},\mf{y})$ in $\gnn$ and relations:
\begin{gather}
e(\mf{x},\mf{y}) \cdot e(\mf{x'},\mf{y'})=\delta_{\mf{x},\mf{x'}}\cdot \delta_{\mf{y},\mf{y'}}\cdot e(\mf{x},\mf{y});\\
e(\mf{x},\mf{y}) \cdot r(\mf{x},\mf{y} \xra{s,t} \mf{x'},\mf{y'})= r(\mf{x},\mf{y} \xra{s,t} \mf{x'},\mf{y'}) \cdot e(\mf{x'},\mf{y'})= r(\mf{x},\mf{y} \xra{s,t} \mf{x},\mf{y'});\\
r(\mf{x},\mf{y} \xra{\es,s} \mf{x},\mf{y'})\cdot r(\mf{x},\mf{y'} \xra{\es,t} \mf{x},\mf{y'''})=
r(\mf{x},\mf{y} \xra{\es,t} \mf{x},\mf{y''})\cdot r(\mf{x},\mf{y''} \xra{\es,s} \mf{x},\mf{y'''});\\
r(\mf{x},\mf{y} \xra{s,\es} \mf{x'},\mf{y})\cdot r(\mf{x'},\mf{y} \xra{t,\es} \mf{x'''},\mf{y})=
r(\mf{x},\mf{y} \xra{t,\es} \mf{x''},\mf{y})\cdot r(\mf{x''},\mf{y} \xra{s,\es} \mf{x'''},\mf{y}); \\
r(\mf{x},\mf{y} \xra{s,\es} \mf{x'},\mf{y})\cdot r(\mf{x'},\mf{y} \xra{\es,t} \mf{x'},\mf{y'})=
r(\mf{x},\mf{y} \xra{\es,t} \mf{x},\mf{y'})\cdot r(\mf{x},\mf{y'} \xra{s,\es} \mf{x'},\mf{y'}) ~~\mbox{if}~~t\neq s+1; \\
r(\mf{x},\mf{y} \xra{s,s+1} \mf{x'},\mf{y'})\cdot r(\mf{x'},\mf{y'} \xra{\es,t} \mf{x'},\mf{y'''})=
r(\mf{x},\mf{y} \xra{\es,t} \mf{x},\mf{y''})\cdot r(\mf{x},\mf{y''} \xra{s,s+1} \mf{x'},\mf{y'''}); \\
r(\mf{x},\mf{y} \xra{s,s+1} \mf{x'},\mf{y'})\cdot r(\mf{x'},\mf{y'} \xra{t, \es} \mf{x'''},\mf{y'})=
r(\mf{x},\mf{y} \xra{t, \es} \mf{x''},\mf{y})\cdot r(\mf{x''},\mf{y} \xra{s,s+1} \mf{x'''},\mf{y'}); \\
r(\mf{x},\mf{y} \xra{s,s+1} \mf{x'},\mf{y'})\cdot r(\mf{x'},\mf{y'} \xra{t,t+1} \mf{x'''},\mf{y'''})=
r(\mf{x},\mf{y} \xra{t,t+1} \mf{x''},\mf{y''})\cdot r(\mf{x''},\mf{y''} \xra{s,s+1} \mf{x'''},\mf{y'''}).
\end{gather}
The unit of $\drnn$ is $\mathbf{1}_{\drnn}=\sum \limits_{(\mf{x},\mf{y}) \in V(\gnn)} e(\mf{x}, \mf{y})$.

\vspace{.1cm}
\noindent (B) The differential $d$ is given on the generators as:
$$
d(r(\mf{x},\mf{y} \xra{s,t} \mf{x'},\mf{y'}))=\left\{
\begin{array}{cl}
r(\mf{x},\mf{y} \xra{s,\es} \mf{x'},\mf{y})\cdot r(\mf{x'},\mf{y} \xra{\es,s+1} \mf{x'},\mf{y'})& \\
+
r(\mf{x},\mf{y} \xra{\es,s+1} \mf{x},\mf{y'})\cdot r(\mf{x},\mf{y'} \xra{s,\es} \mf{x'},\mf{y'}) & ~~\mbox{if}~~ t=s+1,\\
0 & ~~\mbox{otherwise};
\end{array}\right.
$$
and extended by $d(r_1\cdot r_2)=d(r_1)\cdot r_2+r_1\cdot d(r_2)$ for generators $r_1, r_2$.

\vspace{.1cm}
\noindent (C) The cohomological grading $\op{gr}$ is given on generators as: $\op{gr}(r(\mf{x},\mf{y} \xra{s,s+1} \mf{x'},\mf{y'}))=-1$ and $\op{gr}(r(\mf{x},\mf{y} \xra{s,t} \mf{x'},\mf{y'}))=0$ otherwise.

\vspace{.1cm}
\noindent (D) The $q$-grading $\op{deg}$ is given on generators as: $\op{deg}(r(\mf{x},\mf{y} \xra{s,t} \mf{x'},\mf{y'}))=(n-1-2s)+(n-1-2t)$ and $\op{deg}(e(\mf{x}, \mf{y}))=0$.
\end{defn}

\begin{rmk}
Relations (1)--(5) of $\drnn$ are analogous to those of $\rnn$ if we identify $e(\mf{x},\mf{y}), r(\mf{x},\mf{y} \xra{\es,s} \mf{x},\mf{y'}), r(\mf{x},\mf{y} \xra{t,\es} \mf{x'},\mf{y})$ with $e(\mf{x})\ot e(\mf{y}), e(\mf{x})\ot r(\mf{y} \xra{s} \mf{y'}), r(\mf{x} \xra{t} \mf{x'}) \ot e(\mf{y})$.
\end{rmk}

\begin{lemma} $d$ is a differential on $\drnn$.
\end{lemma}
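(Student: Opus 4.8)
The plan is to check two things: that the formula of Definition~\ref{DG algebra}(B) is compatible with the defining relations of $\drnn$, so that it descends to a well-defined additive operator on the quotient, and that this operator satisfies the axioms of a differential on a $q$-graded DG algebra, namely $d(\mf{1}_{\drnn})=0$, $d$ has bidegree $(1,0)$, and $d^2=0$ (the Leibniz rule being built into the extension). A useful simplification is that the ground field $\F=\mathbb{F}_2$, so there are no Koszul signs to carry: the ungraded Leibniz rule $d(ab)=d(a)b+a\,d(b)$ is internally consistent, and $d^2$ is automatically a derivation because the cross term $2\,d(a)d(b)$ vanishes. The key structural observation is that $d$ is supported only on the type-(c) generators $r(\mf{x},\mf{y}\xra{s,s+1}\mf{x'},\mf{y'})$, and there it outputs a sum of two length-two paths, each a product of one type-(a) and one type-(b) arrow.

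The elementary checks come first. Since $d$ annihilates every idempotent, $d(\mf{1}_{\drnn})=0$. Comparing degrees, the two length-two paths in $d(r(\xra{s,s+1}))$ have cohomological degree $0=-1+1$ and $q$-degree $(n-1-2s)+(n-1-2(s+1))=\op{deg}(r(\xra{s,s+1}))$, so $d$ has bidegree $(1,0)$. For $d^2=0$: since $d^2$ is a derivation it is determined by its values on generators, where it obviously vanishes on idempotents and on type-(a) and type-(b) arrows; on a type-(c) arrow it is $d$ applied to a sum of products of $d$-closed arrows, which is zero termwise by the Leibniz rule. For well-definedness it suffices to show $d$ takes the same value on both sides of each defining relation (1)--(8). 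Relations (1)--(5) are immediate, since every generator occurring in them is killed by $d$.

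The substance is in relations (6), (7) and (8), each of which involves a type-(c) generator. For (6) and (7), expanding $d$ of either side by the Leibniz rule leaves a single type-(b) (respectively type-(a)) arrow multiplied onto the two length-two paths produced by $d(r(\xra{s,s+1}))$; the index conditions that make the relation non-vacuous (disjointness of the adjacent pairs involved) force the relevant arrow labels to be far enough apart that relations (3), (4) and (5) apply, and sliding the lower-degree arrow through these paths matches the two sides. Relation (8), with two type-(c) generators, then follows formally: $d$ of each side splits into two pieces, and relations (6) and (7) let one commute each type-(c) arrow past the length-two path appearing next to it, identifying the two expressions. I expect the verification of (6) and (7) to be the main obstacle --- not conceptually, but as the one place where one must be careful with source and target vertices and with which arrow labels differ by what; once they are in hand, (8) and everything else is routine, essentially because $d$ is concentrated on type-(c) arrows and we work in characteristic $2$.
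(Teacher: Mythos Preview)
Your proposal is correct and follows essentially the same approach as the paper: the paper's proof also reduces everything to checking that $d$ preserves relations (6), (7), (8), computes (6) explicitly (using relations (3) and (5) exactly as you anticipate, with the observation that the implicit index constraints force $t\neq s+1$), and leaves (7) and (8) to the reader. Your write-up is in fact more systematic than the paper's --- you spell out the bidegree check, the $d^2=0$ argument, and the reason relations (1)--(5) are trivial, and you note that (8) reduces to (6) and (7) via the Leibniz expansion, a point the paper does not make explicit.
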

\begin{proof}
It suffices to prove that $d$ preserves Relations (6), (7) and (8) in Definition \ref{DG algebra} (A).
We prove (6), leaving (7) and (8) to the reader.

Applying $d$ to the left-hand side of (6), we have:
\begin{align*}
& d(r(\mf{x},\mf{y} \xra{s,s+1} \mf{x'},\mf{y'})\cdot r(\mf{x'},\mf{y'} \xra{\es,t} \mf{x'},\mf{y'''})) \\
= & d(r(\mf{x},\mf{y} \xra{s,s+1} \mf{x'},\mf{y'}))\cdot r(\mf{x'},\mf{y'} \xra{\es,t} \mf{x'},\mf{y'''}) \\
= & r(\mf{x},\mf{y} \xra{s,\es} \mf{x'},\mf{y})\cdot r(\mf{x'},\mf{y} \xra{\es,s+1} \mf{x'},\mf{y'})\cdot r(\mf{x'},\mf{y'} \xra{\es,t} \mf{x'},\mf{y'''}) \\
& + r(\mf{x},\mf{y} \xra{\es,s+1} \mf{x},\mf{y'})\cdot r(\mf{x},\mf{y'} \xra{s,\es} \mf{x'},\mf{y'})\cdot r(\mf{x'},\mf{y'} \xra{\es,t} \mf{x'},\mf{y'''})\\
= & r(\mf{x},\mf{y} \xra{s,\es} \mf{x'},\mf{y})\cdot r(\mf{x'},\mf{y} \xra{\es,t} \mf{x'},\mf{y''})\cdot r(\mf{x'},\mf{y''} \xra{\es,s+1} \mf{x'},\mf{y'''}) \\
& + r(\mf{x},\mf{y} \xra{\es,s+1} \mf{x},\mf{y'})\cdot r(\mf{x},\mf{y'} \xra{s,\es} \mf{x'},\mf{y'})\cdot r(\mf{x'},\mf{y'} \xra{\es,t} \mf{x'},\mf{y'''})
\end{align*}
We use Relation (5) of commutativity:
\begin{gather*}
r(\mf{x},\mf{y} \xra{s,\es} \mf{x'},\mf{y})\cdot r(\mf{x'},\mf{y} \xra{\es,t} \mf{x'},\mf{y''})=
r(\mf{x},\mf{y} \xra{\es,t} \mf{x},\mf{y''})\cdot r(\mf{x},\mf{y''} \xra{s,\es} \mf{x'},\mf{y''}) \\
r(\mf{x},\mf{y'} \xra{s,\es} \mf{x'},\mf{y'})\cdot r(\mf{x'},\mf{y'} \xra{\es,t} \mf{x'},\mf{y'''})=
r(\mf{x},\mf{y'} \xra{\es,t} \mf{x},\mf{y'''})\cdot r(\mf{x},\mf{y'''} \xra{s,\es} \mf{x'},\mf{y'''})
\end{gather*}
since $t \neq s+1$. Then we have
\begin{align*}
& d(r(\mf{x},\mf{y} \xra{s,s+1} \mf{x'},\mf{y'})\cdot r(\mf{x'},\mf{y'} \xra{\es,t} \mf{x'},\mf{y'''})) \\
= & r(\mf{x},\mf{y} \xra{\es,t} \mf{x},\mf{y''})\cdot r(\mf{x},\mf{y''} \xra{s,\es} \mf{x'},\mf{y''})\cdot r(\mf{x'},\mf{y''} \xra{\es,s+1} \mf{x'},\mf{y'''}) \\
& + r(\mf{x},\mf{y} \xra{\es,s+1} \mf{x},\mf{y'})\cdot r(\mf{x},\mf{y'} \xra{\es,t} \mf{x},\mf{y'''})\cdot r(\mf{x},\mf{y'''} \xra{s,\es} \mf{x'},\mf{y'''})
\end{align*}
\begin{align*}
= & r(\mf{x},\mf{y} \xra{\es,t} \mf{x},\mf{y''})\cdot r(\mf{x},\mf{y''} \xra{s,\es} \mf{x'},\mf{y''})\cdot r(\mf{x'},\mf{y''} \xra{\es,s+1} \mf{x'},\mf{y'''}) \\
& + r(\mf{x},\mf{y} \xra{\es,t} \mf{x},\mf{y''})\cdot r(\mf{x},\mf{y''} \xra{\es,s+1} \mf{x},\mf{y'''})\cdot r(\mf{x},\mf{y'''} \xra{s,\es} \mf{x'},\mf{y'''})\\
= & r(\mf{x},\mf{y} \xra{\es,t} \mf{x},\mf{y''})\cdot d(r(\mf{x},\mf{y''} \xra{s,s+1} \mf{x'},\mf{y'''})) \\
= & d(r(\mf{x},\mf{y} \xra{\es,t} \mf{x},\mf{y''})\cdot r(\mf{x},\mf{y''} \xra{s,s+1} \mf{x'},\mf{y'''})),
\end{align*}
which is the differential of the right-hand side of (6).
\end{proof}

\begin{defn}
Let $DGP(\drnn)$ be the smallest full subcategory of $DG(\drnn)$ which contains the projective DG $\drnn$-modules $\{P(\mf{x}, \mf{y})=(\drnn)e(\mf{x}, \mf{y}) ~|~ \mf{x}, \mf{y} \in V(\gn)\}$ and is closed under the two shift functors $[1], \{1\}$ and taking the mapping cones.
\end{defn}

In order to understand the $0$-th homology category $H^0(DGP(\drnn))$, we look at the relation between $\drnn$ and its cohomology algebra $H(\drnn)$.
\begin{defn}
A $q$-graded DG algebra $A$ is {\em formal} if it is $q$-graded quasi-isomorphic to its cohomology $H(A)$.
\end{defn}

\begin{lemma} The $q$-graded DG algebra $\drnn$ is formal and its cohomology $H(\drnn)$ is isomorphic to $\rnn$.
\end{lemma}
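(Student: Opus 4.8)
The plan is to exhibit an explicit $q$-graded quasi-isomorphism between $\drnn$ and $\rnn$, where $\rnn$ is regarded as a DG algebra with zero differential concentrated in cohomological degree $0$. By part (C) of Definition \ref{DG algebra}, $\drnn$ is concentrated in cohomological degrees $-1$ and $0$: the degree-$(-1)$ part is spanned by paths containing exactly one generator of the form $r(\mf{x},\mf{y}\xra{s,s+1}\mf{x'},\mf{y'})$, and the degree-$0$ part is spanned by paths built from the generators $r(\mf{x},\mf{y}\xra{\es,s}\cdot)$ and $r(\mf{x},\mf{y}\xra{t,\es}\cdot)$ only. (Because the differential raises cohomological degree by $1$, there is nothing in degree $-2$ or lower that could hit degree $-1$, so the only cohomology to compute is $H^{-1}=\ker d$ and $H^0=\op{coker} d$.) Thus the first step is to write down the natural surjection $\pi:(\drnn)^0 \ra \rnn$ sending $e(\mf{x},\mf{y})\mapsto e(\mf{x})\ot e(\mf{y})$, $r(\mf{x},\mf{y}\xra{\es,s}\mf{x},\mf{y'})\mapsto e(\mf{x})\ot r(\mf{y}\xra{s}\mf{y'})$, and $r(\mf{x},\mf{y}\xra{t,\es}\mf{x'},\mf{y})\mapsto r(\mf{x}\xra{t}\mf{x'})\ot e(\mf{y})$, and to check it is an algebra map: Relations (1)--(5) of Definition \ref{DG algebra}(A) map precisely to the defining relations of $\rnn = \rn\ot_\F \rn$ (this is exactly the content of the Remark following the definition), and the $q$-gradings match by part (D) together with $\op{deg}_{\rn}(r(\mf{x}\xra{s}\mf{y}))=n-1-2s$.

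The main work is to prove $H(\drnn)\cong\rnn$, i.e. that $\pi$ descends to an isomorphism $H^0(\drnn)\xra{\sim}\rnn$ and that $H^{-1}(\drnn)=0$. For surjectivity of the induced map this is immediate since $\pi$ is already onto. For injectivity I would argue that the image of $d:(\drnn)^{-1}\ra(\drnn)^0$ is exactly the kernel of $\pi$: the formula in part (B) shows $d(r(\mf{x},\mf{y}\xra{s,s+1}\mf{x'},\mf{y'}))$ equals the "commutator-type" element $r(\xra{s,\es})r(\xra{\es,s+1}) + r(\xra{\es,s+1})r(\xra{s,\es})$ (recall $\F$ has characteristic $2$, so signs are invisible), which is precisely the obstruction to the $\xra{t,\es}$ and $\xra{\es,s}$ generators commuting when $t=s+1$; Relation (5) already imposes commutativity when $t\ne s+1$, so modulo the image of $d$ the degree-$0$ algebra becomes the honest tensor product $\rnn$. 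To make this rigorous I would pick a monomial ($\F$-)basis of $(\drnn)^0$ adapted to a chosen normal form — e.g. push all $\xra{t,\es}$ letters to the left of all $\xra{\es,s}$ letters using Relation (5) whenever legal, recording the failures — and a matching basis of $(\drnn)^{-1}$ in which the single $\xra{s,s+1}$ letter is placed in a canonical position; then $d$ sends basis elements to (sums of two) basis elements in a way that is visibly "triangular", so that $\ker d = 0$ and $\op{im} d$ is spanned by the normal-form failures. This gives $H^{-1}=0$ and $H^0\cong\rnn$ simultaneously.

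Finally, formality: since $\drnn$ has cohomology concentrated in a single cohomological degree, I would produce the quasi-isomorphism by choosing an $\F$-linear (in fact $q$-graded) splitting $\sigma:\rnn\ra Z^0(\drnn)=\ker\left(d|_{(\drnn)^0}\right)$ of $\pi$ — note $(\drnn)^0$ carries zero differential as a complex in its own right only after we restrict to cocycles, but here every degree-$0$ element is automatically closed because there is nothing in degree $1$, so $Z^0(\drnn)=(\drnn)^0$ — wait, more carefully: every element of $(\drnn)^0$ is a cocycle, and $(\drnn)^0$ is a subalgebra, so the inclusion $(\drnn)^0\hookrightarrow\drnn$ is a DG algebra map inducing an iso on $H^0$ and (trivially) on $H^{-1}$; composing with the quotient $(\drnn)^0\twoheadrightarrow(\drnn)^0/\op{im}\,d\cong\rnn$ is not an algebra map, so instead I invoke the standard fact that a nonnegatively-or-single-degree-supported DG algebra whose cohomology sits in one degree is formal, realized here by the zig-zag $\rnn \xhookleftarrow{} (\drnn)^0 \hookrightarrow \drnn$ where the left arrow is the quotient by $\op{im}\,d$ — this is a surjective algebra map with kernel an acyclic ideal, hence a quasi-isomorphism — and the right arrow is an injective algebra quasi-isomorphism by the degree count above. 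The hard part is the bookkeeping in the middle paragraph: controlling the normal form of monomials in $\drnn$ carefully enough to pin down $\op{im}\,d$ on the nose, since the three "mixed" commutation relations (5)--(7) and the pure relations (3), (4), (8) interact, and one must be sure the chosen ordering of letters is consistent (a diamond-lemma / confluence check). Everything else is a direct comparison of generators and relations.
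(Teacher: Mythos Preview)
Your degree count is wrong, and this breaks the formality argument. The cohomological grading on $\drnn$ is additive under multiplication, so a path containing $k$ generators of type $r(\mf{x},\mf{y}\xra{s,s+1}\mf{x'},\mf{y'})$ lives in degree $-k$, not $-1$. For $n\geq 4$ such paths exist with $k\geq 2$: e.g.\ $([\es],[\es])\xra{0,1}([1,0],[2,1])\xra{2,3}([3,2,1,0],[4,3,2,1])$ has degree $-2$. So $\drnn$ is concentrated in degrees $\leq 0$, not in $\{-1,0\}$, and you must show $H^k=0$ for \emph{all} $k<0$.

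Your zig-zag $\rnn \leftarrow (\drnn)^0 \hookrightarrow \drnn$ then fails on both legs. The right arrow is not a quasi-isomorphism: on $H^0$ it is the map $(\drnn)^0 \to (\drnn)^0/\op{im}(d)$, which is a proper quotient, not an isomorphism (and even granting your incorrect degree count you would still have this problem). On the left leg, the kernel $\op{im}(d)\subset(\drnn)^0$ is concentrated in degree $0$ with zero differential, so it is not an acyclic ideal; the quotient map is an algebra map but not a quasi-isomorphism of complexes in any useful sense here.

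The paper's approach avoids all of this by producing a \emph{direct} DGA map $H:\drnn\to\rnn$, namely your $\pi$ on degree-$0$ generators together with $r(\mf{x},\mf{y}\xra{s,s+1}\mf{x'},\mf{y'})\mapsto 0$. One checks this respects all the relations (Relations (6)--(8) hold trivially since both sides map to $0$; for the differential, $H\circ d=0$ because the image of $d$ is a sum of two terms that become equal in the honest tensor product $\rnn$). Since $\drnn$ sits in nonpositive degrees and $\rnn$ in degree $0$, the only content of ``$H$ is a quasi-isomorphism'' is exactly the cohomology computation $H^\ast(\drnn)\cong\rnn$, which the paper simply asserts. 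Your normal-form/diamond-lemma idea is a reasonable way to justify that assertion, but it must be carried out in every negative degree, not just degree $-1$.
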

\proof
It is easy to see that the cohomology $H(\drnn)$ is isomorphic to $\rnn$.
We define a quasi-isomorphism $H:  \drnn  \ra  \rnn$ as follows:
$$
\begin{array}{cccr}
 e(\mf{x},\mf{y}) & \mapsto & e(\mf{x}) \otimes e(\mf{y})& \\
   r(\mf{x},\mf{y} \xra{\es,s} \mf{x},\mf{y'}) & \mapsto & e(\mf{x}) \otimes r(\mf{y}\xra{s} \mf{y'})& \\
   r(\mf{x},\mf{y} \xra{s,\es} \mf{x'},\mf{y}) & \mapsto & r(\mf{x}\xra{s} \mf{x'}) \otimes e(\mf{y})& \\
   r(\mf{x},\mf{y} \xra{s,s+1} \mf{x'},\mf{y'}) & \mapsto & 0 &\qed
\end{array} $$

We also view $\rnn$ as a $q$-graded DG algebra $(\rnn, d=0)$ which has trivial differential and is concentrated in cohomological grading $0$.
\begin{defn}
Let $DGP(\rnn)$ be the smallest full subcategory of $DG(\rnn)$ which contains the projective DG $\rnn$-modules $\{P'(\mf{x}, \mf{y}) ~|~ \mf{x}, \mf{y} \in V(\gn)\}$ and is closed under the two shift functors $[1], \{1\}$ and taking the mapping cones.
\end{defn}

Let $Z^0(DG(A))$ be an abelian category with the same objects as $DG(A)$, whose morphisms are given by
$$\op{Hom}_{Z^0(DG(A))}(M, N)=Z^{0}(\op{Hom}_{DG(A)}(M, N)).$$
Consider the homotopy category $KDG(A)$ and derived category $DDG(A)$ of $Z^0(DG(A))$.
They are triangulated categories.
Let $KPDG(A)$ be the full subcategory of $KDG(A)$ consisting of projective $q$-graded DG $A$-modules.
The localization functor induces an equivalence: $KPDG(A) \ra DDG(A)$ \cite[Corollary 10.12.2.9]{BL}.

For any quasi-isomorphism $F: A \ra B$ of DG algebras, the derived induction functor $ind=B\bigotimes\limits^{L}$$_A-: DDG(A) \ra DDG(B)$ is an equivalence of categories \cite[Theorem 10.12.5.1]{BL}.
The induced functor $ind$ on $KPDG(A)$ is the induction functor $B\otimes_A-$.
It maps any projective DG $A$-module $P$ to a projective DG $B$-module $B \otimes_A P$ since $$\op{Hom}_{DG(B)}(B\otimes_A P, N)\cong \op{Hom}_{DG(A)}(P, Res(N))$$
has zero cohomology for any DG $B$-module $N$ with $H(N)=0$, where $Res(N)$ is the restriction of $N$ as a $A$-module.
Hence we have an equivalence $B\otimes_A-: KPDG(A) \ra KPDG(B)$.
In particular, $ind_n: KPDG(\drnn) \ra KPDG(\rnn)$ is an equivalence since $\drnn$ is quasi-isomorphic to $\rnn$.
We have the following equivalence for their subcategories:
\begin{lemma} \label{equivalence}
The $0$-th homology category $H^0(DGP(\drnn))$ is equivalent to the $0$-th homology category $H^0(DGP(\rnn))$.
\end{lemma}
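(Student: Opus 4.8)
The plan is to show that the equivalence $ind_n \colon KPDG(\drnn) \to KPDG(\rnn)$ recalled just above restricts to an equivalence between the full subcategories $H^0(DGP(\drnn))$ and $H^0(DGP(\rnn))$. The first point to record is that these two $H^0$-categories genuinely sit inside $KPDG(\drnn)$ and $KPDG(\rnn)$ as full subcategories: the generators $P(\mf{x},\mf{y})$ and $P'(\mf{x},\mf{y})$ are projective DG modules, and both shift functors and the mapping cone preserve projectivity, so every object of $DGP(\drnn)$ (resp.\ $DGP(\rnn)$) is a projective DG module. On $KPDG(\drnn)$ the equivalence $ind_n$ is just the induction functor $\rnn \otimes_{\drnn} -$.

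Next I would check that $ind_n$ carries $DGP(\drnn)$ into $DGP(\rnn)$. Using the explicit quasi-isomorphism $H \colon \drnn \to \rnn$ with $H(e(\mf{x},\mf{y})) = e(\mf{x}) \ot e(\mf{y})$, one computes $\rnn \otimes_{\drnn} P(\mf{x},\mf{y}) = (\rnn)(e(\mf{x}) \ot e(\mf{y})) = P'(\mf{x},\mf{y})$, so $ind_n$ sends generators to generators. Since $\rnn \otimes_{\drnn} -$ is additive and commutes with differentials, it commutes strictly with $[1]$, with $\{1\}$, and with the mapping cone, $\rnn \otimes_{\drnn} C(f) = C(\rnn \otimes_{\drnn} f)$. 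As any object of $DGP(\drnn)$ is a finitely iterated cone of closed morphisms between shifts of the $P(\mf{x},\mf{y})$, its image is a finitely iterated cone of closed morphisms between shifts of the $P'(\mf{x},\mf{y})$, hence lies in $DGP(\rnn)$. Passing to $H^0$ we obtain a functor $\Phi \colon H^0(DGP(\drnn)) \to H^0(DGP(\rnn))$, which is fully faithful because it is the restriction to full subcategories of the fully faithful functor $ind_n$.

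It remains to prove $\Phi$ is essentially surjective. Let $\mathcal{C}$ be the class of objects of $DG(\rnn)$ homotopy equivalent to $\Phi(M)$ for some $M$ in $DGP(\drnn)$. Then $\mathcal{C}$ contains every $P'(\mf{x},\mf{y})$ and is closed under $[1]$ and $\{1\}$. For closure under the mapping cone, given $N, N' \in \mathcal{C}$ and a closed degree-zero morphism $f \colon N \to N'$, I would transport $f$ along the chosen homotopy equivalences to a morphism $\Phi(M) \to \Phi(M')$, lift it up to homotopy to some $\Phi(g)$ with $g$ closed of degree zero using full faithfulness of $\Phi$, and then invoke $C(\Phi g) = \Phi(C(g))$ together with the standard fact that cones of morphisms agreeing up to homotopy through homotopy equivalences are homotopy equivalent, to conclude $C(f) \in \mathcal{C}$. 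By the minimality built into the definition of $DGP(\rnn)$, $\mathcal{C}$ then contains all objects of $DGP(\rnn)$, so $\Phi$ is essentially surjective and hence an equivalence. The step demanding the most care is this last closure of $\mathcal{C}$ under mapping cones, where one must track the difference between strict equalities of cones and mere homotopy equivalences; everything else is a direct transcription of the formal properties of $ind_n$ together with the explicit formula for $H$.
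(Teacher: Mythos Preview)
Your proposal is correct and follows essentially the same route as the paper: restrict the ambient equivalence $ind_n\colon KPDG(\drnn)\to KPDG(\rnn)$ to the full subcategories generated by the $P(\mf{x},\mf{y})$ and $P'(\mf{x},\mf{y})$, obtain full faithfulness from the ambient equivalence, and deduce essential surjectivity from the iterated-cone description of objects. You supply more detail than the paper does, particularly in verifying that $ind_n$ commutes with shifts and cones and in the closure-under-cones step for essential surjectivity, which the paper dispatches in a single sentence.
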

\begin{proof}
Notice that $ind_n(P(\mf{x}, \mf{y}))=P'(\mf{x}, \mf{y})$ and $H^0(DGP(\drnn))$ is a full subcategory of $KPDG(\drnn)$.
We have a restriction of $ind_n$: $H^0(DGP(\drnn)) \ra H^0(DGP(\rnn))$.
It is fully faithful since $ind_n: KPDG(\drnn) \ra KPDG(\rnn)$ is an equivalence.

Any object $N$ in $H^0(DGP(\rnn))$ is a finitely iterated cone of maps between $P'(\mf{x}, \mf{y})$'s, hence it is isomorphic to an object $ind_n(M)$ for some $M$ in $H^0(DGP(\drnn))$.
Therefore, the restriction of $ind_n: H^0(DGP(\drnn)) \ra H^0(DGP(\rnn))$ induces an equivalence of categories.
\end{proof}

Since $\{P'(\mf{x}, \mf{y}) ~|~ \mf{x}, \mf{y} \in \gn \}$ form a complete set of non-isomorphic indecomposable projective $\rnn$-modules up to grading shifts, the $0$-th homology category $H^0(DGP(\rnn))$ is equivalent to $\mf{K}^{b}(\rnn)$.
\begin{cor}
There are isomorphisms of the Grothendieck groups:
$$K_0(H^0(DGP(\drnn))) \cong K_0(H^0(DGP(\rnn))) \cong K_0(\mf{K}^{b}(\rnn)) = K_0(\rnn).$$
\end{cor}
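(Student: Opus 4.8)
The plan is to obtain the displayed chain of isomorphisms by concatenating the three comparisons already prepared in the preceding paragraphs, using only the elementary fact that an exact equivalence of ($\Z[q^{\pm1}]$-linearly enhanced) triangulated categories induces an isomorphism on Grothendieck groups: such a functor carries the two shift functors to the two shift functors and distinguished triangles to distinguished triangles, hence descends to a bijective homomorphism between the groups presented by generators (classes of objects) and relations (from triangles and from the shifts $[1]$ and $\{1\}$).

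For the first isomorphism $K_0(H^0(DGP(\drnn))) \cong K_0(H^0(DGP(\rnn)))$ I would simply invoke Lemma~\ref{equivalence}, which already supplies an equivalence of these two $0$-th homology categories via the restriction of the induction functor $ind_n$. For the second isomorphism, $K_0(H^0(DGP(\rnn))) \cong K_0(\mf{K}^{b}(\rnn))$, I would use the equivalence of triangulated categories $H^0(DGP(\rnn)) \simeq \mf{K}^{b}(\rnn)$ recorded immediately before the statement; this is the verbatim analogue of the argument given earlier for $\rn$, and rests on the fact that $\{P'(\mf{x},\mf{y}) \mid \mf{x},\mf{y}\in V(\gn)\}$ is a complete set of pairwise non-isomorphic indecomposable projective $\rnn$-modules up to grading shift, so that the objects of $DGP(\rnn)$ — finitely iterated cones of closed maps between these modules, up to shifts — are precisely bounded complexes of finitely generated projective $\rnn$-modules up to homotopy. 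The final equality $K_0(\mf{K}^{b}(\rnn)) = K_0(\rnn)$ holds by the very definition of $K_0(\rnn)$ as the Grothendieck group of $\mf{K}^{b}(\rnn)$. Combining this with the earlier identification $K_0(\rnn)=K_0(\rn)\otimes K_0(\rn)$, obtained from $P'(\mf{x},\mf{y})=\rn e(\mf{x})\ot \rn e(\mf{y})$ and freeness of $K_0$ on the indecomposable projectives, exhibits $H^0(DGP(\drnn))$ as a triangulated category whose Grothendieck group is exactly the source $K_0(\rn)\otimes K_0(\rn)$ of the multiplication map $\op{m}_n$.

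There is essentially no obstacle remaining at this stage: the substantive content is Lemma~\ref{equivalence}, which itself rests on formality of $\drnn$ (the explicit quasi-isomorphism $H\colon\drnn\to\rnn$) together with the standard fact that a quasi-isomorphism of DG algebras induces an equivalence of homotopy categories of projective DG modules, compatibly with passage to the subcategories $DGP$. Given those inputs, the corollary is a purely formal assembly; the one point worth double-checking is that each equivalence involved is genuinely triangulated (commutes with $[1]$) and $q$-linear (commutes with $\{1\}$), which is clear from their construction as restrictions of induction functors and from the explicit generating sets of projective modules $P(\mf{x},\mf{y})$ and $P'(\mf{x},\mf{y})$.
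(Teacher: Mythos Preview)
Your proposal is correct and follows exactly the route the paper intends: the corollary is stated without proof because it is an immediate consequence of Lemma~\ref{equivalence}, the sentence immediately preceding it (that $H^0(DGP(\rnn))\simeq \mf{K}^{b}(\rnn)$), and the definition of $K_0(\rnn)$. Your additional remarks about exactness and $q$-linearity of the equivalences are fine but already implicit in the paper's setup.
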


\section{The multiplication on $K_0(\rn)$}
The goal of this section is to define the multiplication $\op{m}_n: K_0(\rnn) \rightarrow K_0(\rn)$ and show that it is associative.
We fix some $n>0$ throughout this and next sections and omit the subscript $n$.

We first construct a $\Z [q^{\pm1}, h^{\pm1}]$-bilinear map
$$\op{M}: \Z [q^{\pm1}, h^{\pm1}]\lan V(\g) \ran \times \Z [q^{\pm1}, h^{\pm1}]\lan V(\g) \ran \ra \Z [q^{\pm1}, h^{\pm1}]\lan V(\g) \ran,$$
where $h$ corresponds to the cohomological grading in $DGP(R)$ or $\mf{K}^{b}(R)$.
We call $\op{M}$ the {\em higher multiplication}.

\subsection{The higher multiplication $\op{M}$}

Given any pair of decreasing sequences $\mf{x}= [x_0,x_1,\cdots,x_i]$ and $\mf{y}= [y_0,y_1,\cdots,y_j] \in V(\g)$, their concatenation $[x_0,x_1,\cdots,x_i,y_0,y_1,\cdots,y_j]$ may not be decreasing.
The definition of $\op{M}(\mf{x},\mf{y})$ gives several rules for converting a non-decreasing sequence to a decreasing one.

\begin{defn} [Higher multiplication $\op{M}$, special cases] \label{example}
Define $\op{M}(\mf{x},\mf{y})$, where $\mf{x},\mf{y}$ are sequences of length at most $1$ as follows:
\be
\item $\op{M}([a], [b]) = [a, b]$ if $a>b$ and $\op{M}([\es], [a]) = \op{M}([a], [\es]) = [a]$.
\item $\op{M}([a], [a]) = 0$.\footnote{Note that $0 \in \Z [q^{\pm1}, h^{\pm1}]\lan V(\g) \ran$ is different from the sequence $[0] \in V(\g)$.}
\item $\op{M}([a], [b]) = h^{(-1)^{a+b+1}}[b, a]$ if $a<b-1$.
\item $\op{M}([a], [b]) = q^{2a+1-n}[\es]+h[b,a]$ if $a=b-1$.
\ee
\end{defn}
In the rest of this subsection we generalize Definition \ref{example}.
As in Definition \ref{example} (1), the multiplication is simply gluing two sequences if the concatenation is still decreasing. We have the following definition of a gluing map $G_k$ for $k$ elements in $V(\g)$.
\begin{defn} [Gluing map $G_k$]
Let $G_k: (\Z [q^{\pm1}, h^{\pm1}]\lan V(\g) \ran)^{\times k} \rightarrow \Z [q^{\pm1}, h^{\pm1}]\lan V(\g) \ran$
be a $\Z [q^{\pm1}, h^{\pm1}]$-multilinear map defined over the basis $V(\g)^{\times k}$ as follows:
$$ G_k(\mf{x^1}, \mf{x^2}, ..., \mf{x^k})= \left\{
\begin{array}{cl} \
[x^1_0, ..., x^1_{i_1}, x^2_0, ..., x^2_{i_2}, ..., x^k_0, ..., x^k_{i_k}] & \mbox{if} \hspace{0.2cm} x^j_{i_j}>x^{j+1}_0 \hspace{0.2cm} \mbox{for} \hspace{0.2cm} 1 \leq j \leq k-1, \\
 0 & \mbox{otherwise},
\end{array}\right.
$$
where $\mf{x^1}=[x^1_0, ..., x^1_{i_1}], \mf{x^2}=[x^2_0, ..., x^2_{i_2}], ..., \mf{x^k}=[x^k_0, ..., x^k_{i_k}]$.
Here we assume $x^j_{i_j}>x^{j+1}_0$ is always true when $\mf{x^j}$ or $\mf{x^{j+1}}$ is $[\es]$.
\end{defn}

\begin{rmk}
It is easy to see that the gluing map $G_2$ is associative, i.e., $$G_2(G_2(\mf{x}, \mf{y}), \mf{z})=G_2(\mf{x}, G_2(\mf{y}, \mf{z}));$$ and that $G_k$ is a composition of $G_{k-1}$ and $G_2$: $G_k(\mf{x^1}, \mf{x^2}, ..., \mf{x^k})=G_{k-1}(G_2(\mf{x^1}, \mf{x^2}), ..., \mf{x^k})$.
\end{rmk}

The cohomological shifting number $\mu(\mf{x}, \mf{y})$ counts the shift in $h$ as in Definition \ref{example} (3) when exchanging numbers in $\mf{x}$ and $\mf{y}$ with difference greater than 1.
\begin{defn} [Cohomological shifting $\mu$]
Define $\mu: V(\g) \times V(\g) \rightarrow \Z$ by
$$\mu(\mf{x}, \mf{y})= \sum \limits_{k=0}^{i}\sum \limits_{l=0}^{j} \mu(x_k, y_l)$$
for $\mf{x}=[x_0, ..., x_i]$ and $\mf{y}=[y_0, ..., y_j]$, where
$$ \mu(x_k, y_l)= \left\{
\begin{array}{cl} \ (-1)^{x_k+y_l+1} & \mbox{if} \hspace{0.3cm} x_k < y_l-1;
\\0 & \mbox{otherwise}.
\end{array}\right.
$$
\end{defn}

Adjacent increasing pairs are special for the multiplication as in Definition \ref{example} (4).
Let $p(\mf{x}, \mf{y})$ be the number of adjacent increasing pairs $\{s, s+1~|~ s \in \mf{x}, s+1 \in \mf{y}\}$ for $\mf{x}, \mf{y} \in V(\g)$.
Since all pairs are distinct, they can be ordered as $\{s_1, s_1+1\}, ..., \{s_{p(\mf{x}, \mf{y})}, s_{p(\mf{x}, \mf{y})}+1\}$ such that
$s_1 > ... > s_{p(\mf{x}, \mf{y})}$.
Let $s_i(\mf{x}, \mf{y})$ be the smaller number in the $i$-th pair for $\mf{x}, \mf{y}$.
We write $s_i$ for $s_i(\mf{x}, \mf{y})$ when there is no confusion.
The multiplication of an adjacent pair is given in Definition \ref{example} (4).
\begin{defn} [Sequence for adjacent pairs]
Define $\beta: \{0,1,...,n-1\} \rightarrow \Z [q^{\pm1}, h^{\pm1}]\lan V(\g) \ran$ by
$$\beta(s)=q^{2s+1-n}[\es]+h[s+1,s].$$
\end{defn}

Between two adjacent pairs $\{s_i, s_i+1\}, \{s_{i+1}, s_{i+1}+1\}$, let $\alpha_i'(\mf{x}, \mf{y})$ be the $i$-th non-increasing sequence consisting of
$$\{x_k \in \mf{x}~|~ s_{i+1}+1 \leq x_k < s_i\} \hspace{0.1cm} \mbox{and} \hspace{0.1cm} \{y_l\in \mf{y}~|~ s_{i+1}+1 < y_l \leq s_i\}$$
for $0 \leq i \leq p(\mf{x}, \mf{y})$.
Here we assume $s_0=+\infty$, and $s_{p(\mf{x}, \mf{y})+1}=-\infty$.
Note that $\alpha_i'$ could be $[\es]$.
We now define the following since we want to set $\alpha_i'$ to zero if it has repetitions:

\begin{defn} [$i$-th sequence $\alpha_i$]

Define $\alpha_i: V(\g) \times V(\g) \rightarrow \Z [q^{\pm1}, h^{\pm1}]\lan V(\g) \ran$ by
$$ \alpha_i(\mf{x}, \mf{y})=D(\alpha_i'(\mf{x}, \mf{y})),$$
where $D: \{\mbox{non-increasing sequences of integers bounded by}~ n ~\mbox{and}~ 0\} \rightarrow \Z [q^{\pm1}, h^{\pm1}]\lan V(\g) \ran$ is given as
$$ D(\mf{x})= \left\{
\begin{array}{rl} \mf{x} & \mbox{if the sequence} \hspace{0.1cm} \mf{x} \hspace{0.1cm} \mbox{is decreasing};
\\0 & \mbox{otherwise}.
\end{array}\right.
$$
\end{defn}

\vspace{.2cm}
We are now in a position to define the higher multiplication $\op{M}$ in general.
\begin{defn} [Higher multiplication $\op{M}$] \label{higher}
Define $\op{M}$ over the basis $\{(\mf{x}, \mf{y}) ~|~ \mf{x}, \mf{y} \in V(\g)\}$ by
$$\op{M}(\mf{x}, \mf{y})=h^{\mu(\mf{x}, \mf{y})}  G_{2p(\mf{x}, \mf{y})+1}(\alpha_0(\mf{x}, \mf{y}), \beta(s_1(\mf{x}, \mf{y})), ..., \beta(s_{p(\mf{x}, \mf{y})}(\mf{x}, \mf{y})), \alpha_{p(\mf{x}, \mf{y})}(\mf{x}, \mf{y})).$$
\end{defn}

\begin{rmk}
(1) The definition of $\op{M}$ above reduces to Definition \ref{example} in special cases.

(2) The multiplication $\op{M}(\mf{x}, \mf{y})$ agrees with the gluing map $G_2(\mf{x}, \mf{y})$ if $G_2(\mf{x}, \mf{y}) \neq 0$.
\end{rmk}

\begin{defn} [Multiplication $\op{m}$]
The multiplication
$$\op{m}: \Z [q^{\pm1}]\lan V(\g) \ran \times \Z [q^{\pm1}]\lan V(\g) \ran \rightarrow \Z [q^{\pm1}]\lan V(\g) \ran$$
is defined as the specialization of $\op{M}$ to $h=-1$.
\end{defn}

\subsection{Associativity of the multiplication $\op{m}$}
\begin{prop}
The multiplication $\op{m}$ makes $\Z [q^{\pm1}]\lan V(\g) \ran$ into an associative $\Z [q^{\pm1}]$-algebra with unit $[\es]$.
\end{prop}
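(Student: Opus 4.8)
The unit axiom is immediate: for every $\mf{x}\in V(\g)$ the concatenations $[\es]\mf{x}$ and $\mf{x}[\es]$ are just $\mf{x}$, have no adjacent increasing pairs, and give $\mu=0$, so $\op{m}([\es],\mf{x})=\op{m}(\mf{x},[\es])=\mf{x}$, and $\op{m}$ is $\zq$-bilinear by construction. The content of the proposition is associativity. I should warn at the outset that one cannot prove the stronger ``higher'' statement that $\op{M}$ is associative, because it is false: for any $a$ with $0\le a\le n-1$ one computes $\op{M}(\op{M}([a],[a+1]),[a+1])=(1+h)\,q^{2a+1-n}[a+1]$, whereas $\op{M}([a],\op{M}([a+1],[a+1]))=\op{M}([a],0)=0$. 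Associativity is a feature of the specialization $h=-1$, where $1+h=0$; the proof must therefore carry the scalar $-1$ rather than treat $h$ as a formal variable.

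The plan is to realize $\op{m}$ as straightening to a normal form in a terminating, locally confluent rewriting system. View a concatenation of decreasing sequences as a word in the alphabet $\{0,1,\dots,n\}$, with $[\es]$ the empty word, and introduce the local moves read off from Definition~\ref{example} at $h=-1$, applied to any adjacent pair $[a][b]$ inside a word: $[a][b]\mapsto 0$ if $a=b$; $[a][b]\mapsto -[b][a]$ if $a<b-1$; $[a][a+1]\mapsto q^{2a+1-n}[\es]-[a+1][a]$; and no move if $a>b$. These are precisely the straightening relations of the Clifford algebra $Cl(V_n,Q_n)$ of Theorem~\ref{Clifford}, once the basis element $[x_0,\dots,x_i]$ is identified with the monomial $X_{x_0}\cdots X_{x_i}$. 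First I would verify termination, using the lexicographic measure sending a word $w$ to the length of $w$ and then to $\#\{(p,q):p<q,\ w_p\le w_q\}$: the $[\es]$-term and the $0$-term strictly decrease the length, while each transposition $[a][b]\mapsto-[b][a]$ preserves the length and decreases the number of out-of-order pairs by one.

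Next I would prove a bookkeeping lemma identifying $\op{m}(\mf{x},\mf{y})$ --- as computed by Definition~\ref{higher} at $h=-1$ through $\mu$, the cut-offs $s_i$, the segments $\alpha_i$ and the elements $\beta(s_i)$ --- with the normal form of the word $\mf{x}\mf{y}$. Concretely this says that straightening $\mf{x}\mf{y}$ amounts to choosing independently, for each adjacent increasing pair $\{s_i,s_i+1\}$, between the two terms of $\beta(s_i)=q^{2s_i+1-n}[\es]+h[s_i+1,s_i]$, reassembling with the interleaved segments $\alpha_i$ (where a repeated entry forces a later $[a][a]\mapsto 0$ move, the role of $D$), and that the transpositions needed to bring the remaining entries into decreasing order contribute the overall sign $(-1)^{\mu(\mf{x},\mf{y})}$. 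With this lemma in hand, a standard diamond-lemma argument proves confluence: there are no inclusion ambiguities since every left-hand side has length two, and the overlap ambiguities are exactly the three-letter words $[a][b][c]$ with $a\le b$ and $b\le c$, on which one move acts on $[a][b]$ and another on $[b][c]$. I would resolve each by a finite case analysis on which of $a=b$, $a=b-1$, $a<b-1$ holds and likewise for $(b,c)$, reducing along both orders to the same $\zq$-linear combination of decreasing sequences of length at most three; for instance $[a][a+1][a+1]$ reduces one way to $q^{2a+1-n}[a+1]-q^{2a+1-n}[a+1]=0$ and the other way to $0$ directly, the cancellation being $1+(-1)=0$. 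Termination plus local confluence gives unique normal forms, hence an associative product $(\mf{x},\mf{y})\mapsto\mathrm{NF}(\mf{x}\mf{y})$, which by the bookkeeping lemma equals $\op{m}$; equivalently, once $\op{m}$ is identified with multiplication in $Cl(V_n,Q_n)$ one may simply invoke the standard fact that the Clifford algebra of a quadratic form on a free $\zq$-module is associative and free on the decreasing square-free monomials.

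I expect the main obstacle to be the bookkeeping lemma --- checking that the precise combinatorial recipe of Definition~\ref{higher} (the ordering of the pairs, the index ranges defining $\alpha_i'$, the normalization $D$, and the exponent $\mu$) is exactly what iterated straightening yields --- together with the overlap analysis, whose delicate cases are those where adjacent-by-one pairs interact, such as $[a][a+1][a+2]$, $[a-1][a][a+1]$, $[a][a+1][a+1]$, or $[a][a+1][c]$ and $[a][b][b+1]$ with the outer letter far away. In those cases each reduction order spawns several terms, some containing $[\es]$ and hence shorter, and one must confirm that the powers of $q$ coming from the factors $q^{2s+1-n}$ and the signs coming from $\mu$ and from the moves agree on the two sides --- matchings that fail for a formal $h$ but hold at $h=-1$.
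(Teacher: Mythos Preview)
Your approach is correct but genuinely different from the paper's. The paper proves associativity by induction on the length $l(\mf{y})$ of the middle factor: writing $\mf{y}=\op{m}(\mf{y_1},\mf{y_2})$ with $l(\mf{y_i})<l(\mf{y})$ reduces the triple to shorter ones, and the base case $l(\mf{y})=1$ is established directly (Lemma~\ref{length}) by a structural case analysis on whether $t-1\in\mf{x}$ and $t+1\in\mf{z}$, which in turn reduces to the explicit three-letter computations of Lemma~\ref{assocativity local}. Your rewriting/diamond-lemma argument instead packages the same local computations as overlap resolutions and then invokes confluence; your bookkeeping lemma identifying $\op{m}$ with the Clifford normal-form map plays the role that the paper defers to Proposition~\ref{K_0}, where the Clifford presentation is read off only \emph{after} associativity is in hand. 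The trade-off: the paper's induction avoids having to prove the bookkeeping lemma in full generality (it only needs the decomposition of $\op{M}(\mf{x},\mf{y})$ around a single position $t$), while your route makes the Clifford identification the organizing principle and gets associativity essentially for free once that lemma is done. Note that your observation $(-1)^{\mu(\mf{x},\mf{y})}=(-1)^{\#\{(k,l):x_k<y_l-1\}}$ (since each summand $(-1)^{x_k+y_l+1}$ is $\pm1$ and $(-1)^{p-m}=(-1)^{p+m}$) is exactly what makes the sign bookkeeping work, and your overlap cases properly contain the paper's Lemma~\ref{assocativity local} cases $[s][s][s+1]$, $[s][s+1][s+1]$, $[s-1][s][s+1]$.
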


\begin{proof}
We have $\op{\op{m}}(\mf{x}, [\es])=\op{m}([\es], \mf{x})=\mf{x}$ since $\op{M}(\mf{x}, [\es])=\op{M}([\es], \mf{x})=\mf{x}$.
It remains to show that $\op{m}$ is associative, i.e.,
\begin{equation}
\op{m}(\op{m}(\mf{x}, \mf{y}), \mf{z})=\op{m}(\mf{x}, \op{m}(\mf{y}, \mf{z})) \hspace{0.2cm} \mbox{for all} \hspace{0.2cm} \mf{x}, \mf{y}, \mf{z} \in V(\g).  \tag{*}
\end{equation}
We prove (*) by induction on the length $l(\mf{y})$ of $\mf{y}$.
Assume (*) holds for $l(\mf{y}) \leq k$, where $k \geq 1$.
We show that (*) holds for $l(\mf{y}) = k+1$.
Decompose $\mf{y}=\op{m}(\mf{y_1}, \mf{y_2})$ with $l(\mf{y_1}),l(\mf{y_2}) \leq k$.
Let
$$\op{m}(\mf{x}, \mf{y_1})=\sum \limits_i q^{n_i}\mf{w}_i, \hspace{0.3cm} \op{m}(\mf{y_2}, \mf{z})=\sum \limits_j q^{n_j}\mf{v}_j,$$
where $\mf{w}_i, \mf{v}_j \in V(\g)$ and the sums are finite. Then we have
\begin{align*}
  \op{m}(\op{m}(\mf{x}, \mf{y}), \mf{z})
 = & \op{m}(\op{m}(\mf{x}, \op{m}(\mf{y_1}, \mf{y_2})), \mf{z})\\
 = & \op{m}(\op{m}(\op{m}(\mf{x}, \mf{y_1}), \mf{y_2})), \mf{z}) \hspace{0.2cm} (\mbox{since (*) holds for} \hspace{0.2cm} \mf{x}, \mf{y_1}, \mf{y_2} \hspace{0.2cm} \mbox{where} \hspace{0.2cm} l(\mf{y_1}) \leq k)\\
 = & \op{m}(\op{m}(\sum \limits_i q^{n_i}\mf{w}_i, \mf{y_2}), \mf{z}) \\
 = & \sum \limits_i q^{n_i}\op{m}(\op{m}(\mf{w}_i, \mf{y_2}), \mf{z}) \\
 = & \sum \limits_i q^{n_i}\op{m}(\mf{w}_i, \op{m}(\mf{y_2}, \mf{z})) \hspace{0.2cm} (\mbox{since (*) holds for} \hspace{0.2cm} \mf{w}_i, \mf{y_2}, \mf{z} \hspace{0.2cm} \mbox{where} \hspace{0.2cm} l(\mf{y_2}) \leq k) \\
 = & \op{m}(\sum \limits_i q^{n_i}\mf{w}_i, \op{m}(\mf{y_2}, \mf{z})) \\
= & \op{m}(\op{m}(\mf{x}, \mf{y_1}), \op{m}(\mf{y_2}, \mf{z})).
\end{align*}
Similarly, we have $\op{m}(\mf{x}, \op{m}(\mf{y}, \mf{z})) = \op{m}(\op{m}(\mf{x}, \mf{y_1}), \op{m}(\mf{y_2}, \mf{z}))$.
This proves (*) for $l(\mf{y}) = k+1$.

It remains to prove (*) for $l(\mf{y})=0, 1$.
If $l(\mf{y})=0$, i.e., $\mf{y}=[\es]$, then (*) is trivial since $[\es]$ is the unit.
The proposition follows from Lemma \ref{length} below when $l(\mf{y})=1$.
\end{proof}

\begin{lemma} \label{assocativity local}
$\op{m}$ is associative in the following cases:
\begin{align*}
(1)& \op{m}([s], \op{m}([s], [s+1]))= \op{m}(\op{m}([s], [s]), [s+1])=0 \hspace{0.1cm} for \hspace{0.1cm} 0 \leq s \leq n-1,\\
(2)& \op{m}([s], \op{m}([s+1], [s+1]))=\op{m}(\op{m}([s], [s+1]), [s+1])=0 \hspace{0.1cm} for \hspace{0.1cm} 0 \leq s \leq n-1,\\
(3)& \op{m}([s-1], \op{m}([s], [s+1]))=\op{m}(\op{m}([s-1], [s]), [s+1]) \hspace{0.1cm} for \hspace{0.1cm} 1 \leq s \leq n-1.
\end{align*}
\end{lemma}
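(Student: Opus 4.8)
The plan is to verify the three identities by a direct computation: unwind the definition of the higher multiplication $\op{M}$ (Definitions \ref{example} and \ref{higher}) for the products involved, and then specialise to $h=-1$ to pass to $\op{m}$. First I would record the short products that occur. From Definition \ref{example} one has $\op{m}([s],[s])=\op{m}([s+1],[s+1])=0$ together with
$$\op{m}([s],[s+1])=q^{2s+1-n}[\es]-[s+1,s], \qquad \op{m}([s-1],[s])=q^{2s-1-n}[\es]-[s,s-1].$$
With these in hand, one side of (1) and one side of (2) vanish immediately: $\op{m}(\op{m}([s],[s]),[s+1])=\op{m}(0,[s+1])=0$ and $\op{m}([s],\op{m}([s+1],[s+1]))=\op{m}([s],0)=0$. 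So the remaining work in (1) and (2) is to show that the other composite also vanishes.

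Next I would compute the four mixed products of a length-$1$ sequence with a length-$2$ sequence that appear on the remaining sides. Each of them has exactly one adjacent increasing pair, so in the notation of Definition \ref{higher} we have $p=1$: I would pin down $s_1$, write $\beta(s_1)=q^{2s_1+1-n}[\es]+h[s_1+1,s_1]$, identify the flanking sequences $\alpha_0,\alpha_1$ (applying $D$ to discard one that is not strictly decreasing), read off the cohomological shift $\mu$, and assemble $\op{M}=h^{\mu}G_3(\alpha_0,\beta(s_1),\alpha_1)$, using that $G_3$ kills a summand whenever the concatenation fails to descend at a junction. This yields
$$\op{m}([s],[s+1,s])=q^{2s+1-n}[s], \qquad \op{m}([s+1,s],[s+1])=q^{2s+1-n}[s+1],$$
$$\op{m}([s-1],[s+1,s])=-q^{2s-1-n}[s+1]+[s+1,s,s-1], \qquad \op{m}([s,s-1],[s+1])=-q^{2s+1-n}[s-1]+[s+1,s,s-1].$$
Plugging the first two of these into the remaining sides of (1) and (2) gives $\op{m}([s],q^{2s+1-n}[\es]-[s+1,s])=q^{2s+1-n}[s]-q^{2s+1-n}[s]=0$, and likewise for (2), finishing those two cases.

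For (3), expanding both sides with the products above gives, on the left,
$$\op{m}([s-1],\op{m}([s],[s+1]))=q^{2s+1-n}[s-1]+q^{2s-1-n}[s+1]-[s+1,s,s-1],$$
and on the right
$$\op{m}(\op{m}([s-1],[s]),[s+1])=q^{2s-1-n}[s+1]+q^{2s+1-n}[s-1]-[s+1,s,s-1],$$
so the two sides agree. The main obstacle will be purely bookkeeping: one must track the $h$-exponents produced by $\mu$ — here a single contribution $(-1)^{(s-1)+(s+1)+1}=-1$ on each side of (3), which is exactly what lets the $[s\pm 1]$ terms survive with the correct sign after setting $h=-1$ — together with the vanishing conditions buried inside $G_3$ and $D$, and then confirm that the two superficially different $q$-powers $q^{2s+1-n}$ and $q^{2s-1-n}$ attach to the correct basis vectors on each side. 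No conceptual input beyond care with these grading shifts is required.
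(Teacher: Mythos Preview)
Your proposal is correct and follows essentially the same route as the paper: a direct computation from Definitions \ref{example} and \ref{higher}. The only organisational difference is that the paper keeps the variable $h$ and computes with $\op{M}$ throughout (e.g.\ obtaining $\op{M}([s],\op{M}([s],[s+1]))=(1+h)q^{2s+1-n}[s]$ before setting $h=-1$, and in case (3) observing that the two sides of $\op{M}$ already coincide before specialisation), whereas you specialise to $h=-1$ at the outset and compute the four length-$1$-by-length-$2$ products directly; the underlying arithmetic is identical.
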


\begin{proof}
We compute using $\op{M}$ in $\Z [q^{\pm1}, h^{\pm1}]\lan V(\g) \ran$ and then specialize to $h=-1$.

\n (1) We have $\op{M}(\op{M}([s], [s]), [s+1])=0$ since $\op{M}([s], [s])=0$. On the other hand,
\begin{align*}
\op{M}([s], \op{M}([s], [s+1]))
&= \op{M}([s], \beta(s)) \\
                     &= q^{2s+1-n}\op{M}([s], [\es]) + h~\op{M}([s], [s+1, s]) \\
&= q^{2s+1-n}[s] + h~G_2(q^{2s+1-n}[\es]+h[s+1,s], [s]) \\
                     &= q^{2s+1-n}[s] + hq^{2s+1-n}~G_2([\es], [s]) + h^2~G_2([s+1,s], [s]) \\
                     &= (1+h)q^{2s+1-n}[s].
\end{align*}
Hence $\op{m}([s], \op{m}([s], [s+1]))=\op{M}([s], \op{M}([s], [s+1]))|_{h=-1}=0$.

\n (2) Similar to that of (1).

\n (3) We have
\begin{align*}
 \op{M}([s-1], \op{M}([s], [s+1]))
=& \op{M}([s-1], q^{2s+1-n}[\es]+h[s+1,s]) \\
                     =& q^{2s+1-n}\op{M}([s-1], [\es]) + h~\op{M}([s-1], [s+1, s]) \\
                     =& q^{2s+1-n}[s-1] + h(h^{-1}G_2([s+1], \beta(s-1))) \\
                     =& q^{2s+1-n}[s-1] + G_2([s+1], q^{2(s-1)+1-n}[\es]+h[s,s-1]) \\
                     =& q^{2s+1-n}[s-1] + q^{2s-1-n}G_2([s+1], [\es]) + h~G_2([s+1] ,[s,s-1]) \\
                     =& q^{2s+1-n}[s-1] + q^{2s-1-n}[s+1] + h[s+1, s, s-1], \\
 \op{M}(\op{M}([s-1],[s]), [s+1])
=& \op{M}(q^{2(s-1)+1-n}[\es]+h[s,s-1], [s+1]) \\
                     =& q^{2s-1-n}\op{M}([\es], [s+1]) + h~\op{M}([s, s-1], [s+1]) \\
                     =& q^{2s-1-n}[s+1] + h(h^{-1}G_2(\beta(s), [s-1])) \\
                     =& q^{2s-1-n}[s+1] + G_2(q^{2s+1-n}[\es]+h[s+1,s], [s-1])
                                           \end{align*}
\begin{align*}
                     =& q^{2s-1-n}[s+1] + q^{2s+1-n}G_2([\es], [s-1]) + h~G_2([s+1,s] ,[s-1]) \\
                     =& q^{2s+1-n}[s-1] + q^{2s-1-n}[s+1] + h[s+1, s, s-1].
\end{align*}
Hence $\op{M}([s-1], \op{M}([s], [s+1]))=\op{M}(\op{M}([s-1], [s]), [s+1])$. This proves (3).
\end{proof}

\begin{rmk}
$\op{M}$ is not associative in Cases (1) and (2).
\end{rmk}

\begin{lemma}\label{length}
$\op{m}(\op{m}(\mf{x}, [t]), \mf{z})=\op{m}(\mf{x}, \op{m}([t], \mf{z})) \hspace{0.2cm} \mbox{for all} \hspace{0.2cm} \mf{x}, \mf{z} \in V(\g) \hspace{0.2cm} \mbox{and} \hspace{0.2cm} 0 \leq t \leq n.$
\end{lemma}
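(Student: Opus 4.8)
The plan is to prove the identity by induction on $l(\mf{x})+l(\mf{z})$, with the computations of Lemma \ref{assocativity local} at the heart of the base case. If $\mf{x}=[\es]$ or $\mf{z}=[\es]$ the identity is immediate, since $[\es]$ is a two-sided unit for $\op{m}$, so we may assume $l(\mf{x}),l(\mf{z})\geq 1$. The observation that makes the induction go is that the \emph{general} associativity relation $\op{m}(\op{m}(\mf{a},\mf{b}),\mf{c})=\op{m}(\mf{a},\op{m}(\mf{b},\mf{c}))$ is already available for every triple with $l(\mf{a})+l(\mf{b})+l(\mf{c})\leq l(\mf{x})+l(\mf{z})$: the reduction in the proof of the Proposition above expresses any such relation, without increasing the total length, in terms of instances of associativity whose middle factor has length $\leq 1$, i.e.\ trivial cases and instances of the present Lemma with $l(\mf{x})+l(\mf{z})$ strictly smaller, all of which are covered by the inductive hypothesis.

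For the inductive step I would take $l(\mf{x})\geq 2$ (the case $l(\mf{x})=1$, $l(\mf{z})\geq 2$ is symmetric, splitting $\mf{z}$ at its left end). Write $\mf{x}=\op{m}(\mf{x_1},\mf{x_2})$ with $1\leq l(\mf{x_1}),l(\mf{x_2})<l(\mf{x})$; since $\mf{x}$ is decreasing this is a pure gluing, so $l(\mf{x_1})+l(\mf{x_2})=l(\mf{x})$. Applying general associativity to $(\mf{x_1},\mf{x_2},[t])$ (of total length $l(\mf{x})+1\leq l(\mf{x})+l(\mf{z})$) rewrites $\op{m}(\op{m}(\mf{x},[t]),\mf{z})$ as $\op{m}(\op{m}(\mf{x_1},\op{m}(\mf{x_2},[t])),\mf{z})$. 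One then expands $\op{m}(\mf{x_2},[t])$ into basis elements, uses general associativity termwise to pull $\mf{z}$ inside, invokes the inductive hypothesis on the triple $(\mf{x_2},[t],\mf{z})$ (whose measure $l(\mf{x_2})+l(\mf{z})$ is strictly smaller), and regroups $\mf{x_1},\mf{x_2}$ back into $\mf{x}$, arriving at $\op{m}(\mf{x},\op{m}([t],\mf{z}))$. The places where this touches the edge of the size budget are the pure-gluing intermediate products (terms of $\op{m}(\mf{x_2},[t])$ or of $\op{m}([t],\mf{z})$ of maximal length): such a term is just $\pm$ the sorted set $\mf{x_2}\cup\{t\}$ (resp.\ $\{t\}\cup\mf{z}$), and there one falls back on a direct computation showing that both sides of the desired identity equal the same grading-shifted sorted concatenation of $\mf{x}$, $[t]$, $\mf{z}$, or both vanish because of a repeated entry.

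This leaves the base case $l(\mf{x})=l(\mf{z})=1$, say $\mf{x}=[a]$, $\mf{z}=[c]$. Here I would run through the relative positions of $a$, $t$, $c$: if two of them coincide both sides vanish by a repeated entry; if $\{a,t\}$ or $\{t,c\}$ is a pair of consecutive integers the identity is exactly one of the three cases of Lemma \ref{assocativity local}; and in the remaining configurations $\op{M}$ itself is associative, both sides being $h^{\mu}$ times the same sorted sequence with the same exponent $\mu$, so specializing $h=-1$ gives the claim.

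The main obstacle is precisely this last, ``spread out'' part of the base case together with the pure-gluing subcase of the inductive step: one must check that $\op{M}$ is associative away from the exceptional triples of Lemma \ref{assocativity local}(1),(2), which amounts to verifying that the cohomological shift $\mu$ and the adjacent-pair terms $\beta$ are inserted in a way that does not depend on the order of multiplication. A secondary, organizational point is to confirm that the induction measure genuinely decreases, which uses the fact that multiplying by a length-one sequence changes the length by at most $+1$ and decreases it only when an adjacent increasing pair is absorbed into a $\beta$-term.
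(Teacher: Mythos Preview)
Your induction on $l(\mf{x})+l(\mf{z})$ does not close, and the proposed patch at the ``edge of the size budget'' is incorrect. Concretely: with $N=l(\mf{x})+l(\mf{z})$, after splitting $\mf{x}=\op{m}(\mf{x_1},\mf{x_2})$ and expanding $\op{m}(\mf{x_2},[t])=\sum c_j\mf{u}_j$, you need general associativity for $(\mf{x_1},\mf{u}_j,\mf{z})$. When $\mf{u}_j$ is the sorted set $\mf{x_2}\cup\{t\}$ this triple has total length $N+1$. Feeding it back through the Proposition's reduction only brings it down to instances of the present Lemma with outer total length $\leq N$, i.e.\ exactly $L(N)$ again, so the circularity is not broken. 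Your fallback claim that in this edge case ``both sides equal the same grading-shifted sorted concatenation of $\mf{x},[t],\mf{z}$'' is false in general: e.g.\ with $\mf{x_1}=[3]$ and $\mf{u}_j=[4,2]$ one has $\op{m}([3],[4,2])=q^{7-n}[2]-[4,3,2]$, which already has a $\beta$-contribution and is not a single sorted sequence. So the step you flag as the main obstacle really is one, and it is not handled.

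The paper avoids this entirely by working directly with the explicit formula for $\op{M}$ rather than by length induction. One splits into the three cases (A) $t-1\notin\mf{x}$, (B) $t+1\notin\mf{z}$, (C) $t-1\in\mf{x}$ and $t+1\in\mf{z}$, and in each case writes $\op{M}(\mf{x}\backslash[t\text{ or }t-1],\,\mf{z}\backslash[t+1])$ in the form $h^{\mu_1}G_{2p+2}(\alpha_0,\ldots,\alpha_{i+},\alpha_{i-},\ldots,\alpha_p)$. Both $\op{M}(\mf{x},\op{M}([t],\mf{z}))$ and $\op{M}(\op{M}(\mf{x},[t]),\mf{z})$ are then equal to $h^{\mu_2+\mu_1}G_{2p+3}(\ldots,\alpha_{i+},\,\ast\,,\alpha_{i-},\ldots)$ with the same outer data, and the only difference sits in the slot $\ast$, which is $\op{M}(\mf{a},\op{M}([t],\mf{b}))$ versus $\op{M}(\op{M}(\mf{a},[t]),\mf{b})$ for a local pair $(\mf{a},\mf{b})$ of length $\leq 1$ determined by whether $t\in\mf{x}$, $t+1\in\mf{z}$ (and in case (C), $(\mf{a},\mf{b})=([t-1],[t+1])$). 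Those local identities are exactly Lemma~\ref{assocativity local}. This localizes the question in one step, with no induction on $\mf{x},\mf{z}$ at all.
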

\begin{proof}
The proof is divided into three cases:
(A) $t-1 \notin \mf{x}$;
(B) $t+1 \notin \mf{z}$;
and (C) $t-1 \in \mf{x}, t+1 \in \mf{z}$.
Let $\mf{x} \backslash [t]= \mf{x}$ if $t \notin \mf{x}$ and let $\mf{x} \backslash [t]$ be the decreasing sequence obtained by removing $t$ from $\mf{x}$ if $t \in \mf{x}$.

\vspace{.2cm}
\n (A) Suppose $t-1 \notin \mf{x}$.
We want to express $\op{M}(\mf{x}, \op{M}([t], \mf{z}))$ and $\op{M}(\op{M}(\mf{x}, [t]), \mf{z})$ in terms of $\op{M}(\mf{x} \backslash [t], \mf{z} \backslash [t+1])$.

The number $t$ is not in any adjacent increasing pair for $(\mf{x} \backslash [t], \mf{z} \backslash [t+1])$ since $t-1,t \notin \mf{x} \backslash [t]$.
Then there exists a non-increasing sequence $\alpha_i'(\mf{x} \backslash [t], \mf{z} \backslash [t+1])=[a_0, ..., a_k]$ such that $a_0 \geq t \geq a_k$.
We decompose $\alpha_i'=\alpha_{i+}' \sqcup \alpha_{i-}'$ according to $t$, namely $\alpha_{i+}'$ is the subsequence of $\alpha_i'$ consisting of $\{a_l \in \alpha_i'~|~~ a_l > t \}$ and
$\alpha_{i-}'$ is the complementary sequence of $\alpha_{i+}'$ in $\alpha_{i}'$.
Let $\alpha_{i\pm}=D(\alpha_{i\pm}')$.
Then $\alpha_i=D(\alpha_i')=G_2(\alpha_{i+}, \alpha_{i-})$ and
\begin{equation*}
\begin{aligned}
\op{M}(\mf{x} \backslash [t], \mf{z} \backslash [t+1]) &= h^{\mu_1}G_{2p+1}(\alpha_0, \beta_1, ...,\alpha_i,..., \alpha_p)\\
&= h^{\mu_1}G_{2p+1}(\alpha_0, \beta_1, ...,G_2(\alpha_{i+}, \alpha_{i-}),..., \alpha_p)\\
&= h^{\mu_1}G_{2p+2}(\alpha_0, \beta_1, ...,\alpha_{i+}, \alpha_{i-},..., \alpha_p),
\end{aligned}
\end{equation*}
where $\mu_1=\mu(\mf{x} \backslash [t], \mf{z} \backslash [t+1])$.
Then we have
\begin{equation*}
\begin{aligned}
\op{M}(\mf{x}, \op{M}([t], \mf{z}))&=h^{\mu_2}h^{\mu_1}G_{2p+3}(\alpha_0, \beta_1, ...,\alpha_{i+}, \op{M}(\mf{a}, \op{M}([t], \mf{b})), \alpha_{i-},..., \alpha_p), \\
\op{M}(\op{M}(\mf{x}, [t]), \mf{z})&=h^{\mu_2}h^{\mu_1}G_{2p+3}(\alpha_0, \beta_1, ...,\alpha_{i+}, \op{M}(\op{M}(\mf{a},[t]), \mf{b}), \alpha_{i-},..., \alpha_p),
\end{aligned}
\end{equation*}
where $\mu_2=\mu(\mf{x}, [t])+\mu([t], \mf{z})$, and
$$ (\mf{a},\mf{b})= \left\{
\begin{array}{cl}
([t], [t+1]) & \mbox{if}~~ t\in \mf{x}, t+1\in \mf{z}; \\
([t], [\es]) & \mbox{if}~~ t\in \mf{x}, t+1\notin \mf{z}; \\
([\es], [t+1]) & \mbox{if}~~ t\notin \mf{x}, t+1\in \mf{z}; \\
([\es], [\es]) & \mbox{if}~~ t\notin \mf{x}, t+1\notin \mf{z}.
\end{array}\right.
$$
Since $\op{m}(\mf{a}, \op{m}([t], \mf{b}))=\op{m}(\op{m}(\mf{a},[t]), \mf{b})$ in all the cases by Lemma \ref{assocativity local}, we obtain $\op{m}(\op{m}(\mf{x}, [t]), \mf{z})=\op{m}(\mf{x}, \op{m}([t], \mf{z}))$.

\vspace{0.2cm}
\n (B) Similar to that of (A).

\vspace{0.2cm}
\n (C) Suppose $t-1 \in \mf{x}, t+1 \in \mf{z}$.
We use the above method for $\mf{x}\backslash[t-1], \mf{z} \backslash [t+1]$:
$$\op{M}(\mf{x}\backslash[t-1], \mf{z} \backslash [t+1]) = h^{\mu_1}G_{2p+2}(\alpha_0, \beta_1, ...,\alpha_{i+}, \alpha_{i-},..., \alpha_p).$$
where $\mu_1=\mu(\mf{x}\backslash[t-1], \mf{z} \backslash [t+1])$.
Then we have
\begin{equation*}
\begin{aligned}
\op{M}(\mf{x}, \op{M}([t], \mf{z}))&=h^{\mu_2}h^{\mu_1}G_{2p+3}(\alpha_0, \beta_1, ...,\alpha_{i+}, \op{M}([t-1], \op{M}([t], [t+1])), \alpha_{i-},..., \alpha_p), \\
\op{M}(\op{M}(\mf{x}, [t]), \mf{z})&=h^{\mu_2}h^{\mu_1}G_{2p+3}(\alpha_0, \beta_1, ...,\alpha_{i+}, \op{M}(\op{M}([t-1],[t]), [t+1]), \alpha_{i-},..., \alpha_p),
\end{aligned}
\end{equation*}
where $\mu_2=\mu(\mf{x}, [t])+\mu([t], \mf{z})$.
Since $\op{m}([t-1], \op{m}([t], [t+1]))=\op{m}(\op{m}([t-1], [t]), [t+1])$ by Lemma \ref{assocativity local}, we obtain $\op{m}(\op{m}(\mf{x}, [t]), \mf{z})=\op{m}(\mf{x}, \op{m}([t], \mf{z}))$.
\end{proof}

Now we are ready to describe the algebra structure on $K_0(R)=\Z [q^{\pm1}]\lan V(\g) \ran$. It is easy to see that $K_0(R)$ is generated by sequences of length $1$ since any sequence of length $k$ is a product of $k$ sequences of length $1$. Relations in $K_0(R)$ come from $\op{m}$ for the special cases as in Definition \ref{example}.

\begin{prop} \label{K_0}
Let $X_{\es}$ denote the vertex $[\es] \in V(\gn)$ and $X_i$ denote the vertex $[i] \in V(\gn)$ for $0 \leq i \leq n$.
Then $K_0(R)$ is an associative $\Z[q^{\pm1}]$-algebra with the unit $X_{\es}$, generators $X_i$'s and relations:
\begin{gather*}
X_i^2=0;\\
X_iX_j=-X_jX_i \hspace{.2cm} \mbox{if} \hspace{.2cm} |i-j|>1;\\
X_iX_{i+1}+X_{i+1}X_i=q^{2i+1-n}.
\end{gather*}
\end{prop}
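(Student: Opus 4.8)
The plan is to reduce everything to two ingredients already available: the formulas of Definition \ref{example}, and the fact recorded in Section 2 that $K_0(R)=\Z[q^{\pm1}]\langle V(\g)\rangle$ is a \emph{free} $\Z[q^{\pm1}]$-module on the decreasing sequences. Associativity and the unit $X_\es=[\es]$ are given by the preceding Proposition, so three things remain: that the $X_i$ generate $K_0(R)$, that they satisfy the three displayed relations, and that these are \emph{all} the relations. For generation: if $\mf{x}=[x_0,\dots,x_k]$ with $n\ge x_0>\cdots>x_k\ge 0$, then at every stage the concatenation is still decreasing, so the gluing map $G_2$ is nonzero there and $\op{m}$ coincides with $G_2$ by part (2) of the Remark following Definition \ref{higher}; hence $X_{x_0}X_{x_1}\cdots X_{x_k}=\op{m}([x_0],\op{m}([x_1],\dots\op{m}([x_{k-1}],[x_k])\dots))=\mf{x}$ in $K_0(R)$. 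In particular the $X_i$ generate $K_0(R)$ as a $\Z[q^{\pm1}]$-algebra, and the ordered monomial $X_{x_0}\cdots X_{x_k}$ is exactly the basis vector indexed by $\mf{x}$.

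The three relations are then obtained from Definition \ref{example} by specializing $h=-1$. From $\op{M}([i],[i])=0$ one gets $X_i^2=0$. For $|i-j|>1$, say $i<j-1$, part (3) gives $\op{M}([i],[j])=h^{(-1)^{i+j+1}}[j,i]$, which equals $-[j,i]$ at $h=-1$ regardless of the parity, while $\op{M}([j],[i])=[j,i]$ by \ref{example}(1); hence $X_iX_j=-X_jX_i$. For $j=i+1$, \ref{example}(4) gives $\op{m}([i],[i+1])=q^{2i+1-n}[\es]-[i+1,i]$ and $\op{m}([i+1],[i])=[i+1,i]$, so that $X_iX_{i+1}+X_{i+1}X_i=q^{2i+1-n}$.

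The remaining point — that no further relations hold — is where the real work lies, and I expect it to be the main obstacle. Let $B$ be the $\Z[q^{\pm1}]$-algebra presented abstractly by generators $X_0,\dots,X_n$ and the three families of relations above (this is precisely the Clifford algebra $Cl(V_n,Q_n)$ of the introduction). By the relations just verified there is a well-defined algebra homomorphism $\phi\colon B\to K_0(R)$ with $\phi(X_i)=[i]$, surjective by the generation step. To see $\phi$ is injective I would run the elementary half of the Clifford PBW theorem inside $B$: rewriting $X_iX_j=-X_jX_i+\lambda_{ij}$ with $\lambda_{ij}\in\Z[q^{\pm1}]$ equal to $q^{2\min(i,j)+1-n}$ if $|i-j|=1$ and to $0$ otherwise, an adjacent transposition of an out-of-order pair in a word lowers its number of inversions by exactly one while the accompanying defect term is strictly shorter, and an adjacent repeated letter annihilates the word via $X_i^2=0$; a double induction on word length and on the number of inversions then shows that the ordered monomials $m_{\mf{x}}=X_{x_0}\cdots X_{x_k}$ over decreasing sequences $\mf{x}$, together with $m_\es=1$, span $B$ over $\Z[q^{\pm1}]$. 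Since $\phi(m_{\mf{x}})=\mf{x}$ is the $\mf{x}$-th vector of the $\Z[q^{\pm1}]$-basis $V(\g)$ of $K_0(R)$, any $\sum_{\mf{x}}c_{\mf{x}}m_{\mf{x}}$ in $\ker\phi$ maps to $\sum_{\mf{x}}c_{\mf{x}}\mf{x}=0$ and hence has all $c_{\mf{x}}=0$; therefore $\phi$ is an isomorphism of $\Z[q^{\pm1}]$-algebras, which is the content of the Proposition. The one genuine difficulty is making this spanning argument precise — the inversion bookkeeping under adjacent swaps and the termination of the nested induction; everything else is a direct transcription of Definition \ref{example} together with the freeness of $K_0(R)$ over $\Z[q^{\pm1}]$.
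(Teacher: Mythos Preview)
Your proof is correct and in fact more complete than what the paper supplies. The paper gives no proof environment for this proposition; it is treated as a direct consequence of the preceding sentence, namely that every decreasing sequence is a product of length-$1$ sequences and that ``Relations in $K_0(R)$ come from $\op{m}$ for the special cases as in Definition~\ref{example}.'' Your verification of generation and of the three relation families is exactly this, read off from Definition~\ref{example} at $h=-1$, and matches the paper's intent.

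Where you go beyond the paper is in the completeness of relations. The paper never argues that the displayed relations form a \emph{presentation}; it simply asserts it. Your PBW-style argument---defining the abstract Clifford algebra $B$, obtaining the surjection $\phi\colon B\to K_0(R)$, spanning $B$ by the ordered monomials $m_{\mf x}$ via the rewriting procedure, and then reading off injectivity from the fact that $\phi(m_{\mf x})=\mf x$ lands on a free $\Z[q^{\pm1}]$-basis---is the standard and correct way to close this gap. The ``genuine difficulty'' you flag (termination of the straightening induction) is not actually problematic: each adjacent swap strictly decreases the inversion count and the defect terms are strictly shorter words, so the lexicographic induction on (length, inversions) terminates immediately. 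Nothing in your argument needs repair.
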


\section{Categorification of the multiplication on $K_0(\rn)$}
We define a functor $\cal{M}_n: DGP(\drnn) \ra DGP(\rn)$ by tensoring with a $q$-graded DG $(\rn, \drnn)$-bimodule $T_n$.
We prove that $\cal{M}_n$ preserves closed and exact morphisms so that it induces an exact functor $\cal{M}_n|_{H^0}: H^0(DGP(\drnn)) \ra H^0(DGP(\rn))$.
Finally, we show that $\cal{M}_n|_{H^0}$ categorifies the multiplication $\op{m}_n$.

\subsection{The $q$-graded DG $(R, \drr)$-bimodule $T$}
In this section we construct a $q$-graded DG bimodule $T=\bigoplus T^k$, where $T^k$ is the summand of cohomological degree $k$.
The differential $d=\sum d^k$, $d^k: T^k \ra T^{k+1}$, satisfies
$$d(a\cdot m)=a\cdot d(m), \hspace{.3cm} d(m\cdot r)=d(m)\cdot r+m\cdot d(r),$$
for $m \in T, a \in R, r \in \drr$.

The DG $(R, \drr)$-bimodule $T$ is constructed in the following steps:
\be
\item We define the left $R$-module $T$ in Section 4.1.1;
\item We then define the differential $d$ of $T$ as a left DG $R$-module in Section 4.1.2;
\item We define the right DG $(\drr)$-module structure of $T$ in Sections 4.1.3 and 4.1.4.
\ee

\subsubsection{$T$ as a left $R$-module}
Since $T\otimes_{\drr}P(\mf{x,y})$ is expected to be a DG $R$-module in $DGP(R)$ which descends to $\op{m}(\mf{x}, \mf{y})$ in the Grothendieck group $K_0(R)$ of $H^0(DGP(R))$, we use the higher multiplication $\op{M}$ to construct $T$.

Recall from Definition \ref{higher} that:
$$\op{M}(\mf{x}, \mf{y})=h^{\mu(\mf{x}, \mf{y})}  G_{2p(\mf{x}, \mf{y})+1}(\alpha_0, \beta(s_1(\mf{x}, \mf{y})), ..., \beta(s_{p(\mf{x}, \mf{y})}(\mf{x}, \mf{y})), \alpha_{p(\mf{x}, \mf{y})}).$$
Note that $\op{M}(\mf{x}, \mf{y})$ is of degree $\mu(\mf{x}, \mf{y})+p(\mf{x}, \mf{y})$ as a Laurent polynomial of $h$.

Let $\op{M}^k: V(\g) \times V(\g) \rightarrow \Z [q^{\pm1}]\lan V(\g) \ran$ be the coefficient of $h^k$ in $\op{M}$:
$$\op{M}(\mf{x}, \mf{y})=\sum \limits_{k=-\infty}^{+\infty} \op{M}^k(\mf{x}, \mf{y})h^k
=\sum \limits_{k=\mu(\mf{x}, \mf{y})}^{\mu(\mf{x}, \mf{y})+p(\mf{x}, \mf{y})} \op{M}^k(\mf{x}, \mf{y})h^k.$$
We want to expand $\op{M}^k(\mf{x}, \mf{y})$ further in terms of $q$.
We will omit $\mf{x}, \mf{y}$ in $\mu(\mf{x}, \mf{y}), p(\mf{x}, \mf{y}), s_i(\mf{x}, \mf{y})$ when $\mf{x}, \mf{y}$ are understood.

Let $\cal{I}_{k-\mu}(\mf{x}, \mf{y})= \{A \subset \{1,2,...,p\}~|~ |A|=k-\mu\}$ be the collection of all $(k-\mu)$-element subsets of $\{1,2,...,p\}$ for $\mu \leq k \leq \mu+p$.
Let $\eta: \cal{I}_{k-\mu}(\mf{x}, \mf{y}) \ra \Z$ be the overall shift in $q$ defined by
$$\eta(A)= \sum \limits_{i \notin A}(2s_i+1-n).$$
Let $\beta_{A}: \{s_1, ..., s_p\} \ra V(\g)$ be a choice of components in $\beta$ for an index set $A \in \cal{I}_{k-\mu}(\mf{x}, \mf{y})$:
$$ \beta_{A}(s_i)= \left\{
\begin{array}{cl} \
[s_i +1, s_i] & \mbox{if} \hspace{0.3cm} i \in A; \\

[\es] & \mbox{otherwise}.
\end{array}\right.
$$

\begin{defn} [Expansion of $\op{M}^k$]
For any given $\mf{x}, \mf{y} \in V(\g)$ and $A \in \cal{I}_{k-\mu}(\mf{x}, \mf{y})$,
let
$$\op{M}_A ^k(\mf{x}, \mf{y})= G_{2p+1}(\alpha_0, \beta_A(s_1), ..., \beta_A(s_p), \alpha_p) \in V(\g) \sqcup \{0\}$$
be the coefficient of $q^{\eta(A)}$ in $\op{M}^k(\mf{x}, \mf{y})$:
$$\op{M}^k(\mf{x}, \mf{y})= \sum \limits_{A \in \cal{I}_{k-\mu}} \op{M}_A ^k(\mf{x}, \mf{y})q^{\eta(A)}.$$
\end{defn}

Recall that $P(\mf{x})=R\cdot e(\mf{x})$ and $P(\mf{x})\{n\}$ is $P(\mf{x})$ with the $q$-grading shifted by $n$.
Let $P(0)=0$ denote the trivial $R$-module.
We are now in a position to define $T^k$ as a left $R$-module.

\begin{defn} [$R$-module $T^k$]
Define $T^k = \bigoplus \limits_{\mf{x}, \mf{y} \in V(\g)} T^k(\mf{x}, \mf{y})$ as left projective $R$-modules, where
$$T^k(\mf{x}, \mf{y}) = \bigoplus_{A \in \cal{I}_{k-\mu}(\mf{x}, \mf{y})} P(\op{M}_A ^k(\mf{x}, \mf{y}))\{\eta(A)\}.$$
\end{defn}

\begin{rmk} \label{class}
The class of $T^k(\mf{x}, \mf{y})$ in the Grothendieck group $K_0(R)$ is $\op{M}^k(\mf{x}, \mf{y})$.
\end{rmk}

\subsubsection{$T$ as a left DG $R$-module}
We define $d^k: T^k \ra T^{k+1}$ as a left $R$-module differential by
$$d^k = \sum \limits_{\mf{x}, \mf{y} \in V(\g)} d^k(\mf{x}, \mf{y}) = \sum \limits_{\mf{x}, \mf{y} \in V(\g)} \sum_{\tiny{\begin{array}{c}A \in \cal{I}_{k-\mu}\\B \in \cal{I}_{k+1-\mu}\end{array}}}d_A^B(\mf{x}, \mf{y}),$$
where $d^k(\mf{x}, \mf{y}): T^k(\mf{x}, \mf{y}) \to T^{k+1}(\mf{x}, \mf{y})$ is defined on each summand by $$d_A^B(\mf{x}, \mf{y}): P(\op{M}_A ^k(\mf{x}, \mf{y}))\{\eta(A)\} \to P(\op{M}_B ^{k+1}(\mf{x}, \mf{y}))\{\eta(B)\}.$$

For any given $\mf{x}, \mf{y} \in V(\g)$, if $A \in \cal{I}_{k-\mu}$ is a subset of $B \in \cal{I}_{k+1-\mu}$ and
\begin{gather*}
\op{M}_A ^k=G_{2p+1}(\alpha_0, \beta_A(s_1), ..., \beta_A(s_p), \alpha_p) \\
\op{M}_B ^{k+1}=G_{2p+1}(\alpha_0, \beta_B(s_1), ..., \beta_B(s_p), \alpha_p)),
\end{gather*}
are both nonzero,
then they only differ by a pair of adjacent numbers $\{s_{i(A,B)}+1, s_{i(A,B)}\}$ at $\beta_A(s_{i(A,B)})$ and $\beta_B(s_{i(A,B)})$.
Here we write $i(A,B)$ for the unique element in $A-B$.
Hence there exists a generator $r(\op{M}_A ^k \xra{s_{i(A,B)}} \op{M}_B ^{k+1}) \in R$.

\begin{defn}
For any given $\mf{x}, \mf{y} \in V(\g)$, $A \in \cal{I}_{k-\mu}, B \in \cal{I}_{k+1-\mu}$, let
$$d_A^B(\mf{x}, \mf{y}): P(\op{M}_A ^k)\{\eta(A)\} \ra P(\op{M}_B ^{k+1})\{\eta(B)\}$$
be the left $R$-module map given by multiplying $r(\op{M}_A ^k \xra{s_{i(A,B)}} \op{M}_B ^{k+1})$ from the right if $B = A \sqcup \{i(A,B)\}$ and $\op{M}_A ^k, \op{M}_B ^{k+1} \in V(\g)$.
Otherwise let $d_A^B(\mf{x}, \mf{y})=0$.
\end{defn}

\begin{rmk}
The map $d_A^B$ preserves the $q$-grading because of the $q$ shifting $\{\eta(A)\}, \{\eta(B)\}$ on the modules.
\end{rmk}

\begin{lemma}
$d$ is a differential, i.e., $d^{k+1} \circ d^{k}=0.$
\end{lemma}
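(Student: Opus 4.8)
The plan is to verify $d^{k+1}\circ d^k = 0$ by restricting to a single summand. Since both $d^k$ and $d^{k+1}$ decompose over the index sets $\mf{x},\mf{y} \in V(\g)$ and the summands $P(\op{M}_A^k)\{\eta(A)\}$, it suffices to fix $\mf{x}, \mf{y}$ and fix $A \in \cal{I}_{k-\mu}$ and $C \in \cal{I}_{k+2-\mu}$, and show that
$$\sum_{B: A \subset B \subset C,\, |B|=k+1-\mu} d_B^C(\mf{x},\mf{y}) \circ d_A^B(\mf{x},\mf{y}) = 0.$$
If $C$ is not obtained from $A$ by adding two elements, or if $\op{M}_A^k$ or $\op{M}_C^{k+2}$ is $0$, then every term vanishes trivially. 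So I would assume $C = A \sqcup \{i,j\}$ for distinct $i, j \in \{1,\dots,p\}$ (i.e. two distinct indices where we switch from $[\es]$ to $[s+1,s]$), with $\op{M}_A^k, \op{M}_C^{k+2} \in V(\g)$.

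The key observation is that there are exactly \emph{two} intermediate sets, $B_1 = A \sqcup \{i\}$ and $B_2 = A \sqcup \{j\}$, so the sum has (at most) two terms. Each term is left multiplication by a product of two $r$-generators of $R$: the term through $B_1$ is $r(\op{M}_{B_1}^{k+1} \xra{s_j} \op{M}_C^{k+2}) \cdot r(\op{M}_A^k \xra{s_i} \op{M}_{B_1}^{k+1})$ acting on the right, and the term through $B_2$ is the analogous product with $i$ and $j$ swapped. Here I must track carefully whether the intermediate modules $\op{M}_{B_1}^{k+1}$ and $\op{M}_{B_2}^{k+1}$ are nonzero; since $\op{M}_C^{k+2}$ is a decreasing sequence obtained by gluing, and inserting only one of the two adjacent pairs $\{s_i+1,s_i\}$, $\{s_j+1,s_j\}$ can only fail to be decreasing in a way that the full insertion repairs — this needs a short combinatorial check that either both intermediates are nonzero, or the whole sum is zero for a structural reason. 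Assuming both are nonzero, both composite paths $\op{M}_A^k \to \op{M}_C^{k+2}$ in the quiver $\gn$ add the two disjoint adjacent pairs $\{s_i+1,s_i\}$ and $\{s_j+1,s_j\}$ (disjoint because $|A| < |C|$ forces $s_i \neq s_j$ and the gluing keeps them separated), just in the two possible orders.

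The conclusion then follows \emph{directly} from the last relation in Definition \ref{def rn}: the commutativity relation
$$r(\mf{u} \xra{s} \mf{v}) \cdot r(\mf{v} \xra{t} \mf{w}) = r(\mf{u} \xra{t} \mf{z}) \cdot r(\mf{z} \xra{s} \mf{w})$$
says precisely that these two composite products in $R$ are \emph{equal}. Since $\F = \mathbb{F}_2$, equal elements sum to zero: $d_{B_1}^C \circ d_A^{B_1} + d_{B_2}^C \circ d_A^{B_2} = 0$ as left $R$-module maps (and the $q$-grading shifts $\{\eta(\cdot)\}$ match up automatically since $\eta$ depends only on which indices are \emph{not} inserted, and the total set of inserted indices is the same, $A$ versus $C$, along both paths). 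This establishes $d^{k+1}\circ d^k = 0$.

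I expect the main obstacle to be the bookkeeping in the paragraph above: confirming that the two intermediate expressions $\op{M}_{B_1}^{k+1}, \op{M}_{B_2}^{k+1}$ are either both nonzero or else the vanishing is forced — i.e., ruling out the asymmetric case where exactly one intermediate is zero (which would leave a single nonzero term and break $d^2 = 0$). The resolution should be that inserting an adjacent pair $\{s+1,s\}$ into a decreasing sequence whose surrounding blocks $\alpha_{i-1}, \alpha_i$ already "make room" (by the definition of the $\alpha_i$ via the ranges $s_{i+1}+1 \leq x_k < s_i$ etc.) can never create a repetition, so $\op{M}_{B_1}^{k+1} = 0$ would force $\op{M}_A^k = 0$ as well — a contradiction. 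Once this local nonvanishing claim is pinned down, the rest is an immediate appeal to the defining commutativity relation of $R$ together with $\mathrm{char}\,\F = 2$.
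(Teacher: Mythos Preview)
Your proposal is correct and follows essentially the same route as the paper: fix $\mf{x},\mf{y}$ and a starting index set $A$, observe that any nonzero composite $d_B^C\circ d_A^B$ has a partner $d_{B'}^C\circ d_A^{B'}$ obtained by inserting the two adjacent pairs in the other order, and then invoke the commutativity relation in Definition~\ref{def rn} together with $\op{char}\F=2$ to cancel the pair. The only point you flag as an obstacle---that the second intermediate $\op{M}_{B'}^{k+1}$ might vanish while the first does not---is exactly what the paper asserts in one line (``there exists another index set $B'$ \ldots\ and $\op{M}_{B'}^{k+1}\in V(\g)$'') without further comment; your sketch of why this holds (replacing $[s+1,s]$ by $[\es]$ only weakens the gluing constraint, so nonvanishing of $\op{M}_C^{k+2}$ forces nonvanishing of each intermediate) is the right justification and is slightly more explicit than the paper.
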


\begin{proof}
It suffices to prove that
\begin{equation}
d^{k+1}(\mf{x}, \mf{y})\circ d^{k}(\mf{x}, \mf{y}){\big |}_{P(\op{M}_A ^k)\{\eta(A)\}}=\sum_{\tiny{\begin{array}{c}B \in \cal{I}_{k+1-\mu}\\C \in \cal{I}_{k+2-\mu}\end{array}}}d_B^C \circ d_A^B=0 \tag{**}
\end{equation}
for any $A \in \cal{I}_{k-\mu}(\mf{x}, \mf{y})$ and any pair $\mf{x}, \mf{y} \in V(\g)$.
By definition $d_A^B, d_B^C$ are both nonzero if and only if
$$B = A \sqcup \{i(A,B)\},~~~~~~ C = B \sqcup \{i(B,C)\},$$
for some $i(A,B), i(B,C)$ and $\op{M}_A ^k, \op{M}_B ^{k+1}, \op{M}_C ^{k+2} \in V(\g)$.
Then there exists another index set $B' = A \sqcup \{i(B,C)\}$ such that $C = B' \sqcup \{i(A,B)\}$ and $\op{M}_{B'} ^{k+1} \in V(\g)$.
Hence $d_A^{B'}, d_{B'}^C$ are both nonzero by definition.
The map
$$d_B^C \circ d_A^B: P(\op{M}_A ^k)\{\eta(A)\} \ra P(\op{M}_C ^{k+2})\{\eta(C)\}$$
is the right multiplication by
$$r(\op{M}_A ^k \xra{s_{i(A,B)}} \op{M}_B ^{k+1}) \cdot r(\op{M}_B ^{k+1} \xra{s_{i(B,C)}} \op{M}_C ^{k+2})$$
and the map $d_{B'}^C \circ d_A^{B'}$ is the right multiplication by
$$r(\op{M}_A ^k \xra{s_{i(B,C)}} \op{M}_{B'} ^{k+1}) \cdot r(\op{M}_{B'} ^{k+1} \xra{s_{i(A,B)}} \op{M}_C ^{k+2}).$$
Hence $d_B^C \circ d_A^B + d_{B'}^C \circ d_A^{B'}=0$ since
$$r(\op{M}_A ^k \xra{s_{i(A,B)}} \op{M}_B ^{k+1}) \cdot r(\op{M}_B ^{k+1} \xra{s_{i(B,C)}} \op{M}_C ^{k+2})=r(\op{M}_A ^k \xra{s_{i(B,C)}} \op{M}_{B'} ^{k+1}) \cdot r(\op{M}_{B'} ^{k+1} \xra{s_{i(A,B)}} \op{M}_C ^{k+2}).$$
This implies Equation (**).
\end{proof}

\subsubsection{The right $\drr$-multiplication.}
In this subsection we define the right multiplication with the generators $e(\mf{x},\mf{y})$ and $r(\mf{x},\mf{y} \xra{s,t} \mf{x'},\mf{y'})$ of $\drr$.
Let $m \times r$ denote the right multiplication for $m \in T, r \in \drr$ and $m \cdot a$ denote the multiplication in $R$ for $m \in P(\mf{x}) \subset R, a \in R$.
The definition of $m\times r$ is rather involved and occupies the next several pages.

\vspace{.2cm}
\n (1) Let $r=e(\mf{x},\mf{y})$. Then define
$m \times e(\mf{x},\mf{y})=\delta_{\mf{x},\mf{x'}} \delta_{\mf{y},\mf{y'}} m$ for $m \in T(\mf{x'},\mf{y'})$.

\vspace{.2cm}
\n (2) Let $r=r(\mf{x},\mf{y} \xra{\es,t} \mf{x},\mf{y'})$.
 Let us abbreviate $\mu=\mu(\mf{x}, \mf{y})$, $\mu'=\mu(\mf{x}, \mf{y'})$, $p=p(\mf{x}, \mf{y})$, $p'=p(\mf{x}, \mf{y'})$, $s_i=s_i(\mf{x}, \mf{y})$ and $s_i'=s_i(\mf{x}, \mf{y'})$.
Since $\mf{y'}=\mf{y}\sqcup\{t+1,t\}$ and in particular $t+1 \notin \mf{y}$, we have $t \notin \{s_1, ..., s_{p}\}$. Let $a(t) \in \{1, ..., p\}$ be the number such that $s_{a(t)} > t > s_{a(t)+1}$.

The right multiplication
$$
\begin{array}{cccc}
\times r(\mf{x},\mf{y} \xra{\es,t} \mf{x},\mf{y'}):
& \bigoplus\limits_{A \in \cal{I}_{k-\mu}} P(\op{M}_A ^k(\mf{x}, \mf{y}))\{\eta(A)\}& \ra & \bigoplus\limits_{B \in \cal{I}_{k-\mu'}} P(\op{M}_B ^{k}(\mf{x}, \mf{y'}))\{\eta(B)\}
\end{array}
$$
is defined on a case-by-case basis as follows:

\vspace{.2cm}
\n (2A) Suppose $t-1 \notin \mf{x}, t \notin \mf{x}.$
We have $\mu'=\mu, p'=p$.
We decompose $\alpha_{a(t)}=G_2(\alpha_{a(t)+}, \alpha_{a(t)-}),$ where $\alpha_{a(t)+}$ is the subsequence of $\alpha_{a(t)}$ consisting of numbers greater than $t$ and $\alpha_{a(t)-}$ is the complementary sequence of $\alpha_{a(t)+}$ in $\alpha_{a(t)}$.
Then we have
    \begin{equation*}
    \begin{aligned}
     \op{M}(\mf{x}, \mf{y})=
    & h^{\mu}  G_{2p+1}(\alpha_0, ..., \alpha_{a(t)}, ..., \alpha_{p}) \\
    = & h^{\mu}  G_{2p+2}(\alpha_0, ..., \alpha_{a(t)+}, \alpha_{a(t)-}, ..., \alpha_{p}), \\
    \op{M}(\mf{x}, \mf{y'})=
    & h^{\mu}  G_{2p+3}(\alpha_0, ..., \alpha_{a(t)+}, [t+1, t], \alpha_{a(t)-}, ..., \alpha_{p}).
    \end{aligned}
    \end{equation*}
    Define the function $f: \cal{I}_{k-\mu}(\mf{x}, \mf{y}) \ra \cal{I}_{k-\mu'}(\mf{x}, \mf{y'})$ as the identity. Then two non-increasing sequences $\op{M}_A ^k(\mf{x}, \mf{y})$ and $\op{M}_{f(A)} ^k(\mf{x}, \mf{y'})$ differ by the pair $\{t+1, t\}$.
    If the two sequences are decreasing, then there exists a generator $r(\op{M}_A ^k(\mf{x}, \mf{y}) \xra{t} \op{M}_{f(A)} ^k(\mf{x}, \mf{y'})) \in R$;
    otherwise we write $r(\op{M}_A ^k(\mf{x}, \mf{y})$ $\xra{t} \op{M}_{f(A)} ^k(\mf{x}, \mf{y'}))$ to denote $0$.

    If $m \in P(\op{M}_A ^k(\mf{x}, \mf{y}))\{\eta(A)\}$, then we define
    $$m \times r(\mf{x},\mf{y} \xra{\es,t} \mf{x},\mf{y'})=
    m \cdot r(\op{M}_A ^k(\mf{x}, \mf{y}) \xra{t} \op{M}_{f(A)} ^k(\mf{x}, \mf{y'})) \in P(\op{M}_{f(A)} ^k(\mf{x}, \mf{y'}))\{\eta(f(A))\}.$$

\vspace{.1cm}
\n (2B) Suppose $t-1 \notin \mf{x}, t \in \mf{x}.$
We have $\mu'=\mu, p'=p+1$ and the same decomposition $\alpha_{a(t)}=G_2(\alpha_{a(t)+}, \alpha_{a(t)-})$ as that in (2A). In particular, $t \in \alpha_{a(t)-}$.
The number $t$ is in some increasing adjacent pair for $\mf{x}, \mf{y'}$ since $t \in \mf{x}$ and $t+1 \in \mf{y'}$.
More precisely, $t=s_{a(t)+1}' \in \{s_1', ..., s_{p'}'\}$.
    \begin{align*}
    \op{M}(\mf{x}, \mf{y})&=
     h^{\mu}  G_{2p+2}(\alpha_0, ...,\beta(s_{a(t)}), \alpha_{a(t)+}, \alpha_{a(t)-}, ..., \alpha_{p}), \\
    \op{M}(\mf{x}, \mf{y'})&=
    h^{\mu}  G_{2p+3}(\alpha_0, ..., \beta(s_{a(t)}'), \alpha_{a(t)+}, \beta(t), \alpha_{a(t)-}, ..., \alpha_{p})\\
    &= h^{\mu}  G_{2p+3}(\alpha_0, ..., \beta(s_{a(t)}'), \alpha_{a(t)+}, \beta(s_{a(t)+1}'), \alpha_{a(t)-}, ..., \alpha_{p}).
    \end{align*}
    Define $f: \cal{I}_{k-\mu}(\mf{x}, \mf{y}) \ra \cal{I}_{k-\mu'}(\mf{x}, \mf{y'})$ for $A \in \cal{I}_{k-\mu}(\mf{x}, \mf{y})$ by
    $$f(A)=\{a~|~ a \in A, a \leq a(t)\} \sqcup \{a+1~|~ a \in A, a>a(t)\}.$$
    \noindent We have $\beta_{f(A)}(s_{a(t)+1}')=[\es]$ since $a(t)+1 \notin f(A)$.
    Hence $\op{M}_A ^k(\mf{x}, \mf{y})=\op{M}_{f(A)} ^k(\mf{x}, \mf{y'})$.

    If $m \in P(\op{M}_A ^k(\mf{x}, \mf{y}))\{\eta(A)\}$, then we define
    $$m \times r(\mf{x},\mf{y} \xra{\es,t} \mf{x},\mf{y'})=
    m \cdot e(\op{M}_{f(A)} ^k(\mf{x}, \mf{y'}))\in P(\op{M}_{f(A)} ^k(\mf{x}, \mf{y'}))\{\eta(f(A))\}.$$

\vspace{.1cm}
\n (2C) Suppose $t-1 \in \mf{x}, t \notin \mf{x}.$
We have $\mu'=\mu+\mu(t-1, t+1)=\mu-1$ and $p'=p+1$.
We decompose $$\alpha_{a(t)}=G_3(\alpha_{a(t)+}, [t-1], \alpha_{a(t)-}),$$ where $\alpha_{a(t)+}$ is the subsequence of $\alpha_{a(t)}$ consisting of numbers greater than $t$ and $\alpha_{a(t)-}$ is the subsequence of $\alpha_{a(t)}$ consisting of numbers less than $t-1$.
The number $t-1$ is in some increasing adjacent pair for $\mf{x}, \mf{y'}$ since $t-1 \in \mf{x}$ and $t \in \mf{y'}$.
More precisely, $t-1=s_{a(t)+1}'$.
    \begin{equation*}
    \begin{aligned}
    \op{M}(\mf{x}, \mf{y})=
    & h^{\mu}  G_{2p+3}(\alpha_0, ..., \beta(s_{a(t)}), \alpha_{a(t)+}, [t-1], \alpha_{a(t)-}, ..., \alpha_{p}),\\
    \op{M}(\mf{x}, \mf{y'})=
    & h^{\mu'}  G_{2p+4}(\alpha_0, ..., \beta(s_{a(t)}'), \alpha_{a(t)+}, [t+1], \beta(t-1), \alpha_{a(t)-}, ..., \alpha_{p})\\
    =
    & h^{\mu'}  G_{2p+4}(\alpha_0, ..., \beta(s_{a(t)}'), \alpha_{a(t)+}, [t+1], \beta(s_{a(t)+1}'), \alpha_{a(t)-}, ..., \alpha_{p}).
    \end{aligned}
    \end{equation*}
    Define $f: \cal{I}_{k-\mu}(\mf{x}, \mf{y}) \ra \cal{I}_{k-\mu'}(\mf{x}, \mf{y'})$ for $A \in \cal{I}_{k-\mu}(\mf{x}, \mf{y})$ by
    $$f(A)=\{a~|~ a \in A, a \leq a(t)\} \sqcup \{a(t)+1\} \sqcup \{a+1~|~ a \in A, a>a(t)\}.$$
    We have $\beta_{f(A)}(s_{a(t)+1}')=[t, t-1]$ since $a(t)+1 \in f(A)$.
    Then there exists a generator $r(\op{M}_A ^k(\mf{x}, \mf{y}) \xrightarrow{t} \op{M}_{f(A)} ^k(\mf{x}, \mf{y'}))$ if $\op{M}_A ^k(\mf{x}, \mf{y})$ and $\op{M}_{f(A)} ^k(\mf{x}, \mf{y'})$ are both nonzero.
    The definition of the right multiplication is the same as that in (2A).

\vspace{.1cm}
\n (2D) Suppose $t-1, t \in \mf{x}.$
We have $\mu'=\mu-1$ and $p'=p+2$.
We decompose $$\alpha_{a(t)}=G_4(\alpha_{a(t)+}, [t], [t-1], \alpha_{a(t)-}),$$ where $\alpha_{a(t)+}$ is the subsequence of $\alpha_{a(t)}$ consisting of numbers greater than $t$ and $\alpha_{a(t)-}$ is the subsequence of $\alpha_{a(t)}$ consisting of numbers less than $t-1$.
The numbers $t$ and $t-1$ are in some increasing adjacent pairs for $\mf{x}, \mf{y'}$ since $t-1,t \in \mf{x}$ and $t,t+1 \in \mf{y'}$.
More precisely, we have $t=s_{a(t)+1}'$ and $t-1=s_{a(t)+2}'$.
    \begin{equation*}
    \begin{aligned}
    \op{M}(\mf{x}, \mf{y})=
    & h^{\mu}  G_{2p+4}(\alpha_0, ..., \beta(s_{a(t)}), \alpha_{a(t)+}, [t], [t-1], \alpha_{a(t)-}, ..., \alpha_{p});\\
    \op{M}(\mf{x}, \mf{y'})=
    & h^{\mu'}  G_{2p+5}(\alpha_0, ..., \beta(s_{a(t)}'), \alpha_{a(t)+}, \beta(t), [\es], \beta(t-1), \alpha_{a(t)-}, ..., \alpha_{p})\\
    =
    & h^{\mu'}  G_{2p+5}(\alpha_0, ..., \beta(s_{a(t)}'), \alpha_{a(t)+}, \beta(s_{a(t)+1}'), [\es], \beta(s_{a(t)+2}'), \alpha_{a(t)-}, ..., \alpha_{p}).
    \end{aligned}
    \end{equation*}
    Define $f: \cal{I}_{k-\mu}(\mf{x}, \mf{y}) \ra \cal{I}_{k-\mu'}(\mf{x}, \mf{y'})$ for $A \in \cal{I}_{k-\mu}(\mf{x}, \mf{y})$ by
    $$f(A)=\{a~|~ a \in A, a \leq a(t)\} \sqcup \{a(t)+2\} \sqcup \{a+2~|~ a \in A, a>a(t)\}.$$
   We have $\beta_{f(A)}(s_{a(t)+1}')=[\es], \beta_{f(A)}(s_{a(t)+2}')=[t, t-1]$ since $a(t)+1 \notin f(A), a(t)+2 \in f(A)$. Hence $\op{M}_A ^k(\mf{x}, \mf{y})=\op{M}_{f(A)} ^k(\mf{x}, \mf{y'})$. The definition of the right multiplication is the same as that in (2B).

\vspace{.2cm}
\n (3)
Let $r=r(\mf{x},\mf{y} \xra{t,\es} \mf{x'},\mf{y})$.
This is similar to (2) and definition of the right multiplication breaks into 4 cases, depending on whether $t+2\in X$ and whether $t+1\in X$.

\vspace{.2cm}
\n (4)
Let $r=r(\mf{x},\mf{y} \xra{t,t+1} \mf{x'},\mf{y'})$.
Let us abbreviate $\mu'=\mu(\mf{x'}, \mf{y'})$, $p'=p(\mf{x'}, \mf{y'})$ and $s_i'=s_i(\mf{x'}, \mf{y'})$.
We have $\mu'=\mu-1, p'=p+2$ and the same decomposition $\alpha_{a(t)}=G_2(\alpha_{a(t)+}, \alpha_{a(t)-})$ as that in (2A).
The numbers $t+1$ and $t$ are in some increasing adjacent pairs for $\mf{x'}, \mf{y'}$ since $t,t+1 \in \mf{x}$ and $t+1,t+2 \in \mf{y'}$.
More precisely, we have $t+1=s_{a(t)+1}'$ and $t=s_{a(t)+2}'$.
\begin{equation*}
    \begin{aligned}
    \op{M}(\mf{x}, \mf{y})=
    & h^{\mu}  G_{2p+2}(\alpha_0, ..., \beta(s_{a(t)}), \alpha_{a(t)+}, \alpha_{a(t)-}, ..., \alpha_{p});\\
    \op{M}(\mf{x'}, \mf{y'})=
    & h^{\mu'}  G_{2p+5}(\alpha_0, ..., \beta(s_{a(t)}'), \alpha_{a(t)+}, \beta(t+1), [\es], \beta(t), \alpha_{a(t)-}, ..., \alpha_{p})\\
    =
    & h^{\mu'}  G_{2p+5}(\alpha_0, ..., \beta(s_{a(t)}'), \alpha_{a(t)+}, \beta(s_{a(t)+1}'), [\es], \beta(s_{a(t)+2}'), \alpha_{a(t)-}, ..., \alpha_{p}).
    \end{aligned}
    \end{equation*}
    Define $f: \cal{I}_{k-\mu}(\mf{x}, \mf{y}) \ra \cal{I}_{k-1-\mu'}(\mf{x'}, \mf{y'})$ for $A \in \cal{I}_{k-\mu}(\mf{x}, \mf{y})$ by
    $$f(A)=\{a~|~ a \in A, a \leq a(t)\} \sqcup \{a+2~|~ a \in A, a>a(t)\}.$$
    \noindent We have $\beta_{f(A)}(s_{a(t)+1}')=\beta_{f(A)}(s_{a(t)+2}')=[\es]$ since $a(t)+1, a(t)+2 \notin f(A)$. Hence $\op{M}_A ^k(\mf{x}, \mf{y})=\op{M}_{f(A)} ^{k-1}(\mf{x'}, \mf{y'})$.
Finally, if $m \in P(\op{M}_A ^k(\mf{x}, \mf{y}))\{\eta(A)\}$, then we define
    $$m \times r(\mf{x},\mf{y} \xra{t,t+1} \mf{x'},\mf{y'})=
    m \cdot e(\op{M}_{f(A)} ^{k-1}(\mf{x'}, \mf{y'}))\in P(\op{M}_{f(A)} ^{k-1}(\mf{x'}, \mf{y'}))\{\eta(f(A))\}.$$

\vspace{0.2cm}
\n This concludes the definition of the right $\drr$-multiplication.

\begin{rmk}
The definition above is compatible with the $q$-grading on $T$.
\end{rmk}

We need to show that the above definition gives $T$ a right DG $\drr$-module structure.
More precisely, we need to verify that
\be
\item $(m\times r_1)\times r_2 = (m\times r_1') \times r_2',$
if $r_1 \cdot r_2 = r_1' \cdot r_2'$ for $m \in T$ and generators $r_1, r_2, r_1', r_2' \in \drr$.

\item $d(m \times r)=d(m) \times r + m \times d(r),$
for $m \in T$ and $r \in \drr$.
\ee
We prove the second equation for $r=r(\mf{x},\mf{y} \xra{t,t+1} \mf{x'},\mf{y'})$ in the next subsection and leave others for the reader.
\subsubsection{$T$ as a right DG $(\drr)$-module}
\begin{lemma} The differential satisfies the Leibniz rule with respect to the right multiplication:
$$d(m \times r)=d(m) \times r + m \times d(r),$$
for $m \in P(\op{M}_A ^k(\mf{x}, \mf{y}))\{\eta(A)\}$ and $r=r(\mf{x},\mf{y} \xra{t,t+1} \mf{x'},\mf{y'})$.
\end{lemma}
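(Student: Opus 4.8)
The plan is to reduce to a single module generator and then compare both sides of the Leibniz rule term by term, organized by which ``pair index'' of $(\mf{x'},\mf{y'})$ gets filled. All the maps in sight are left $R$-module homomorphisms out of the cyclic module $P(\op{M}_A^k(\mf{x},\mf{y}))\{\eta(A)\}=R\cdot e(\op{M}_A^k(\mf{x},\mf{y}))\{\eta(A)\}$, so it is enough to test the identity on $m=e(\op{M}_A^k(\mf{x},\mf{y}))$, and there are no signs since the ground field is $\F$. Two facts keep the case analysis finite. First, for $r(\mf{x},\mf{y}\xra{t,t+1}\mf{x'},\mf{y'})$ to be an arrow of $\gnn$ one needs $t,t+1\notin\mf{x}$ and $t+1,t+2\notin\mf{y}$; consequently when an edge of type $(t,\es)$ or $(\es,t+1)$ acts on a summand coming from $(\mf{x},\mf{y})$ it falls under the sub-case analogous to (2A) (inserting one adjacent pair into an $\alpha$, hence right multiplication by a single generator of $R$), and when it acts on a summand coming from $(\mf{x'},\mf{y})$ or $(\mf{x},\mf{y'})$ it falls under the sub-case analogous to (2D) (the pair is already absorbed into neighbouring $\beta$'s, so the action is multiplication by an idempotent). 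Second, the case-(4) identity $\op{M}_B^{\ell}(\mf{x},\mf{y})=\op{M}_{f(B)}^{\ell-1}(\mf{x'},\mf{y'})$, with $f(B)=\{a\in B:a\le a(t)\}\sqcup\{a+2:a\in B,\ a>a(t)\}$, holds in every cohomological degree $\ell$ and for every index set $B$, because that computation depends only on $a(t)$.

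I would then expand $d(m\times r)$. By case (4), $m\times r=e(\op{M}_{f(A)}^{k-1}(\mf{x'},\mf{y'}))$, so $d(m\times r)=\sum_j d_{f(A)}^{f(A)\sqcup\{j\}}(m\times r)$, the sum over $j\in\{1,\dots,p'\}\setminus f(A)$ with both $\op{M}$-sequences nonzero, where $d_{f(A)}^{f(A)\sqcup\{j\}}$ is right multiplication by $r(\op{M}_{f(A)}^{k-1}(\mf{x'},\mf{y'})\xra{s'_j}\op{M}_{f(A)\sqcup\{j\}}^{k}(\mf{x'},\mf{y'}))$. I split the indices $j$ into the two ``new'' ones $a(t)+1$ (with $s'_{a(t)+1}=t+1$) and $a(t)+2$ (with $s'_{a(t)+2}=t$) of $(\mf{x'},\mf{y'})$ and the remaining ``old'' ones. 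The old indices are accounted for by $d(m)\times r$: from $d^{k}(m)=\sum_{B=A\sqcup\{i\}}r(\op{M}_A^k(\mf{x},\mf{y})\xra{s_i}\op{M}_B^{k+1}(\mf{x},\mf{y}))$ (using $e(\mf{u})r(\mf{u}\xra{s}\mf{v})=r(\mf{u}\xra{s}\mf{v})$ to absorb the spare idempotents), applying $\times r$ via case (4) and using the case-(4) identity in degree $k+1$ together with $s_i=s'_{f(i)}$ turns each summand into $r(\op{M}_{f(A)}^{k-1}(\mf{x'},\mf{y'})\xra{s'_{f(i)}}\op{M}_{f(B)}^{k}(\mf{x'},\mf{y'}))$ with $f(B)=f(A)\sqcup\{f(i)\}$, and $f(i)$ runs over exactly the old indices. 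The degenerate ``one sequence is $0$'' situations match on both sides because the sequences agree, so $d(m)\times r$ equals the old part of $d(m\times r)$.

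It remains to identify $m\times d(r)$ with the two new terms. Writing $r_1r_2+r_3r_4$ for the two summands of $d(r)$ from Definition~\ref{DG algebra}(B), with $r_1=r(\mf{x},\mf{y}\xra{t,\es}\mf{x'},\mf{y})$, $r_2=r(\mf{x'},\mf{y}\xra{\es,t+1}\mf{x'},\mf{y'})$, $r_3=r(\mf{x},\mf{y}\xra{\es,t+1}\mf{x},\mf{y'})$ and $r_4=r(\mf{x},\mf{y'}\xra{t,\es}\mf{x'},\mf{y'})$, we have $m\times d(r)=(m\times r_1)\times r_2+(m\times r_3)\times r_4$. In the first composite, $m\times r_1$ lies in the sub-case of (3) analogous to (2A) (as $t+1,t+2\notin\mf{y}$), so it is right multiplication by $r(\op{M}_A^{k}(\mf{x},\mf{y})\xra{t}\op{M}_A^{k}(\mf{x'},\mf{y}))$, which inserts $[t+1,t]$ into $\alpha_{a(t)}$; then $\times r_2$ lies in case (2D) (as $t,t+1\in\mf{x'}$) and acts by an idempotent, which is absorbed. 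Comparing the case-(2D) relabeling with $f$ shows that the target index set is $f(A)\sqcup\{a(t)+2\}$ and $\op{M}_A^{k}(\mf{x'},\mf{y})=\op{M}_{f(A)\sqcup\{a(t)+2\}}^{k}(\mf{x'},\mf{y'})$, so $(m\times r_1)\times r_2=d_{f(A)}^{f(A)\sqcup\{a(t)+2\}}(m\times r)$. The second composite is handled the same way --- $m\times r_3$ is case (2A) and $\times r_4$ is the sub-case of (3) analogous to (2D) --- and yields $d_{f(A)}^{f(A)\sqcup\{a(t)+1\}}(m\times r)$, carrying the generator label $t+1=s'_{a(t)+1}$. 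Degenerate vanishings match because the intermediate sequences coincide. Adding the three computations gives $d(m\times r)=d(m)\times r+m\times d(r)$.

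I expect the main obstacle to be precisely the bookkeeping in the last two paragraphs: keeping the relabeling maps of cases (2A), (2D), (3) and (4) straight simultaneously, checking at every step that the $\op{M}$-sequences and the $s$-labels on the $R$-generators line up, and checking that the $q$-shifts $\{\eta(\cdot)\}$ are compatible (this is automatic once one notes that $\eta$ depends only on the pair set and that $r$ carries $q$-degree $(n-1-2t)+(n-1-2(t+1))$). The identities left to the reader are handled the same way: for $r$ of type $(s,\es)$ or $(\es,t)$ one has $d(r)=0$ and the claim reduces to the compatibility of $f$ with the differential on $T$, while the multiplicativity $(m\times r_1)\times r_2=(m\times r_1')\times r_2'$ for relations $r_1r_2=r_1'r_2'$ of $\drr$ follows by the same unwinding together with the commutativity relation $r(\mf{u}\xra{a}\mf{v})\,r(\mf{v}\xra{b}\mf{w})=r(\mf{u}\xra{b}\mf{v'})\,r(\mf{v'}\xra{a}\mf{w})$ of $R$ from Definition~\ref{def rn}.
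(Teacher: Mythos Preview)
Your proposal is correct and follows essentially the same route as the paper: expand $d(m\times r)$ using the case-(4) identity, split the resulting differential terms over $(\mf{x'},\mf{y'})$ into the two ``new'' indices $a(t)+1,\,a(t)+2$ and the remaining ``old'' ones, match the old part with $d(m)\times r$ via the relabeling $f$ (the paper calls this $g$ in degree $k+1$), and match the two new terms with the two summands of $m\times d(r)$. Your write-up is in fact more explicit than the paper's in justifying why each factor of $d(r)$ lands in the sub-case (2A)/(3A) followed by (2D)/(3D), using $t,t+1\notin\mf{x}$ and $t+1,t+2\notin\mf{y}$; the paper simply records the resulting chain of equalities.
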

\proof
Consider the index maps in the definition of right multiplication with $r(\mf{x},\mf{y} \xra{t,t+1} \mf{x'},\mf{y'})$:
\begin{align*}
f: \cal{I}_{k-\mu}(\mf{x}, \mf{y})  \ra \cal{I}_{k-1-\mu'}(\mf{x'}, \mf{y'}),\\
g: \cal{I}_{k+1-\mu}(\mf{x}, \mf{y})  \ra \cal{I}_{k-\mu'}(\mf{x'}, \mf{y'}),
\end{align*}
\begin{gather*}
f(A)=\{a~|~ a \in A, a \leq a(t)\} \sqcup \{a+2~|~ a \in A, a>a(t)\}, \\ g(B)=\{b~|~ b \in B, b \leq a(t)\} \sqcup \{b+2~|~ b \in B, b>a(t)\},
\end{gather*}
for $A \in \cal{I}_{k-\mu}(\mf{x}, \mf{y})$ and $B \in \cal{I}_{k+1-\mu}(\mf{x}, \mf{y})$.

Consider $\overline{\cal{B'}}=\{B' \in \cal{I}_{k+1-\mu}(\mf{x}, \mf{y}) ~|~ B' \supset A\}$ and
$$\begin{array}{rl}
\cal{B}= & \{B \in \cal{I}_{k-\mu'}(\mf{x'}, \mf{y'}) ~|~ B=f(A) \sqcup \{i(f(A), B)\}\} \\
= &  \{B_1=f(A)\sqcup\{a(t)+1\}\} \sqcup \{B_2=f(A)\sqcup\{a(t)+2\}\} \\
  & \qquad \qquad  \sqcup \{B ~|~ B=g(B')~\mbox{for some}~ B' \in \overline{\cal{B'}}\} \\
= &  \{B_1\} \sqcup \{B_2\} \sqcup \overline{\cal{B}}.
\end{array}
$$
Then we have
\begin{align*}
   d\left(m \times r(\mf{x},\mf{y} \xra{t,t+1} \mf{x'},\mf{y'})\right)
= & d(m \cdot e(\op{M}_{f(A)} ^{k-1}(\mf{x'}, \mf{y'}))) \\
= & \sum \limits_{B \in \cal{B}} m \cdot r(\op{M}_{f(A)} ^{k-1}(\mf{x'}, \mf{y'}) \xra{s_{i(f(A),B)}}  \op{M}_{B} ^{k}(\mf{x'}, \mf{y'})) \\
= & \sum \limits_{B \in \overline{\cal{B}}} m \cdot r(\op{M}_{f(A)} ^{k-1}(\mf{x'}, \mf{y'}) \xra{s_{i(f(A),B)}}  \op{M}_{B} ^{k}(\mf{x'}, \mf{y'})) \\
  &  + m \cdot r(\op{M}_{f(A)} ^{k-1}(\mf{x'}, \mf{y'}) \xra{s_{a(t)+1}}  \op{M}_{B_1} ^{k}(\mf{x'}, \mf{y'})) \\
  &  + m \cdot r(\op{M}_{f(A)} ^{k-1}(\mf{x'}, \mf{y'}) \xra{s_{a(t)+2}}  \op{M}_{B_2} ^{k}(\mf{x'}, \mf{y'})) \\
= & \sum \limits_{B' \in \overline{\cal{B'}}} m \cdot r(\op{M}_{A} ^{k}(\mf{x}, \mf{y}) \xra{s_{i(A,B')}}  \op{M}_{B'} ^{k+1}(\mf{x}, \mf{y})) \cdot e(\op{M}_{g(B')} ^{k}(\mf{x'}, \mf{y'})) \\
 &  + m \cdot r(\op{M}_{f(A)} ^{k-1}(\mf{x'}, \mf{y'}) \xra{t+1}  \op{M}_{B_1} ^{k}(\mf{x'}, \mf{y'})) \\
  &  + m \cdot r(\op{M}_{f(A)} ^{k-1}(\mf{x'}, \mf{y'}) \xra{t}  \op{M}_{B_2} ^{k}(\mf{x'}, \mf{y'})) \\
  = & \sum \limits_{B' \in \overline{\cal{B'}}} m \cdot r(\op{M}_{A} ^{k}(\mf{x}, \mf{y}) \xra{s_{i(A,B')}}  \op{M}_{B'} ^{k+1}(\mf{x}, \mf{y})) \cdot e(\op{M}_{g(B')} ^{k}(\mf{x'}, \mf{y'})) \\
&  + m \cdot r(\op{M}_{A} ^{k}(\mf{x}, \mf{y}) \xra{t+1}  \op{M}_{A} ^{k}(\mf{x}, \mf{y'})) \cdot e(\op{M}_{B_1} ^{k}(\mf{x'}, \mf{y'})) \\
  &  + m \cdot r(\op{M}_{A} ^{k}(\mf{x}, \mf{y}) \xra{t}  \op{M}_{A} ^{k}(\mf{x'}, \mf{y})) \cdot e(\op{M}_{B_2} ^{k}(\mf{x'}, \mf{y'}))
                        \end{align*}
\begin{align*}
= & d(m) \times r(\mf{x},\mf{y} \xra{t,t+1} \mf{x'},\mf{y'})\\
  & + m \times r(\mf{x},\mf{y} \xra{\es,t+1} \mf{x},\mf{y'}) \times r(\mf{x},\mf{y'} \xra{t,\es} \mf{x},\mf{y'}) \\
 &  + m \times r(\mf{x},\mf{y} \xra{t,\es} \mf{x'},\mf{y}) \times  r(\mf{x'},\mf{y} \xra{\es,t+1} \mf{x'},\mf{y'}) \\
=  & d(m) \times r(\mf{x},\mf{y} \xra{t,t+1} \mf{x'},\mf{y'})
   + m \times d(r(\mf{x},\mf{y} \xra{t,t+1} \mf{x'},\mf{y'})). \qed
\end{align*}

It is easy to see that the left $R$-module structure and the right $\drr$-module structure on $T$ are compatible:
$$a \cdot (m \times r)=(a \cdot m) \times r,$$
for $a \in R, r \in \drr$ and $m \in T$.
Hence we finally have the $q$-graded DG $(R, \drr)$-bimodule $T$.

\subsection{The functor $\mathcal{M} : DGP(\drr) \rightarrow DGP(R)$}
We show that tensoring with $T$ over $\drr$ maps the projective DG $\drr$-module $P(\mf{x}, \mf{y})=(\drr)e(\mf{x}, \mf{y})$ to a projective DG $R$-module in $DGP(R)$.

\begin{lemma} \label{tensor}
The tensor product $T \otimes_{\drr} P(\mf{x}, \mf{y})$ is the DG $R$-module $$T(\mf{x}, \mf{y})=\left(\bigoplus\limits_k T^k(\mf{x}, \mf{y}), \sum\limits_k d^k(\mf{x}, \mf{y})\right)$$ in $DGP(R)$ for any $\mf{x}, \mf{y} \in V(\g)$.
\end{lemma}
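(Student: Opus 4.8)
\emph{Proof plan.}
The plan is to identify $T \otimes_{\drr} P(\mf{x}, \mf{y})$ with the subspace of $T$ consisting of the elements $m \times e(\mf{x}, \mf{y})$, and then to recognize that subspace, together with its induced structure, as the complex $T(\mf{x}, \mf{y})$. First I would invoke the standard isomorphism of graded left $R$-modules
$$T \otimes_{\drr} \big((\drr)e(\mf{x}, \mf{y})\big) \;\xra{\ \sim\ }\; T\times e(\mf{x}, \mf{y}), \qquad m \otimes \big(r\cdot e(\mf{x},\mf{y})\big) \;\longmapsto\; m \times \big(r\cdot e(\mf{x},\mf{y})\big),$$
valid for any (differential, $q$-graded) algebra and right module, with inverse $m' \longmapsto m'\otimes e(\mf{x},\mf{y})$. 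Part (1) of the definition of the right $\drr$-multiplication gives $m \times e(\mf{x}, \mf{y}) = \delta_{\mf{x},\mf{x'}}\,\delta_{\mf{y},\mf{y'}}\,m$ for $m \in T(\mf{x'},\mf{y'})$, so $T\times e(\mf{x},\mf{y}) = \bigoplus_k T^k(\mf{x},\mf{y})$ as a graded left $R$-module.

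Second, I would match the differentials. The differential on the tensor product is $d(m\otimes p) = d_T(m)\otimes p + m\otimes d_{\drr}(p)$, signs being irrelevant over $\F$. Since $d(e(\mf{x},\mf{y})) = 0$ in $\drr$, the differential inherited by $(\drr)e(\mf{x},\mf{y})$ satisfies $d_{\drr}(r\cdot e(\mf{x},\mf{y})) = d(r)\cdot e(\mf{x},\mf{y})$, and the Leibniz rule for the right action on $T$, namely $d_T(m\times a) = d_T(m)\times a + m\times d_{\drr}(a)$, shows that under the isomorphism above the tensor-product differential corresponds to the restriction of $d_T$ to $T\times e(\mf{x},\mf{y})$. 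Because $d_T$ was defined summand by summand via $d^k = \sum_{\mf{x},\mf{y}} d^k(\mf{x},\mf{y})$, that restriction is precisely $\sum_k d^k(\mf{x},\mf{y})$, and the identity $d^{k+1}(\mf{x},\mf{y})\circ d^k(\mf{x},\mf{y}) = 0$ already verified in the construction of $T$ confirms that $\big(\bigoplus_k T^k(\mf{x},\mf{y}),\ \sum_k d^k(\mf{x},\mf{y})\big)$ is a genuine DG $R$-module.

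Third, I would check that $T(\mf{x},\mf{y})$ lies in $DGP(R)$. Each $T^k(\mf{x},\mf{y})$ is a finite direct sum of modules $P(\op{M}_A^k(\mf{x},\mf{y}))\{\eta(A)\}$ with $\op{M}_A^k(\mf{x},\mf{y}) \in V(\g)\sqcup\{0\}$, hence a finitely generated projective $R$-module assembled from the indecomposables $P(\mf{z})$, $\mf{z}\in V(\g)$, and their $q$-shifts; moreover $k$ ranges only over $\mu(\mf{x},\mf{y}) \le k \le \mu(\mf{x},\mf{y}) + p(\mf{x},\mf{y})$, so $T(\mf{x},\mf{y})$ is a bounded complex of such modules. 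Any bounded complex of finitely generated projective $R$-modules is obtained from its terms, up to $[1]$- and $\{1\}$-shifts, by finitely many iterated mapping cones (using the stupid truncation), so $T(\mf{x},\mf{y})$ is an object of $DGP(R)$ by definition.

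The only step demanding real care is the second one: one must check that the differential $\sum_k d^k(\mf{x},\mf{y})$ put on $T(\mf{x},\mf{y})$ by hand coincides with the one induced by $-\otimes_{\drr} P(\mf{x},\mf{y})$, and this is exactly what the bimodule Leibniz rule for $T$, together with $d(e(\mf{x},\mf{y})) = 0$, delivers; the remaining steps are formal.
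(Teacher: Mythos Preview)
Your proposal is correct and follows essentially the same approach as the paper: both identify $T \otimes_{\drr} P(\mf{x},\mf{y})$ with the idempotent cut $T \times e(\mf{x},\mf{y}) = T(\mf{x},\mf{y})$, the paper phrasing this as a quotient computation and you via the standard isomorphism $M \otimes_A Ae \cong Me$. Your version is in fact more thorough than the paper's, which leaves the matching of differentials and the verification that $T(\mf{x},\mf{y}) \in DGP(R)$ implicit.
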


\proof
Since $T = \bigoplus\limits_{\mf{x'}, \mf{y'} \in V(\g)} T(\mf{x'}, \mf{y'})$ as left DG $R$-modules, where $T(\mf{x'}, \mf{y'}) \in DGP(R)$,
it follows that
$T \otimes P(\mf{x}, \mf{y})$ is the quotient of $\bigoplus\limits_{\mf{x'}, \mf{y'} \in V(\g)} (T(\mf{x'}, \mf{y'}) \times P(\mf{x}, \mf{y}))$ by the relations
$$\{(m \times r , e(\mf{x},\mf{y}))=(m , r \cdot e(\mf{x}, \mf{y}))~|~ m \in T(\mf{x'}, \mf{y'}), r \in \drr \}.$$
Since $T(\mf{x'}, \mf{y'}) \times P(\mf{x}, \mf{y})$ is spanned by $\{(m , r \cdot e(\mf{x}, \mf{y}))~|~ m \in T(\mf{x'}, \mf{y'}), r \cdot e(\mf{x}, \mf{y}) \neq 0\}$, $T \otimes P(\mf{x}, \mf{y})$ is spanned by
$$\{(m \times r , e(\mf{x},\mf{y}))~|~ m \in T(\mf{x'}, \mf{y'}), r \cdot e(\mf{x}, \mf{y}) \neq 0 \} \cong T(\mf{x}, \mf{y}).\qed$$

Since $DGP(\drr)$ is generated by the $P(\mf{x}, \mf{y})$'s, we obtain the functor $$\mathcal{M}: DGP(\drr) \xrightarrow{T \otimes_{\drr}-} DGP(R).$$
The following lemma implies that we have an induced functor $$\cal{M}|_{H^0} : H^0(DGP(\drr)) \rightarrow H^0(DGP(R)).$$

\begin{lemma}
The functor $\mathcal{M}$ preserves closed and exact morphisms.
\end{lemma}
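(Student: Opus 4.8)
The plan is to prove the stronger statement that $\cal{M} = T\ot_{\drr}-$ is a DG functor, that is, that for all objects $M, N$ of $DGP(\drr)$ the induced map of complexes $\op{Hom}_{DG(\drr)}(M,N)\ra\op{Hom}_{DG(R)}(\cal{M}(M),\cal{M}(N))$ is a chain map: $d(\cal{M}(f))=\cal{M}(d(f))$. Granting this, both assertions are immediate, since a chain map carries cycles to cycles and boundaries to boundaries: if $d(f)=0$ then $d(\cal{M}(f))=\cal{M}(0)=0$, and if $f=d(g)$ then $\cal{M}(f)=\cal{M}(d(g))=d(\cal{M}(g))$. In particular $\cal{M}$ then descends to a functor $\cal{M}|_{H^0}$.

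First I would pin down $\cal{M}$ on morphisms. Since $DGP(\drr)$ is generated under the shifts $[1],\{1\}$ and iterated mapping cones by the projectives $P(\mf{x},\mf{y})=(\drr)e(\mf{x},\mf{y})$, and $\op{Hom}_{DG(\drr)}(P(\mf{x},\mf{y}),P(\mf{x'},\mf{y'}))\cong e(\mf{x},\mf{y})(\drr)e(\mf{x'},\mf{y'})$ as complexes, with an element $r$ acting on the left module $P(\mf{x},\mf{y})$ by right multiplication, Lemma \ref{tensor} identifies $\cal{M}$ on these generating morphisms with the operation \emph{right multiplication by} $r$, sending $T(\mf{x},\mf{y})\ra T(\mf{x'},\mf{y'})$, $m\mapsto m\times r$, for $r\in e(\mf{x},\mf{y})(\drr)e(\mf{x'},\mf{y'})$. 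For a general morphism $f$ between iterated cones, $\cal{M}(f)=\op{id}_T\ot_{\drr}f$, which is well defined because $f$ is a map of left $\drr$-modules and $T$ is an $(R,\drr)$-bimodule, and which is a map of left $R$-modules landing in $DGP(R)$ by the description of objects already established.

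The heart of the matter is the identity $d(\cal{M}(f))=\cal{M}(d(f))$, which I would verify first on a generating morphism $f=(\cdot\,r):P(\mf{x},\mf{y})\ra P(\mf{x'},\mf{y'})$. Using the Leibniz rule in $\drr$ and the fact that the ground field is $\F$, one gets $d(f)(p)=d(p\cdot r)+d(p)\cdot r=d(p)\cdot r+p\cdot d(r)+d(p)\cdot r=p\cdot d(r)$, so $d(f)$ is right multiplication by $d(r)$; symmetrically, using the Leibniz rule $d(m\times r)=d(m)\times r+m\times d(r)$ for the right $\drr$-action on $T$ established above, $d(\cal{M}(f))(m)=d(m\times r)+d(m)\times r=m\times d(r)=\cal{M}(d(f))(m)$. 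For an arbitrary $f$ I would extend by additivity and by induction on the number of cone steps building $M$ and $N$, using that $\cal{M}=T\ot_{\drr}-$ is additive, commutes with $[1]$, and sends $C(g)=N\oplus M[1]$ to $C(\cal{M}(g))=\cal{M}(N)\oplus\cal{M}(M)[1]$; this splits $d(\cal{M}(f))$ into blocks governed by the internal differentials of the cones and by the cone maps, each reduced to the generating case. Equivalently, one may simply invoke the general fact that tensoring with a DG bimodule is a DG functor on all of $DG(\drr)$, which then restricts to $DGP(\drr)\ra DGP(R)$ by Lemma \ref{tensor} together with closure under shifts and cones. The only step requiring genuine care — and hence the main obstacle — is this last reduction: keeping the sign-free cancellations over $\F$ consistent and matching the mapping-cone differentials correctly across $\cal{M}$; it involves no new idea beyond the Leibniz rule already in hand.
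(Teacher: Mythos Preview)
Your proposal is correct and rests on the same key identity as the paper: $d(\cal{M}(f))=\cal{M}(d(f))$, i.e., $\cal{M}$ is a DG functor, from which preservation of closed and exact morphisms follows immediately.

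The paper, however, takes precisely your ``equivalent'' shortcut rather than your main route. It works directly with an arbitrary morphism $g\in\op{Hom}_{DGP(\drr)}(N,N')$, writes $\cal{M}(g)=\op{id}_T\otimes g$, and computes on a simple tensor $t\otimes n$ using the Leibniz rule for the differential on $T\otimes_{\drr}N$:
\[
d(\op{id}_T\otimes g)(t\otimes n)=d(t\otimes g(n))+(\op{id}_T\otimes g)(d(t)\otimes n+t\otimes d(n))=t\otimes d(g(n))+t\otimes g(d(n))=(\op{id}_T\otimes d(g))(t\otimes n),
\]
the cross terms $d(t)\otimes g(n)$ cancelling in characteristic two. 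This single computation handles all $N,N'$ at once, so your reduction to generating morphisms $(\cdot\, r)$ between projectives and the subsequent induction on cone structure are unnecessary detours: the tensor-product Leibniz rule already encodes the bimodule Leibniz rule $d(m\times r)=d(m)\times r+m\times d(r)$ that you invoke in the base case, and applies uniformly without decomposing the source and target. Your approach buys a more explicit picture on generators, but at the cost of an extra bookkeeping step that the paper avoids.
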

\proof
For any $g \in \cal{H}om_{DGP(\drr)}(N, N')$, we have
$$\cal{M}(g) = id_T \otimes g  \in \cal{H}om_{DGP(R)}(T \otimes N, T\otimes N').$$
It suffices to prove $d(id_T \otimes g)=id_T \otimes d(g)$.
For any $t \in T, n \in N$,
\begin{align*}
(d(id_T \otimes g)) (t\otimes n) & = d\circ (id_T \otimes g) (t\otimes n) + (id_T \otimes g) \circ d(t\otimes n) \\
& = d(t \otimes g(n))+ (id_T \otimes g)(d(t)\otimes n+t\otimes d(n)) \\
& = d(t) \otimes g(n) + t \otimes d(g(n)) + d(t)\otimes g(n)+t\otimes g(d(n)) \\
& =  t \otimes d(g(n))+t\otimes g(d(n)) \\
& = (id_T \otimes d(g))(t \otimes n). \qed
\end{align*}

Note that $\cal{M}|_{H^0}$ is an exact functor since $\cal{M}$ also preserves mapping cones.
Then $\cal{M}|_{H^0}$ induces a $\Z [q^{\pm1}]$-linear map
$
K_0(\cal{M}|_{H^0}):  K_0({\rr}) \ra K_0(R)
$
under the isomorphisms
$$K_0(H^0(DGP(\drr))) \cong K_0({\rr}), \quad K_0(H^0(DGP(R))) \cong K_0(R).$$

\begin{proof} [Proof of Theorem \ref{Clifford}]
The algebra structure on $K_0(R)$ was proved in Proposition \ref{K_0}.
In order to prove that $\cal{M}_n|_{H^0}: H^0(DGP(\drnn)) \ra H^0(DGP(\rn))$ categorifies the multiplication $\op{m}_n$,
we compute $K_0(\cal{M}|_{H^0})$ using $\{[P(\mf{x}, \mf{y})]\}$ as a basis for $K_0(\rr)$. By Remark \ref{class} and Lemma \ref{tensor} we have
\begin{align*}
K_0(\cal{M}|_{H^0})(\mf{x}, \mf{y})= & K_0(\cal{M}|_{H^0})([P(\mf{x}, \mf{y})]) \\
= & [T \otimes P(\mf{x}, \mf{y})] \\
= & [T(\mf{x}, \mf{y})] \\
= & \sum \limits_{k}[T^k(\mf{x}, \mf{y})]h^k|_{h=-1} \\
= & \sum \limits_{k}\op{M}^k(\mf{x}, \mf{y})h^k|_{h=-1} \\
= & \op{m}(\mf{x}, \mf{y}).
\end{align*}
Hence, we finish the proof of Theorem \ref{Clifford}.
\end{proof}

\section{A categorification of $\mf{U}_n$ via a subcategory of $H^0(DGP(\rn))$}

\subsection{$\mf{U}_n$ as a subalgebra of $K_0(\rn)$}
We include $\mf{U}_n$ into $K_0(\rn)$ as a subalgebra for $n>0$.
\begin{lemma} There is an inclusion of $\Z[q^{\pm1}]$-algebras:
$$
\begin{array}{ccccc}
\imath_n : & \mf{U}_n & \ra & K_0(\rn) \\
& 1 & \mapsto & 1 \\
& E & \mapsto & \sum \limits_{\tiny{\begin{array}{c}0\leq i \leq n\\i \hspace{.1cm} \mbox{\em even}\end{array}}} X_i \\
& F & \mapsto & \sum \limits_{\tiny{\begin{array}{c}0\leq i \leq n\\i \hspace{.1cm} \mbox{\em odd}\end{array}}} X_i
\end{array}
$$
\end{lemma}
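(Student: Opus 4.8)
The plan is to check that $\imath_n$ respects the defining relations of $\mf{U}_n$, so that it is a well-defined homomorphism of $\zq$-algebras, and then to verify that it is injective; both parts are direct computations inside the presentation of $K_0(\rn)$ supplied by Theorem \ref{Clifford}.

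\textbf{Well-definedness.} Write $\bar E=\sum_{i\text{ even}}X_i$ and $\bar F=\sum_{i\text{ odd}}X_i$, with both sums ranging over $0\le i\le n$. Expanding $\bar E^{2}=\sum_{i\text{ even}}X_i^{2}+\sum_{i\ne j\text{ even}}X_iX_j$, the first sum is $0$ since $X_i^{2}=0$, while in the second sum any two distinct even integers differ by more than $1$, so the $(i,j)$ and $(j,i)$ terms cancel by $X_iX_j=-X_jX_i$; hence $\bar E^{2}=0$, and symmetrically $\bar F^{2}=0$. For the mixed relation, $\bar E\bar F+\bar F\bar E=\sum_{i\text{ even},\,j\text{ odd}}(X_iX_j+X_jX_i)$; a summand vanishes whenever $|i-j|>1$, so since $i$ is even and $j$ is odd only the terms with $|i-j|=1$ remain, that is, those with $\{i,j\}=\{s,s+1\}$ for a unique $s\in\{0,\dots,n-1\}$, each contributing $q^{2s+1-n}$. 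Summing over $s$ gives $\bar E\bar F+\bar F\bar E=\sum_{s=0}^{n-1}q^{2s+1-n}=q^{n-1}+\cdots+q^{1-n}$, which is exactly the right-hand side in Definition \ref{U_n}. As $\mf{U}_n$ is presented by precisely these relations, $1\mapsto1$, $E\mapsto\bar E$, $F\mapsto\bar F$ then extend to an algebra homomorphism $\imath_n$.

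\textbf{Injectivity.} The relations of $\mf{U}_n$ reduce every word in $E,F$ to a $\zq$-combination of $1,E,F,EF$ (for instance $FE=(q^{n-1}+\cdots+q^{1-n})-EF$, whence $EFE=(q^{n-1}+\cdots+q^{1-n})E$ and $FEF=(q^{n-1}+\cdots+q^{1-n})F$), so $\{1,E,F,EF\}$ spans $\mf{U}_n$ over $\zq$; it therefore suffices to prove that $1,\imath_n(E),\imath_n(F),\imath_n(EF)$ are $\zq$-linearly independent in $K_0(\rn)=\zq\lan V(\gn)\ran$. I would do this by reading off supports in the vertex basis: $1=[\es]$; $\imath_n(E)$ and $\imath_n(F)$ are supported on length-one vertices $[i]$; and, computing each product $X_iX_j$ of two length-one generators by specializing the higher multiplication $\op{M}$ at $h=-1$ as in Definition \ref{example}, $\imath_n(EF)$ is supported on $[\es]$ together with length-two vertices, its coefficient of $[1,0]$ being $-1$ — the only contribution coming from $X_0X_1=q^{1-n}[\es]+h[1,0]$ evaluated at $h=-1$. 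Then any relation $a[\es]+b\,\imath_n(E)+c\,\imath_n(F)+e\,\imath_n(EF)=0$ with $a,b,c,e\in\zq$ forces $b=0$ by comparing coefficients of $[0]$, then $c=0$ by comparing coefficients of $[1]$ (both $[0]$ and $[1]$ lie in $V(\gn)$ since $n>0$), then $e=0$ by comparing coefficients of $[1,0]$, and finally $a=0$; hence $\ker\imath_n=0$.

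The bilinear expansions are routine. I expect the only points requiring genuine care to be the combinatorial bookkeeping in the computation of $\bar E\bar F+\bar F\bar E$ — verifying that each adjacent pair $\{s,s+1\}$ with $0\le s\le n-1$ contributes exactly once and with power $q^{2s+1-n}$ — and the monomial reduction showing that $\{1,E,F,EF\}$ spans $\mf{U}_n$, which is what lets injectivity be settled by a rank-four linear-independence check in $K_0(\rn)$.
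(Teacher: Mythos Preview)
Your well-definedness argument is essentially identical to the paper's proof: both expand $\bar E^2$, $\bar F^2$, and $\bar E\bar F+\bar F\bar E$ and reduce using the Clifford relations of Proposition~\ref{K_0}. The paper in fact stops there---its proof only verifies that $\imath_n$ respects the relations and does not address injectivity at all, despite the word ``inclusion'' in the statement.

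Your injectivity argument is therefore a genuine addition. It is correct: the reduction of every word to a $\zq$-combination of $1,E,F,EF$ is straightforward from $E^2=F^2=0$ and the anticommutator relation, and the linear-independence check in the vertex basis of $K_0(\rn)$ is clean and accurate (in particular your computation that the $[1,0]$-coefficient of $\imath_n(EF)$ equals $-1$, coming solely from $X_0X_1$, is right). This makes your proof strictly more complete than the one in the paper; the cost is a few extra lines of elementary bookkeeping, which seems well worth it given that injectivity is part of the claim.
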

\proof
It suffices to show that $\imath_n$ maps the relations of $\mf{U}_n$ in Definition \ref{U_n} to the relations of $K_0(\rn)$
in Proposition \ref{K_0}:
$$\left(\imath(E)\right)^2=(\sum \limits_{i \hspace{.1cm} \mbox{even}} X_i)^2=\sum \limits_{i \hspace{.1cm} \mbox{even}} X_i^2+\sum \limits_{i<j \hspace{.1cm} \mbox{even}}(X_iX_j+X_jX_i)=0=\imath(E^2).$$
Similarly $(\imath(F))^2=0=\imath(F^2)$. We also have
\begin{equation*}
\begin{aligned}
\imath(E)\imath(F)+\imath(F)\imath(E) &= \sum \limits_{\tiny{\begin{array}{c}i \hspace{.1cm} \mbox{even}\\j \hspace{.1cm} \mbox{odd}\end{array}}} X_iX_j + \sum \limits_{\tiny{\begin{array}{c}i \hspace{.1cm} \mbox{even}\\j \hspace{.1cm} \mbox{odd}\end{array}}} X_jX_i \\
&=\sum \limits_{i =0}^{n-1} (X_iX_{i+1}+X_{i+1}X_i)+\sum \limits_{i<j-1}(X_iX_j+X_jX_i) \\
&=\sum \limits_{i =0}^{n-1} q^{2i+1-n}+0 \\
&= \imath(EF+FE). \qed
\end{aligned}
\end{equation*}

Similarly, we have an inclusion $\imath_{n,n}=\imath_n \otimes \imath_n: \mf{U}_n \otimes \mf{U}_n \ra K_0(\rnn)$.

Hence $\mf{U}_n$ and $\mf{U}_n \otimes \mf{U}_n$ can be viewed as subalgebras of $K_0(\rn)$ and $K_0(\rnn)$, respectively.
The restriction of the multiplication map $\op{m}: K_0(\rnn) \ra K_0(\rn)$ to $\mf{U}_n \otimes \mf{U}_n \ra \mf{U}_n$ gives the algebra structure on $\mf{U}_n$. We will lift subalgebras to subcategories in the next section.

\subsection{A subcategory of $H^0(DGP(\rn))$ categorifying $\mf{U}_n$}
Since $K_0(\rn)$ is isomorphic to the Grothendieck group of $H^0(DGP(\rn))$, we can formally construct $\mathcal{U}_n$ as a triangulated full subcategory of $H^0(DGP(\rn))$ whose Grothendieck group is the subalgebra $\mf{U}_n$.
We define a bifunctor
$$\chi_n : H^0(DGP(\rn)) \times H^0(DGP(\rn)) \ra H^0(DGP(\rnn)) \ra H^0(DGP(\drnn)),$$
where the first map is given by tensoring two DG $\rn$-modules over $\F$ and the second map is an inverse of the equivalence in Lemma \ref{equivalence} which maps $P'(\mf{x},\mf{y})$ to $P(\mf{x},\mf{y})$ for any pair $\mf{x},\mf{y} \in \gn$.
Let $\rho_n=\mathcal{M}_n|_{H^0} \circ \chi_n:$
$$H^0(DGP(\rn)) \times H^0(DGP(\rn)) \ra H^0(DGP(\drnn)) \ra H^0(DGP(\rn)).$$
Notice that $\rho_n (M, P([\es])))=\rho_n (P([\es]), M)=M$, for any $M \in H^0(DGP(\rn))$.

To define $\mathcal{U}_n$, we first lift
$1$ to $P([\es])$,
$q$ to $P([\es])\{1\}$, and
$q^{-1}$ to $P([\es])\{-1\}$.
Letters $E$ and $F$ are lifted to
$$\mathcal{E}=\bigoplus_{i \hspace{.1cm} \mbox{even}} P([i]) \in H^0(DGP(\rn)),$$
$$\mathcal{F}=\bigoplus_{i \hspace{.1cm} \mbox{odd}} P([i]) \in H^0(DGP(\rn)).$$
Then for the multiplication $A_1A_2$ of $A_1, A_2 \in \{1, q, q^{-1}, E, F\}$, we lift it to
$$\mathcal{A}_1\mathcal{A}_2=\rho_n (\mathcal{A}_1, \mathcal{A}_2),$$
where $\mathcal{A}_i$ is the lifting of $A_i$ defined above for $i=1, 2$.
For multiplication of $3$ letters $A_1, A_2, A_3 \in \{1, q, q^{-1}, E, F\}$, we have different lifting of multiplication for different orders.
For instance, we lift $(A_1A_2)A_3$ to
$$\rho_n(\rho_n (\mathcal{A}_1, \mathcal{A}_2), \mathcal{A}_3),$$
and $A_1(A_2A_3)$ to
$$\rho_n(\mathcal{A}_1, \rho_n(\mathcal{A}_2, \mathcal{A}_3)).$$
For multiplication of more letters, the definition of lifting is similar.

Then we define $\mathcal{U}_n$ as the smallest triangulated full subcategory of $H^0(DGP(\rn))$ containing the lifting of multiplication of all finitely many letters in $\{1, q, q^{-1}, E, F\}$ for all possible orders.

\begin{rmk} \label{E2=0}
An equation in $\mf{U}_n$ may not be lifted to an isomorphism in $\mathcal{U}_n$.
For example, the equation $E^2 = 0 \in \mf{U}_n$ is lifted to
\begin{equation*}
\begin{aligned}
\mathcal{E}\mathcal{E}=\bigoplus_{i,j \hspace{.1cm} \mbox{even}} \mathcal{M}_n(P([i], [j])) \in H^0(DGP(\rn)).
\end{aligned}
\end{equation*}
As a cochain complex, $\mathcal{E}\mathcal{E}=(\mathcal{E}\mathcal{E})^{-1}\oplus(\mathcal{E}\mathcal{E})^{0}$ has zero differential, where
$$(\mathcal{E}\mathcal{E})^{-1}=(\mathcal{E}\mathcal{E})^{0}=\bigoplus_{\tiny{\begin{array}{c}i,j \hspace{.1cm} \mbox{even}\\i>j\end{array}}} P([i, j]).$$
It is not isomorphic to $0 \in H^0(DGP(\rn))$.
\end{rmk}

Next we define $\mathcal{U}_{n,n}$ as the smallest triangulated full subcategory of $H^0(DGP(\drnn))$ containing $\{\chi_n(\mathcal{X}, \mathcal{Y})~|~ \mathcal{X}, \mathcal{Y} \in \mathcal{U}_n\}$.

\begin{proof} [Proof of Theorem \ref{main}]
Since $\mf{U}_n$ is generated by $q, q^{-1}, E$ and $F$ as an algebra and $\mathcal{U}_n$ is the smallest triangulated full subcategory containing the lifting of multiplication of $q, q^{-1}, E$ and $F$, it follows that $K_0(\mathcal{U}_n) = \mf{U}_n$.
Similarly, we have $K_0(\mathcal{U}_{n,n})=\mf{U}_n \otimes \mf{U}_n$.

Since the exact functor $\mathcal{M}_n|_{H^0} : H^0(DGP(\drnn)) \rightarrow H^0(DGP(\rn))$ maps $\mathcal{U}_{n,n}$ into $\mathcal{U}_{n}$, let $\mathcal{F}_n : \mathcal{U}_{n,n} \rightarrow \mathcal{U}_{n}$ be the restriction.
Then $K_0(\mathcal{F}_n): K_0(\mathcal{U}_{n,n}) \ra K_0(\mathcal{U}_n)$ agrees with the multiplication $\op{m}_n:\mf{U}_n \otimes \mf{U}_n \ra \mf{U}_n$.
Hence we proved Theorem \ref{main}.
\end{proof}


\begin{thebibliography}{12/10}

\bibitem{ASS}
I.\ Assem, D.\ Simson and A.\ Skowronski, \textit{Elements of the representation theory of associative algebras. Vol. 1}, London Mathematical Society Student Texts, vol. 65, Cambridge University Press, Cambridge, 2006.

\bibitem{BFK}
J.\ Bernstein,\ I.\ Frenkel and M.\ Khovanov, \textit{A
  categorification of the Temperley-Lieb algebra and Schur quotients of
  $U(\mathfrak{sl}_2)$ via projective and Zuckerman functors}, Selecta Math. (N.S.) {\bf 5} (1999), 199--241.

\bibitem{BL}
J.\ Bernstein and V.\ Lunts, \textit{Equivariant sheaves and functors}, Lecture Notes in Math.\ vol. 1578, Springer,\ 1994.

\bibitem{CLa}
S.\ Cautis and A.\ Lauda
\textit{Implicit struture in 2-representations of quantum groups},\ 2011,\
arXiv:1111.1431.


\bibitem{CR}
J.\ Chuang and R.\ Rouquier, \textit{Derived equivalences for symmetric groups and $\mathfrak{sl}_2$ categorification}, Ann.\ of Math.\ {\bf 167} (2008), 245-298.

\bibitem{DM}
C.\ Douglas and C.\ Manolescu,
\textit{On the algebra of cornered Floer homology},\ 2011,\
arXiv:1105.0113.

\bibitem{FKS}
I.\ Frenkel, M.\ Khovanov and C.\ Stroppel, \textit{A categorification
  of finite-dimensional irreducible representations of quantum sl(2) and their
  tensor products}, Selecta Math. (N.S.) {\bf 12} (2006), 379--431.

\bibitem{Gi}
E.\ Giroux, {\it Structures de contact sur les vari\'et\'es fibr\'ees en cercles audessus d'une surface}, Comment.\ Math.\ Helv.\ {\bf 76} (2001), 218--262.

\bibitem{Honda1}
K.\ Honda, \textit{Contact structures, Heegaard Floer homology and triangulated categories}, in preparation

\bibitem{Honda2} K.\ Honda, \textit{On the classification of tight contact
structures I}, Geom. Topol. {\bf 4} (2000), 309--368.

\bibitem{Huang}
Y.\ Huang,
\textit{Bypass attachments and homotopy classes of 2-plane fields in contact topology},\ 2011,\
arXiv:1105.2348.

\bibitem{Ju}
A.\ Juh\'asz, \textit{Holomorphic discs and sutured manifolds}, Algebr. Geom. Topol.\ {\bf 6} (2006), 1429--1457 (electronic).

\bibitem{Kac}
V.\ G.\ Kac, \textit{Infinite dimensional Lie algebras}, Cambridge University Press, Cambridge, 1985.

\bibitem{KS}
L.\ H.\ Kauffman and H. Saleur \textit{Free Fermions and the Alexander-Conway polynomial}, Comm.\ Math.\ Phys.\ {\bf 141} (1991), 293--327.

\bibitem{Ke}
B.\ Keller, \textit{On differential graded categories}, International Congress of Mathematicians, vol. II, Eur.\ Math.\ Soc.\, Z\"{u}rich, (2006), 151--190.

\bibitem{Kh1}
M.\ Khovanov, \textit{A categorification of the Jones polynomial}, Duke Math J.\ {\bf 101} (1999), 359--426.

\bibitem{Kh2}
M.\ Khovanov,
\textit{How to categorify one-half of quantum $gl(1|2)$},\ 2011,\ arXiv:1007.3517.

\bibitem{Kh3}
M.\ Khovanov,
\textit{Heisenberg algebra and a graphical calculus},\ 2010,\ arXiv:1009.3295.

\bibitem{KL1}
M.\ Khovanov and A.\ Lauda,
\textit{A diagrammatic approach to categorification of quantum groups I},
Represent.\ Theory, {\bf 13} (2009), 309--347.

\bibitem{KL3}
M.\ Khovanov and A.\ Lauda,
\textit{A diagrammatic approach to categorification of quantum groups III},
Quantum Topology {\bf 1} (2010), 1--92.

\bibitem{KL2}
M.\ Khovanov and A.\ Lauda,
\textit{A diagrammatic approach to categorification of quantum groups II},
Trans.\ Amer.\ Math.\ Soc.\, {\bf 363} (2011), 2685--2700.

\bibitem{La}
A.\ D.\ Lauda, \textit{A categorification of quantum sl(2)}, Adv.\ Math.\ {\bf 225} (2010), 3327--3424.

\bibitem{LOT}
R.\ Lipshitz, P.\ Ozsv\'ath and D.\ Thurston, \textit{Bordered Heegaard Floer homology: invariance and pairing},\ 2008,\ arXiv:0810.0687.


\bibitem{Ma}
D.\ Mathews, \textit{Chord diagrams, contact-topological quantum field theory and contact categories}, Algebr. Geom. Topol.\ {\bf 10} (2010), 2091--2189.

\bibitem{OS}
P.\ Ozsv\'ath and Z.\ Szab\'o, \textit{Holomorphic disks and knot
invariants}, Adv.\ Math.\ {\bf 186} (2004), 58--116.

\bibitem{Ra}
J.\ A.\ Rasmussen, \textit{Floer homology and knot complements}, Ph.D. thesis, Harvard University, 2003.

\bibitem{RS}
L.\ Rozansky and H.\ Saleur, \textit{Quantum field theory for the multi-variable Alexander-Conway polynomial}, Nucl.\ Phys.\ {\bf B} 376 (1992), 461--509.

\bibitem{RT}
N.\ Reshetikhin and V.\ Turaev, \textit{Ribbon graphs and their invariants derived from quantum groups}, Comm.\ Math.\ Phys.\ {\bf 127} (1990), 1--26.

\bibitem{Rou}
R.\ Rouquier,
\textit{2-Kac-Moody algebras},\ 2008,\
arXiv:0812.5023.

\bibitem{Sa}
A.\ Sartori, \textit{Categorification of $\mathfrak{gl}(1|1)$-representations}, in preparation.

\bibitem{Str}
C.\ Stroppel, \textit{Categorification of the Temperley-Lieb category,
  tangles, and cobordisms via projective functors}, Duke Math.\ J.\ {\bf 126} (2005), 547--596.

\bibitem{Web1}
B.\ Webster,
\textit{Knot invariants and higher representation theory I: diagrammatic
  and geometric categorification of tensor products},\ 2010,\
arXiv:1001.2020.

\bibitem{Web2}
B.\ Webster,
\textit{Knot invariants and higher representation theory II: the
  categorification of quantum knot invariants},\ 2010,\
arXiv:1005.4559.



\end{thebibliography}
\end{document}